\pgfplotsset{compat=1.14}
\newcommand{\labeltext}[2]{%
	\@bsphack
	\def\@currentlabel{#1}{\label{#2}}%
	\@esphack
}
\def\step#1#2#3{\par \noindent{{\vskip 5pt \bf Step~\labeltext{#1}{#3}#1. }{\bf #2. }}}
\long\def\unmarkedfootnote#1{{\long\def\@makefntext##1{##1}\footnotetext{#1}}}
\theoremstyle{plain}
\newtheorem{thm}{Theorem}[section]
\newtheorem{lemma}[thm]{Lemma}
\newtoks\prt
\newtheorem{proclaim}[thm]{\the\prt}
\theoremstyle{definition}
\def\eqn#1$$#2$${\begin{equation}\label#1#2\end{equation}}
\numberwithin{equation}{section}
\newcommand{\ignore}[1]{}
\def\dist{\operatorname{dist}}
\def\epsilon{\varepsilon}
\def\en{\mathbb N}
\def\er{\mathbb R}
\def\mir1{\mathcal L_1}
\def\mir2{\mathcal L_{2}}
\def\oint{-\hskip -13pt \int}
\def\phi{\varphi}
\def\R{\widetilde{R}}
\def\rn{\mathbb R^n}
\def\x{\widetilde{x}}
\def\zet{\mathbb Z}
\def\R{\mathcal{R}}
\def\S{\mathbb{S}^1}
\def\r{\mathbf{r}}
\newtoks\by
\newtoks\paper
\newtoks\book
\newtoks\jour
\newtoks\yr
\newtoks\pages
\newtoks\vol
\newtoks\publ
\def\ota{{\hbox\vol{???}}}
\def\cLear{\by=\ota\paper=\ota\book=\ota\jour=\ota\yr=\ota
\pages=\ota\vol=\ota\publ=\ota}
\def\endpaper{\the\by, {\the\paper},
\textit{\the\jour} \textbf{\the\vol} (\the\yr), \the\pages.\cLear}
\def\endbook{\the\by, \textit{\the\book}, \the\publ.\cLear}
\def\endprep{\the\by, \textit{\the\paper}, \the\jour.\cLear}
\def\endyearprep{\the\by, \textit{\the\paper}, \the\jour, (\the\yr).\cLear}
\def\name#1#2{#2 #1}
\def\nom{ \rm no. }
\title[Approximation of planar Sobolev $W^{2,1}$ homeomorphisms]{Approximation of planar Sobolev $W^{2,1}$ homeomorphisms by Piecewise Quadratic Homeomorphisms and Diffeomorphisms}
\author{Daniel Campbell and Stanislav Hencl }
\thanks{The first author was supported by the grant GA\v{C}R 20-19018Y. The second author was supported by the grant GA\v{C}R P201/18-07996S}
\address{Department of Mathematics, University of Hradec Kr\'alov\'e, Rokitansk\'eho 62, 500 03 Hradec Kr\'alov\'e, Czech Republic}
\address{Faculty of Economics, University of South Bohemia, Studentsk\' a 13, Cesk\' e Budejovice, Czech Republic}
\email{campbell@uhk.cz}
\address{Department of Mathematical Analysis, Charles University, So\-ko\-lovsk\'a 83, 186~00 Prague 8, Czech Republic}
\email{\tt hencl@karlin.mff.cuni.cz}
\date{\today}
\begin{document}

\begin{abstract}
Given a Sobolev homeomorphism $f\in W^{2,1}$ in the plane we find a piecewise quadratic homeomorphism that approximates it up to a set of $\epsilon$ measure. We show that this piecewise quadratic map can be approximated by diffeomorphisms in the $W^{2,1}$ norm on this set.  
\end{abstract}

\maketitle

\section{Introduction}

In this paper we address the issue approximation of Sobolev homeomorphisms with diffeomorphisms. Let us briefly explain the motivation for this problem that comes from Nonlinear Elasticity. %In the setting of nonlinear elasticity   
	%Ball~\cite{B2} and \cite{B4}. 
	Let $\Omega \subset \rn$ be a domain which models a body made out of homogeneous elastic material, and let $f : \Omega \to \rn$ be a mapping modeling the deformation of this body with prescribed boundary values. In the theory of nonlinear elasticity pioneered by Ball and Ciarlet, see e.g. \cite{B, BB, Ci}, we study the existence and regularity properties of minimizers of the energy functionals 
	\begin{align*}
	I(f) = \int_{\Omega} W(Df) \; dx,
	\end{align*}  
	where $W : \er^{n \times n} \to \er$ is the so-called stored-energy functional, and $Df$ is the differential matrix of the mapping $f$. The physically relevant assumptions on the model include:
	\begin{itemize}
		%\item[(W1)] $f$ is a homeomorphism, which corresponds to the non-impenetrability of the material.
		\item[(W1)] $W(A) \to +\infty$ as $\det A \to 0$, i.e. mapping does not compress too much,
		\item[(W2)] $W(A) = +\infty$ if $\det A \le 0$, which guarantees that the orientation is preserved.
	\end{itemize} 
	In particular, any admissible deformation $f$ satisfies 
	\begin{align*}
	J_{f}(x) := \det Df(x) > 0 \quad \text{for a.e. $x \in \Omega$.}
	\end{align*} 
	With the help of some growth assumptions on $W$ we can prove that a mapping with finite energy is continuous and one-to-one, which corresponds to the non-impenetrability of the matter. Hence it is natural to study Sobolev homeomorphisms with $J_f>0$ a.e. that minimize the energy.
	
	As pointed out by Ball in~\cite{B2, Ball2} (who ascribes the question to Evans~\cite{E}), an important issue toward understanding the regularity of the minimizers in this setting would be to show the existence of minimizing sequences given by piecewise affine homeomorphisms or by diffeomorphisms. This question is called the Ball-Evans approximation problem and asks as a first step to approximate any homeomorphism $u\in W^{1,p}(\Omega;\er^n)$, $p\in[1,+\infty)$ by piecewise affine homeomorphisms or by diffeomorphisms in $W^{1,p}$ norm.
	The motivation is that regularity is typically proven by testing the weak equation or the variation formulation by the solution itself; but without some a priori regularity of the solution, the integrals are not finite. Thus we need to test the equation 
	with a smooth test mapping of finite energy which is close to the given homeomorphism instead. %Here we see the necessity for the approximations to be homeomorphisms whose image is the same as that of the approximated map, otherwise this sequence would have nothing in common with our original problem. 
	Besides Nonlinear Elasticity, an approximation result of homeomorphisms with diffeomorphisms would be a very useful tool as it allows a number of proofs to be significantly simplified. 
	Let us note that finding diffeomorphisms near a given homeomorphism is not an easy task, as the usual approximation techniques like mollification or Lipschitz extension using the maximal operator destroy, in general, injectivity.

	Let us describe the known results about the Ball-Evans approximation problem.  
	The problems of approximation by diffeomorphisms or piecewise affine planar homeomorphisms are in fact equivalent by the result of Mora-Corral and Pratelli \cite{MP} (see also \cite{IO2}).
	The first positive results on approximation of planar homeomorphisms smooth outside a point are by Mora-Corral~\cite{M}.
	% and by Bellido and Mora-Corral~\cite{BMC} on approximation in H\" older continuous maps.  
	The celebrated breakthrough result in the area which stimulated much interest in the subject was given by 
	Iwaniec, Kovalev and Onninen in~\cite{IKO}, where they found diffeomorphic approximations to any homeomorphism $f\in W^{1,p}(\Omega,\er^2)$, for any $1<p<\infty$ in the $W^{1,p}$ norm. The remaining missing case $p=1$ in the plane has been solved by Hencl and Pratelli in~\cite{HP} by a different method. This method was extended to cover other function spaces like Orlicz-Sobolev spaces (see Campbell \cite{Ca}), BV space (see Pratelli, Radici \cite{PR}) or $WX$ for nice rearrangement invariant Banach function space $X$ 
	(see Campbell, Greco, Schiattarella, Soudsk\'y \cite{CS}). It is possible to approximate also $f^{-1}$ in the Sobolev norm for $p=1$ 
	(see Pratelli \cite{BiP}) or for $1\leq p<\infty$ under the additional assumption that the mapping is bi-Lipschitz (see Daneri and Pratelli in \cite{DP}). Moreover, it is possible to characterize all strong limits of Sobolev diffeomorphisms (not only homeomorphisms) as shown by Iwaniec and Onninen \cite{IO} for $p\geq 2$ and by De Philippis and Pratelli \cite{DPP} for $1\leq p<2$. The higher dimensional case $n\geq 3$ is widely open and essentially nothing is known for $n=3$. However, for $n\geq 4$ and $1\leq p<[\frac{n}{2}]$ there exists a Sobolev homeomorphism in $W^{1,p}$ which cannot be approximated by diffeomorphisms (see Hencl and Vejnar \cite{HV}, Campbell, Hencl and Tengvall \cite{CHT}, Campbell, D'Onofrio and Hencl \cite{CDH}).
	
	Our aim is to find the corresponding planar result for models with second gradient, i.e. we would like to approximate $W^{2,q}$ homeomorphisms by diffeomorphisms. Models with the second gradient 
	\eqn{functional2}
	$$
	E(f)=\int_{\Omega}\bigl(W(Df(x))+\delta_0|D^2f(x)|^q \bigr)\; dx,
	$$
	where $q\in[1,\infty)$ and $\delta_0>0$, 
	were introduced by Toupin \cite{T}, \cite{T2}
	and later considered by many other authors, see e.g. Ball, Curie, Olver \cite{BCO}, Ball, Mora-Corral \cite{BMC}, M\"uller \cite[Section 6]{Mbook}, Ciarlet \cite[page 93]{Ci} and references given there. 
	The contribution of the higher gradient is usually connected with interfacial energies and is used to model various phenomena like elastoplasticity or damage.
	If $q$ is much bigger than the dimension $2$, then under some additional assumptions we can actually conclude that $J_f\geq \sigma>0$ and we can approximate (see \cite{HKr}). The more physically relevant assumptions are $q=1$ or $q=2$. In that case 
	the usual convolution approximation is not useful for approximation as it in general destroys injectivity in places where the Jacobian is close to zero.
	
	In this paper we start to study the case $q=1$. We cannot use the approach of \cite{IKO} as there is no analogy of the key extension procedure, i.e. of Rado-Choquet-Knesser theorem. Indeed, even in the one-dimensional case the minimizers of the $W^{2,q}((0,1))$ energy do not need to be injective once we prescribe the boundary data $f(0), f(1)>f(0), f'(0)>0$ and $f'(1)>0$ with derivatives much bigger than $f(1)-f(0)$. 
	Instead we use some ideas of \cite{HP}, we cover $\Omega$ by triangles and we divide triangles into good and bad according to the behavior of $f$ like differentiability on them. The total measure of bad triangles is small and we approximate $f$ on good triangles by quadratic polynomials. Then we smoothen this piecewise quadratic mapping along the edges of triangles and we obtain the desired diffeomorphism. 
	Note that piecewise linear approximation on triangles as in \cite{HP} is not good as the second derivative on linear pieces is zero and we would not be able to approximate strongly the second derivative.

	We call $\Omega\subset\er^2$ a polygonal domain, if we can find triangles $\{T_i\}_{i=1}^k$ with pairwise disjoint interiors so that $\overline{\Omega}=\bigcup_{i=1}^k T_i$. We say that $f:\overline{\Omega}\to\er$ is piecewise quadratic, if it is continuous and there is a triangulation $\overline{\Omega}=\bigcup_{i=1}^k T_i$ such that $f|_{T_i}$ is a quadratic function in each coordinate, i.e. 
	$$
	f(x,y)=[a_1+a_2x+a_3y+a_4x^2+a_5xy+a_6y^2,b_1+b_2x+b_3y+b_4x^2+b_5xy+b_6y^2]\text{ on }T_i. 
	$$ 
	Note that with our quadratic approximation we cannot achieve the continuity of the derivative in the direction perpendicular to sides of the triangles, but we can achieve that the jumps of the derivative there are small. Thus our piecewise quadratic mapping does not belong to $W^{2,1}$ but it belongs to $WBV$, i.e. its derivative is a $BV$ mapping. By $D^2_s f$ we denote the singular part of the second derivative which is supported in $\bigcup_{i=1}^k\partial T_i$ and corresponds to jump of derivatives between touching triangles. 
	
	Our first result is an analogy of \cite{MP} for second derivatives, but with no control of derivative of the inverse. It states that given a nice piecewise quadratic approximation (with small jumps of derivatives, see \eqref{smalljump}) we can find a diffeomorphic approximation. 
	
	\prt{Theorem}
	\begin{proclaim}\label{Dan}
		Let $\Omega\subset\er^2$ be a polygonal domain. Let $\delta,d>0$ and assume that $f:\overline{\Omega}\to\er^2$ is a piecewise quadratic homeomorphism so that 
		\eqn{smalljump}
		$$
		\int_{\Omega}|D^2_s f|<\delta\text{ and }J_f>d\text{ a.e. in }\Omega.
		$$
		Then for every $\epsilon>0$ we can find a $C^{\infty}$ diffeomorphism $g:\overline{\Omega}\to\er^2$ such that 
		$$
		\|f-g\|_{WBV(\Omega,\er^2)}<\epsilon+C\delta\text{ and } \|f-g\|_{L^{\infty}(\Omega,\er^2)}<\epsilon. 
		$$
	\end{proclaim}
	
	In our second result we apply the previous result to show a diffeomorphic approximation of $W^{2,1}$ homeomorphism up to a set of small measure. 
	This part is more difficult than the corresponding result in \cite{HP} as we have to deal also with second derivatives and with piecewise quadratic approximation.

\prt{Theorem}
\begin{proclaim}\label{Standa}
Let $\Omega\subset\er^2$ be a domain of finite measure. Let $f\in W^{2,1}(\Omega,\er^2)$ be a homeomorphism such that $J_f>0$ a.e.
Then for every $\nu>0$ we can find squares $\{Q_i\}_{i=1}^{\infty}$ which are locally finite (i.e. each compact set $K\subset\Omega$ intersects only finitely many of them) with
$$
\mir2\Bigl(\bigcup_{i=1}^{\infty}Q_i\Bigr)<\nu
$$
and we can find $C^{\infty}$ diffeomorphism $g:\Omega\setminus\bigcup_{i=1}^{\infty}Q_i \to\er^2$ such that 
$$
\|f-g\|_{W^{2,1}(\Omega\setminus\bigcup_{i=1}^{\infty}Q_i,\er^2)}<\nu. 
$$
\end{proclaim}

The natural plan for our future research is to obtain some analogy of the key extension result \cite[Theorem 2.1]{HP} which will lead to the full approximation result of $W^{2,1}$ homeomorphisms, i.e. we would be able to deal also with a set of small measure $\bigcup Q_i$. Moreover, we could try to obtain an analogy of \cite[Theorem 3.1]{HP}, which would even remove the assumption $J_f>0$ a.e., even-though it is quite natural in models of Nonlinear Elasticity.   

\section{Preliminaries}

%Second derivatives in polar coordinates - EXPLAIN IN DETAIL 
By $[x,y]$ we denote the point in $\er^2$ with coordinates $x$ and $y$. The scalar product of $u,v\in\er^2$ is denoted by 
$\langle u,v\rangle$. By $B(c,r)$ we denote the ball centered at $c\in\er^2$ with radius $r>0$ and $Q(c,r)$ denotes the corresponding square. 

Let $u,v\in \er^2$ be nonzero. Then we have the following elementary estimate
\eqn{elementary}
$$
\Bigl|\frac{u}{|u|}-\frac{v}{|v|}\Bigr|=
\Bigl|\frac{u}{|u|}-\frac{v}{|u|}+v\Bigl(\frac{|v|-|u|}{|u|\ |v|}\Bigr)\Bigr|
\leq 2\frac{|u-v|}{|u|}. 
$$
 
	We introduce a smooth function which grows from 0 to 1 on $[0,1]$ and plays an important role in our construction. 
	\prt{Notation}
	\begin{proclaim}\label{Notation}
		Let $\eta:\er \to \er$ be a fixed smooth function with $\eta(x) = 0$, $x\leq 0$ and $x= 1$, $x\geq 1$, $\eta$ increasing on $[0,1]$, $0\leq \eta'\leq2$ and $|\eta''|\leq4$.
	\end{proclaim}

For $f:\er^2\to\er^2$ we use the notation for first derivatives $D_x f=\frac{\partial f}{\partial x}$, $D_y=\frac{\partial f}{\partial x}$ and similarly for second derivatives $D_{xx} f=D_x (D_xf)$, $D_{yy}f=D_y(D_yf)$ and $D_{xy}=D_x(D_yf)$. Similarly for any vector $u\in\er^2$ we denote by $D_u f$ the derivative of $f$ in the $u$ direction. 

It is well-known that for $C^1$ mapping the classical and distributional derivatives agree. Hence 
	for any domain $G\subset\er^2$, $f\in C^{1}(G,\er^2)$ and $\{u,v\}$ and $\{\vec{u}, \vec{v}\}$ a pair of positively oriented orthonormal bases of $\er^2$ we have 
		\eqn{JackOfAllTrades}
		$$
		J_f(x,y) = \det Df(x,y) =  \langle D_u f(x,y), \vec{u}\rangle\langle D_vf(x,y), \vec{v}\rangle - \langle D_u f(x,y), \vec{v}\rangle\langle D_vf(x,y), \vec{u}\rangle,
		$$
		for almost every $[x,y] \in G$ where $J_f$ is the weak Jacobian of $f$. This is essentially the invariance of the determinant with respect to the choice of a positively oriented orthonormal basis.  

\subsection{Representation of higher order derivatives}\label{higherderivatives} 
Given $f\in C^2(\Omega, \er^2)$ we can view $Df$ as the the mapping from $\er^{2\times 2}\to C(\Omega)$ (i.e. as a matrix)
and we can define the symbol $D^2 f$ as the mapping from $\er^{2\times 2\times 2}\to C(\Omega)$ (i.e. as the operator on $2\times 2$ matrices). We know that $D(fg)=(Df)g+f(Dg)$ as matrices 
and similarly we can symbolically write 
$$
D^2(fg)=D\bigl((Df)g+f(Dg)\bigr)=D^2f g+Df Dg+DfDg+fD^2g
$$
where on the righthand side we see the correct terms of the product. At the end we will just estimate the norm of this by the corresponding product of norms and the exact terms will not be important for us.  

\subsection{ACL condition} Let $\Omega\subset\er^2$ be an open set. 
It is a well-known fact (see e.g. \cite[Section 3.11]{A}) that a mapping
$u\in L^1(\Omega,\er^m)$ is in $W^{1,1}(\Omega,\er^m)$
if and only there is a representative which is an absolutely continuous function 
on almost all lines parallel to coordinate axes and the variation on these lines is integrable. 

Analogously for any given direction 
$v\in\er^2$ we can fix $v^{\bot}\bot v$ and define $h_s(t)=u(sv^{\bot}+tv)$ for the right representative of $u$. 
Then for a.e. $s$ the function $h_s$ is absolutely continuous on $L_s:=\{t:sv^{\bot}+tv\in\Omega\}$ and 
$$
\int_{\infty}^{\infty}\int_{L_s}|h_s'(t)|\; dt\; ds\leq \int_{\Omega} |Du(x)|\; dx.
$$

\subsection{FEM quadratic approximation on triangles}\label{FEMapr}

We need to define a quadratic polynomial $A$ that approximates our mapping $f$ on a triangle $T$. Without loss of generality let $T$ have vertices $v_1=[0,0]$, $v_2=[r,0]$ and $v_3=[0,r]$ for some $r>0$ (other triangles we use are just a translation and rotation). We have a mapping $f:T\to\er^2$ and we want to define mapping
$$
A(x,y)=[a_1+a_2x+a_3y+a_4x^2+a_5xy+a_6y^2,b_1+b_2x+b_3y+b_4x^2+b_5xy+b_6y^2],
$$
where $a_i,b_i\in\er$, $i\in\{1,2,3,4,5,6\}$. We choose these constants so that for $j=1,2,3$
\eqn{FEM}
$$
\begin{aligned}
A(v_j)&=\oint_{B(v_j,\frac{r}{10})}f,\ D_xA(v_1)=\oint_{B(v_1,\frac{r}{10})}D_xf,\ 
-D_yA(v_3)=\oint_{B(v_3,\frac{r}{10})}-D_yf\\
&\text{ and }
(-D_{x}+D_y)A(v_3)=\oint_{B(v_3,\frac{r}{10})}(-D_{x}+D_y)f,\\
\end{aligned}
$$
that is values of $A$ in vertices corresponds to values of $f$ (in averaged sense) and values of derivatives of $A$ in vertices along three sides correspond to derivatives of $f$. 
Note that these $6$ equations in first coordinate (resp. second coordinates) determine the $6$ coefficients $a_i$ (resp. $b_i$) uniquely. 
Moreover, imagine that we have two triangles $T_1$ and $T_2$ with common side and that we define $A_1$ on $T_1$ and $A_2$ on $T_2$ by 
procedure \eqref{FEM} described above. Then $A_1=A_2$ on $\partial T_1\cap \partial T_2$ since it is a quadratic polynomial of one variable on this segment (in each coordinate) and it has the same value at two vertices and the same derivative along the segment in one of the vertices (see Fig. \ref{Fig:Triangulization}).  

%\ignore{
\definecolor{aqaqaq}{rgb}{0.6274509803921569,0.6274509803921569,0.6274509803921569}
	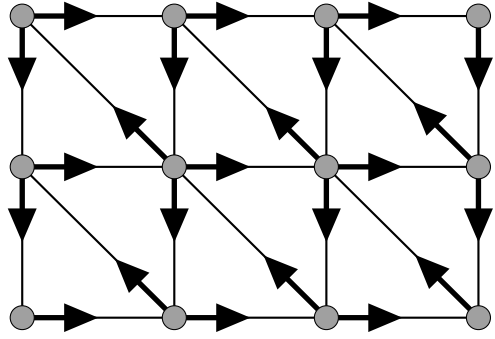
\begin{figure}
		\centering
\begin{tikzpicture}[line cap=round,line join=round,>=triangle 45,x=1.0cm,y=1.0cm]
\clip(-4.47882110339554,-2.718606347553082) rectangle (9.9545376348005625,2.756871309177344);
\draw [line width=0.8pt] (0.,2.)-- (0.,0.);
\draw [line width=0.8pt] (0.,0.)-- (0.,-2.);
\draw [line width=0.8pt] (0.,-2.)-- (2.,-2.);
\draw [line width=0.8pt] (2.,-2.)-- (4.,-2.);
\draw [line width=0.8pt] (4.,-2.)-- (6.,-2.);
\draw [line width=0.8pt] (6.,-2.)-- (6.,0.);
\draw [line width=0.8pt] (6.,0.)-- (6.,2.);
\draw [line width=0.8pt] (6.,2.)-- (4.,2.);
\draw [line width=0.8pt] (4.,2.)-- (2.,2.);
\draw [line width=0.8pt] (2.,2.)-- (0.,2.);
\draw [line width=0.8pt] (0.,0.)-- (2.,0.);
\draw [line width=0.8pt] (2.,0.)-- (4.,0.);
\draw [line width=0.8pt] (4.,0.)-- (6.,0.);
\draw [line width=0.8pt] (4.,-2.)-- (4.,0.);
\draw [line width=0.8pt] (4.,0.)-- (4.,2.);
\draw [line width=0.8pt] (2.,0.)-- (2.,2.);
\draw [line width=0.8pt] (2.,-2.)-- (2.,0.);
\draw [line width=0.8pt] (2.,-2.)-- (0.,0.);
\draw [line width=0.8pt] (2.,0.)-- (0.,2.);
\draw [line width=0.8pt] (4.,0.)-- (2.,2.);
\draw [line width=0.8pt] (6.,0.)-- (4.,2.);
\draw [line width=0.8pt] (4.,-2.)-- (2.,0.);
\draw [line width=0.8pt] (6.,-2.)-- (4.,0.);
\draw [->,line width=2.pt] (0.,-2.) -- (1.,-2.);
\draw [->,line width=2.pt] (2.,-2.) -- (3.,-2.);
\draw [->,line width=2.pt] (4.,-2.) -- (5.,-2.);
\draw [->,line width=2.pt] (4.,0.) -- (5.,0.);
\draw [->,line width=2.pt] (2.,0.) -- (3.,0.);
\draw [->,line width=2.pt] (0.,0.) -- (1.,0.);
\draw [->,line width=2.pt] (0.,2.) -- (1.,2.);
\draw [->,line width=2.pt] (2.,2.) -- (3.,2.);
\draw [->,line width=2.pt] (4.,2.) -- (5.,2.);
\draw [->,line width=2.pt] (0.,2.) -- (0.,1.);
\draw [->,line width=2.pt] (2.,2.) -- (2.,1.);
\draw [->,line width=2.pt] (4.,2.) -- (4.,1.);
\draw [->,line width=2.pt] (6.,2.) -- (6.,1.);
\draw [->,line width=2.pt] (4.,0.) -- (4.,-1.);
\draw [->,line width=2.pt] (6.,0.) -- (6.,-1.);
\draw [->,line width=2.pt] (2.,0.) -- (2.,-1.);
\draw [->,line width=2.pt] (0.,0.) -- (0.,-1.);
\draw [->,line width=2.pt] (2.,-2.) -- (1.220742276646666,-1.220742276646666);
\draw [->,line width=2.pt] (4.,-2.) -- (3.1745784185393138,-1.1745784185393136);
\draw [->,line width=2.pt] (6.,-2.) -- (5.168157733010821,-1.1681577330108213);
\draw [->,line width=2.pt] (6.,0.) -- (5.1348352459604545,0.865164754039546);
\draw [->,line width=2.pt] (4.,0.) -- (3.1968963730993507,0.8031036269006495);
\draw [->,line width=2.pt] (2.,0.) -- (1.1874197895962986,0.8125802104037014);
\begin{scriptsize}
\draw [fill=aqaqaq] (0.,2.) circle (4.5pt);
\draw [fill=aqaqaq] (0.,0.) circle (4.5pt);
\draw [fill=aqaqaq] (0.,-2.) circle (4.5pt);
\draw [fill=aqaqaq] (2.,-2.) circle (4.5pt);
\draw [fill=aqaqaq] (4.,-2.) circle (4.5pt);
\draw [fill=aqaqaq] (6.,-2.) circle (4.5pt);
\draw [fill=aqaqaq] (6.,0.) circle (4.5pt);
\draw [fill=aqaqaq] (6.,2.) circle (4.5pt);
\draw [fill=aqaqaq] (4.,2.) circle (4.5pt);
\draw [fill=aqaqaq] (2.,2.) circle (4.5pt);
\draw [fill=aqaqaq] (2.,0.) circle (4.5pt);
\draw [fill=aqaqaq] (4.,0.) circle (4.5pt);
\end{scriptsize}
\end{tikzpicture}
\caption{Triangulation and direction in vertices}\label{Fig:Triangulization}
\end{figure}
%}

\subsection{Estimates of piecewise quadratic homeomorphisms around the vertices}

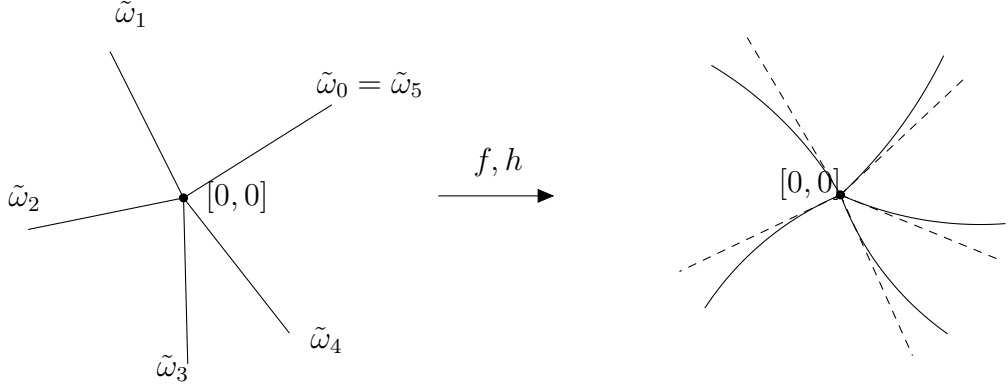
\begin{figure}
		\centering
\begin{tikzpicture}[line cap=round,line join=round,>=triangle 45,x=1.0cm,y=1.0cm]
\clip(-2.1310926041610667,-4.698703502974346) rectangle (12.690486994105237,1.1935927080873935);
\draw (0.9,-2.04)-- (2.8466178914461615,-0.807899900184615);
\draw (0.9,-2.04)-- (-0.06430412350770845,-0.10334552713845621);
\draw (0.9,-2.04)-- (-1.1396765876307942,-2.4580404054769343);
\draw (0.9,-2.04)-- (0.9554456269538384,-4.237967242646178);
\draw (0.9,-2.04)-- (2.2903907548307725,-3.8300673424615597);
\draw [->] (4.255726637538481,-2.0130586961846237) -- (5.776080810953878,-2.0130586961846237);
\draw (4.569264338288268,-1.2351575127529553) node[anchor=north west] {$f, h$};
\draw (2.5076160483038574,-0.21826342377415897) node[anchor=north west] {$\tilde{\omega}_0=\tilde{\omega}_5$};
\draw (-0.1390945942436969,0.7150503291241883) node[anchor=north west] {$\tilde{\omega}_1$};
\draw (-1.5460302515979232,-1.7227094732222412) node[anchor=north west] {$\tilde{\omega}_2$};
\draw (0.4041775902792221,-3.9793785473943646) node[anchor=north west] {$\tilde{\omega}_3$};
\draw (2.424035712223408,-3.603267035032344) node[anchor=north west] {$\tilde{\omega}_4$};
\draw (1.0449601668959985,-1.6669892491686085) node[anchor=north west] {$[0,0]$};
\draw [shift={(5.238394578892335,2.585085633169255)}] plot[domain=5.465898545977723:5.8318153895435,variable=\t]({1.0*6.287035824806309*cos(\t r)+-0.0*6.287035824806309*sin(\t r)},{0.0*6.287035824806309*cos(\t r)+1.0*6.287035824806309*sin(\t r)});
\draw [shift={(11.115861322461614,-5.684157797846188)}] plot[domain=1.9749854112537033:2.5644855026627082,variable=\t]({1.0*4.007038505812159*cos(\t r)+-0.0*4.007038505812159*sin(\t r)},{0.0*4.007038505812159*cos(\t r)+1.0*4.007038505812159*sin(\t r)});
\draw [shift={(5.3125581971077205,-4.478999001846179)}] plot[domain=0.5303642527029514:1.0276704731858834,variable=\t]({1.0*4.900683651083333*cos(\t r)+-0.0*4.900683651083333*sin(\t r)},{0.0*4.900683651083333*cos(\t r)+1.0*4.900683651083333*sin(\t r)});
\draw [shift={(11.375433986215462,2.0844812102154053)}] plot[domain=4.29006098782213:4.78632801804263,variable=\t]({1.0*4.477924147901289*cos(\t r)+-0.0*4.477924147901289*sin(\t r)},{0.0*4.477924147901289*cos(\t r)+1.0*4.477924147901289*sin(\t r)});
\draw [shift={(13.229524441600093,-0.6410317591999984)}] plot[domain=3.494504229445001:4.092671126462362,variable=\t]({1.0*3.931842479635665*cos(\t r)+-0.0*3.931842479635665*sin(\t r)},{0.0*3.931842479635665*cos(\t r)+1.0*3.931842479635665*sin(\t r)});
\draw [dash pattern=on 3pt off 3pt] (9.54,-2.0)-- (11.190024940676999,-0.43708180910768935);
\draw [dash pattern=on 3pt off 3pt] (9.54,-2.0)-- (8.316184734830822,0.08206351840000664);
\draw [dash pattern=on 3pt off 3pt] (9.54,-2.0)-- (7.426221316246199,-3.014267542092323);
\draw [dash pattern=on 3pt off 3pt] (9.54,-2.0)-- (10.48547056763084,-4.1267218153231005);
\draw [dash pattern=on 3pt off 3pt] (9.54,-2.0)-- (11.597924840861618,-2.8473994011077064);
\draw (8.581120470149823,-1.4998285770077104) node[anchor=north west] {$[0,0]$};
\begin{scriptsize}
\draw [fill=black] (0.9,-2.04) circle (1.5pt);
\draw [fill=black] (9.54,-2.0) circle (1.5pt);
\end{scriptsize}
\end{tikzpicture}
\caption{Mapping $f$ maps rays $[0,\rho_0)\times\tilde{\omega}_i$ onto quadratic curves (bold on the right side) and $h$ maps these rays onto touching segments (dashed on the right side).}\label{Fig:Vertex}
\end{figure}

	\begin{lemma}\label{GirlsNames}
		Let $Q_1, Q_2,\dots Q_N:\er^2\to\er^2$ be quadratic mappings with $Q_i(0,0) = [0,0]$. Let $0\leq \omega_0< \omega_1< \dots< \omega_{N-1} < \omega_N = \omega_0+ 2\pi < 4\pi$ and let $\tilde{\omega}_i = [\cos\omega_i, \sin\omega_i]\in \S$ be angles ordered anti-clockwise around $\S$. Let $R>0$ and $f:B(0,R)\to\er^2$ be the map defined by 
		$$
		f(t\cos\theta, t\sin\theta) = Q_i(t\cos\theta, t\sin\theta)\text{ for all }0\leq t\leq R \text{ and all }\omega_{i-1} \leq \theta \leq \omega_{i}.
		$$
		Further assume that this $f$ is a homeomorphism and $\det DQ_i \geq d >0$ on $B(0,R)$. Let $L$ and $M$ denote positive numbers such that $|DQ_i|\leq L$ on $B(0,R)$ and $|D^2Q_i|\leq M$. Then the map
		\eqn{aha}
		$$
			h(t\cos\theta, t\sin \theta) = DQ_i(0,0)[t\cos\theta, t\sin \theta] \qquad 0\leq t\leq R \text{ and } \omega_{i-1} \leq \theta \leq \omega_{i}
		$$
		is a piecewise linear homeomorphism (see Fig. \ref{Fig:Vertex}). Moreover
		\begin{equation}\label{Linda}
			h\bigl(t \cos\theta,t\sin\theta\bigr) = t\bigl(D_{[\cos\theta,\sin\theta]}h\bigl(\cos\theta,\sin\theta\bigr)\bigr)
		\end{equation}
		for all $0<t\leq R$ and all $\theta\in \er$. Further it holds that %for $0<R<\tfrac{d}{8LM}$ we have 
		\begin{equation}\label{Andrea}
		 \frac{d}{L} \leq |D_{w}h(x, y)| \leq L\text{ and }\frac{d}{L} \leq |D_{w}f(x ,y)| \leq L \quad \text{ for all } w\in \S
		\end{equation}
		and
		\begin{equation}\label{Becca}
			\bigl|h(x,y) - f(x,y)\bigr| \leq\frac{M}{2}\bigl|[x,y]\bigr|^2
		\end{equation}
		for all $[x,y] \in B(0,R)$. Moreover for any pair $u\bot v \in \S$, $u$ clockwise from $v$, denoting $\vec{v} = \tfrac{D_v f(x,y)}{|D_v f(x,y)|}$ and $\vec{u} \bot \vec{v}$ is clockwise from $\vec{v}$, it holds that
		\begin{equation}\label{Simona}
			\frac{d}{L} \leq \bigl\langle D_{u}f(x ,y), \vec{u}\bigr \rangle
		\end{equation}
		for all $[x,y] \in B(0,R)$. For the mapping $f$ we have 
		\begin{equation}\label{Cathy}
			\frac{d}{L}t - \frac{M}{2}t^2 \leq \bigl|f(t\cos \theta, t\sin\theta)\bigr| \leq Lt
		\end{equation}
		especially if $|[x,y]| \leq \tfrac{d}{LM}$ we have 
		\begin{equation}\label{Dolly}
			\frac{d}{2L} \bigl|[x,y]\bigr| \leq |f(x,y)| \leq L\bigl|[x,y]\bigr|.
		\end{equation}
		Finally
		\begin{equation}\label{Ella}
			|Df(x,y)-Dh(x,y)| \leq M\bigl|[x,y]\bigr|.
		\end{equation}
		
		Similarly if $r,\ell >0$ and $f(x,y)=Q_1(x,y)$ on $[-r,0]\times[0,\ell]$ and $f(x,y)=Q_2(x,y)$ on $[0,r]\times[0,\ell]$ is a homeomorphism with $|Df|\leq L$, $J_f\geq d$ and $|D^2f|\leq M$ then
		\begin{equation}\label{Sandra}
			\frac{d}{L} \leq |D_{w}f(x ,y)| \leq L \quad \text{ for all } w\in \S.
		\end{equation}
		Further, for any pair $u\bot v \in \S$, $u$ clockwise from $v$, denoting $\vec{v} = \tfrac{D_v f(x,y)}{|D_v f(x,y)|}$ and $\vec{u} \bot \vec{v}$ is clockwise from $\vec{v}$, we have 
		\begin{equation}\label{Ramona}
			\frac{d}{L} \leq \bigl\langle D_{u}f(x ,y), \vec{u}\bigr \rangle
		\end{equation}
		for all $[x,y] \in [-r,r]\times[0,\ell]$.
	\end{lemma}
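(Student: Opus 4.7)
The plan is to reduce everything to the facts that, on each closed sector $S_i=\{0\le t\le R,\ \omega_{i-1}\le\theta\le\omega_i\}$, the map $f$ coincides with the quadratic polynomial $Q_i$ with $Q_i(0)=0$, $\det DQ_i\ge d$, $|DQ_i|\le L$, $|D^2Q_i|\le M$, and the linear map $DQ_i(0,0)$ is exactly $h|_{S_i}$. I would first observe that the linear pieces agree along the shared rays: differentiating the identity $Q_i(t\tilde{\omega}_i)=Q_{i+1}(t\tilde{\omega}_i)$ at $t=0$ gives $DQ_i(0,0)\tilde{\omega}_i=DQ_{i+1}(0,0)\tilde{\omega}_i$, so $h$ is continuous on $B(0,R)$. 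Each linear piece is a bijection (positive determinant), and the fact that the images of the sectors $S_i$ under $DQ_i(0,0)$ stack up without overlap and cover a full turn around the origin follows from a degree/winding argument using that $f$ is an orientation-preserving homeomorphism and that $f-h$ is second order near $0$ (see below). This gives that $h$ is a piecewise linear homeomorphism, and \eqref{Linda} is immediate from the linearity of $h|_{S_i}$.

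For \eqref{Andrea} I would use that for a $2\times 2$ matrix $A$ with $\det A\ge d$ and $|A|\le L$ one has $d/L\le |Aw|\le L$ for every $w\in\S$; apply this to $A=DQ_i(0,0)=Dh|_{S_i}$ and to $A=Df(x,y)=DQ_i(x,y)$ (since $\det DQ_i\ge d$ and $|DQ_i|\le L$ throughout $B(0,R)$). Estimates \eqref{Becca} and \eqref{Ella} are then just Taylor's theorem applied to $Q_i$ on each $S_i$: writing $Q_i(z)-Q_i(0)-DQ_i(0)z=\int_0^1(1-s)D^2Q_i(sz)[z,z]\,ds$ and similarly for $DQ_i(z)-DQ_i(0)=\int_0^1 D^2Q_i(sz)z\,ds$, and bounding $|D^2Q_i|\le M$.

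The key algebraic input for \eqref{Simona} is the identity \eqref{JackOfAllTrades}. With $\vec{v}=D_vf(x,y)/|D_vf(x,y)|$ and $\vec{u}\perp\vec{v}$ clockwise from $\vec{v}$, one has $\langle D_vf,\vec{v}\rangle=|D_vf|$ and $\langle D_vf,\vec{u}\rangle=0$, so \eqref{JackOfAllTrades} collapses to
\[
J_f(x,y)=\bigl\langle D_uf(x,y),\vec{u}\bigr\rangle\,|D_vf(x,y)|,
\]
and \eqref{Simona} follows from $J_f\ge d$ (since on $S_i$, $J_f=\det DQ_i\ge d$) together with $|D_vf|\le L$ from \eqref{Andrea}. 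For \eqref{Cathy}, the upper bound comes from $f(0)=0$ and $|Df|\le L$ via integration along the segment from $0$ to $(t\cos\theta,t\sin\theta)$; the lower bound comes from $|f(z)|\ge|h(z)|-|f(z)-h(z)|\ge(d/L)t-(M/2)t^2$, where $|h(z)|\ge(d/L)t$ is exactly \eqref{Linda} combined with \eqref{Andrea}. Estimate \eqref{Dolly} is then obtained by observing that the lower bound in \eqref{Cathy} is bounded below by $(d/(2L))t$ precisely when $t\le d/(LM)$.

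The half-space variant (\eqref{Sandra}--\eqref{Ramona}) is identical in spirit: $f$ now consists of two quadratic pieces $Q_1,Q_2$ joined along $\{0\}\times[0,\ell]$, and all the arguments for \eqref{Andrea} and \eqref{Simona} go through unchanged since they are purely pointwise and use only $\det Df\ge d$ and $|Df|\le L$. The main subtlety that I expect to require care is the topological step that $h$ is a homeomorphism in the sector setting: one must rule out that the angular images under the $DQ_i(0,0)$ overshoot $2\pi$ or fail to cover a neighborhood of the origin. I would handle this by noting that by \eqref{Becca}, for small $r$ the image $f(\partial B(0,r))$ is $C r^2$-close to $h(\partial B(0,r))$; since $f$ maps $\partial B(0,r)$ homeomorphically onto a Jordan curve winding once around $0$, $h(\partial B(0,r))$ must also wind once, forcing the total angular sweep to be exactly $2\pi$ and $h$ to be injective.
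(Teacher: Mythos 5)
Your proof is correct and follows essentially the same route as the paper: continuity of $h$ from the common directional derivatives along the rays $\tilde{\omega}_i$, the one‑line inequality $d\le J\le |D_uf|\,|D_vf|\le L|D_vf|$ for \eqref{Andrea}, second‑order Taylor expansion of each $Q_i$ for \eqref{Becca} and \eqref{Ella}, the identity \eqref{JackOfAllTrades} collapsing to $J_f=\langle D_uf,\vec{u}\rangle|D_vf|$ for \eqref{Simona}, and the same triangle‑inequality bookkeeping for \eqref{Cathy}--\eqref{Dolly}. The only place you elaborate is the winding/degree argument for injectivity of $h$, which is a reasonable way to make precise the step the paper merely asserts as "not difficult to deduce."
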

	
	\begin{proof}
		First we prove that the map $h$ defined by \eqref{aha} 
		%$$
		%h(t\cos\theta, t\sin\theta) = DQ_i(0,0)(t\cos\theta, t\sin\theta)$ for $t\in [0,\infty)$ and $\omega_{i-1}\leq \theta \leq \omega_i$ 
		is a piecewise linear homeomorphism. Denote $\tilde{\theta} = [\cos\theta,\sin\theta]$ and 
		note that on each $\omega_i\leq \theta\leq \omega_{i+1}$ we have $Dh(t\tilde{\theta}) = DQ_i(0,0)$. Since $\det DQ_i(0,0)\geq d>0$ for all $1\leq i \leq N$ we know that $h$ is a homeomorphism on each $\omega_i \leq \theta \leq \omega_{i+1}$. 
		Further $f$ is continuous on $\tilde{\omega}_{i}\times [0,\rho_0]$ and hence $ \partial_{\tilde{\omega}_i} Q_i(0,0) = \partial_{\tilde{\omega}_{i}} Q_{i+1}(0,0)$ which implies that $h$ is continuous. By the piecewise linearity of $h$, the continuity of $h$ and $J_h>0$ a.e. it is not difficult to deduce that $h$ is a homeomorphism (see Fig. \ref{Fig:Vertex}). 
		
%		Using $f(0,0) = h(0,0)=0$, $|h(t\tilde{\theta})|\geq \tfrac{d}{L}t$ (by \eqref{Candy}), 
%		$$
%		\lim_{t\to 0}Df(t\cos\theta,t\sin\theta) = DQ_i(0,0)= Dh(\cos\theta,\sin\theta)\text{ for all }\omega_i<\theta<\omega_{i+1}
%		$$ 
%		 and $|D^2f|\leq M$, we have
%		$$
%			\|f-h\|_{L^{\infty}(B(0,t))}\leq \frac{Mt^2}{2} \ \  \text{ and } \ \ |f(t\cos\theta, t\sin\theta)| \geq \frac{d}{L}t -\frac{M}{2}t^2 \geq \frac{15d}{16L}t
%		$$
%		for all $0\leq t \leq R$ since $0<R<\tfrac{d}{8LM}$.

		The equality \eqref{Linda} is obvious from the piece-wise linearity of $h$. 
		Since $|Df| \leq L$ and $J_f\geq d$ we obtain for any pair $u,v\in \S$ with $u\bot v$ that 
		$$
		d\leq J_f(x,y) \leq |D_u f(x,y)|\cdot |D_v f(x,y)| \leq L|D_v f(x,y)|,
		$$ 
		which shows \eqref{Andrea} for $f$.
		Analogously $J_h\geq d$ a.e. (as $\det D Q_i(0,0) \geq d$ for all $i$) and $|D_{w} h|\leq L$ for any $w\in \S$ imply \eqref{Andrea} for $h$. 
		%Further we have from $\det D Q_i(0,0) \geq d$ that $J_h \geq d$ almost everywhere. 
		
		For all $\omega_i \leq \theta \leq \omega_{i+1}$ we have $DQ_i(0,0) = \lim_{s\to 0}Df(s\cos\theta, s\sin\theta)$. For any $\omega_i \leq \theta \leq \omega_{i+1}$, calling $\tilde{\theta} = [\cos\theta, \sin\theta]$ we have using 
		$D_{\tilde{\theta}} Q_i(0, 0) = D_{\tilde{\theta}} h(s\tilde{\theta})$
		\eqn{secondder}
		$$
		\begin{aligned}
			f(t\tilde{\theta})- h(t\tilde{\theta})&= \int_{0}^{t} D_{\tilde{\theta}} f(s\tilde{\theta})-\int_{0}^{t} D_{\tilde{\theta}} Q_i(0, 0) \ ds\\
			&= \int_{0}^{t} \int_0^s D_{\tilde{\theta}\tilde{\theta}}f(z\tilde{\theta}) dz\ ds\\
			\end{aligned}
		$$
		but since $|D_{\tilde{\theta}\tilde{\theta}}f| \leq M$ we have
		$$
			\bigl|f(t\tilde{\theta}) - h(t\tilde{\theta})\bigr| \leq \frac{M}{2}t^2
		$$
		which is \eqref{Becca}.
		
		Since $\vec{v} = \tfrac{D_v f(x,y)}{|D_v f(x,y)|}$ and $\vec{u} \bot \vec{v}$ we obtain $\langle D_v f, \vec{u}\rangle =0$. Thus we can use \eqref{JackOfAllTrades} to obtain  
		$$
			d\leq J_f(x,y) = \langle D_{u}f(x ,y), \vec{u}\rangle \langle D_{v}f(x ,y), \vec{v}\rangle
		$$
		and using \eqref{Andrea} we get \eqref{Simona}. The equation \eqref{Cathy} follows from \eqref{secondder} with the help of $|D^2f|\leq M$ and \eqref{Andrea}. Further \eqref{Dolly} follows immediately from \eqref{Cathy}. The equation \eqref{Ella} follows from the fact that $|D^2f|\leq M$ and $Dh(\tilde{\theta}) = DQ_i(0,0)$ for $\omega_i < \theta< \omega_{i+1}$. The proof of \eqref{Sandra} and \eqref{Ramona} is analogous to that of \eqref{Andrea} and \eqref{Simona}.
	\end{proof}	

	\begin{lemma}\label{Partials}
		Let $f\in W^{1,\infty}(B(0,R), \er^2)$ and $f$ is $\mathcal{C}^1$ smooth except on a finite number of rays $\tilde{\omega}_1\er^+, \tilde{\omega}_2\er^+, \dots \tilde{\omega}_N\er^+$, $\tilde{\omega}_i \in \S$, $f(0,0)=0$ and $|f(x,y)|>0$ for $[x,y] \neq [0,0]$. Let $\R:B(0,R) \to [0,\infty)$ and $\phi:B(0,R)\to\S$ be a pair of functions such that for all $[x,y] \in B(0,R)$ we have
		$$
			f(x,y) = \R(x,y)\phi(x,y). 
		$$
		Then for any $t\in(0,R)$ and any $\theta \in [0,2\pi)$ (calling $\tilde{\theta} = [\cos\theta,\sin\theta]$, $\tilde{\theta}^{\bot} = [-\sin\theta,\cos\theta]$ and calling $\phi^{\bot}(t\tilde{\theta}) \in \S$ the vector anti-clockwise perpendicular to $\phi(t\tilde{\theta})$) it holds that
		$$
			\bigl\langle \frac{\partial}{\partial \theta}\phi(t\cos\theta, t\sin\theta) , \phi^{\bot}(t\tilde{\theta}) \bigr\rangle = \frac{t}{\R(t\tilde{\theta})}\bigl\langle D_{\tilde{\theta}^{\bot}} f(t\tilde{\theta}), \phi^{\bot}(t\tilde{\theta}) \bigr\rangle.
		$$
	\end{lemma}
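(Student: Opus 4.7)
The plan is to do a direct chain-rule computation in polar coordinates and then project onto $\phi^{\bot}$ to kill the radial part.

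First I fix $t\in(0,R)$ and a direction $\theta$ such that $\tilde{\theta}=[\cos\theta,\sin\theta]$ does not lie on any of the exceptional rays $\tilde\omega_i\er^+$. Near such a point $t\tilde{\theta}$ the map $f$ is $C^1$, and since $|f|>0$ away from the origin, both $\R=|f|$ and $\phi=f/|f|$ are $C^1$ there as well. Define
$$
F(\theta)=f(t\cos\theta,t\sin\theta)=\R(t\tilde{\theta})\,\phi(t\tilde{\theta}).
$$
Since $\tfrac{\partial}{\partial\theta}(t\cos\theta,t\sin\theta)=t\tilde{\theta}^{\bot}$, the chain rule gives
$$
\tfrac{\partial}{\partial\theta}F(\theta)=t\,D_{\tilde{\theta}^{\bot}}f(t\tilde{\theta}).
$$

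Next I differentiate the factored form $F=\R\phi$ by the product rule,
$$
\tfrac{\partial}{\partial\theta}F(\theta)=\bigl(\tfrac{\partial}{\partial\theta}\R(t\tilde{\theta})\bigr)\phi(t\tilde{\theta})+\R(t\tilde{\theta})\,\tfrac{\partial}{\partial\theta}\phi(t\tilde{\theta}),
$$
and take the scalar product of both expressions with $\phi^{\bot}(t\tilde{\theta})$. Because $\langle\phi,\phi^{\bot}\rangle=0$, the radial term on the right is annihilated, leaving
$$
\R(t\tilde{\theta})\,\bigl\langle\tfrac{\partial}{\partial\theta}\phi(t\tilde{\theta}),\phi^{\bot}(t\tilde{\theta})\bigr\rangle=t\,\bigl\langle D_{\tilde{\theta}^{\bot}}f(t\tilde{\theta}),\phi^{\bot}(t\tilde{\theta})\bigr\rangle.
$$
Since $[x,y]\ne[0,0]$ implies $\R(t\tilde{\theta})>0$, dividing by $\R(t\tilde{\theta})$ yields exactly the claimed identity.

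Finally, this argument covers all but the finitely many angles $\theta$ for which $t\tilde{\theta}$ lies on one of the exceptional rays. There I would argue by one-sided limits: on each open sector between two consecutive rays $f$ is $C^1$ up to the boundary, so both the map $\theta\mapsto\phi(t\tilde{\theta})$ and the vector field $D_{\tilde{\theta}^{\bot}}f$ extend continuously from each side, and the identity passes to the one-sided derivative of $\phi$ at the exceptional angles. Since the lemma is used in the sequel only through the bound it provides on $|\partial_\theta \phi|$, this one-sided interpretation is sufficient. I do not expect a genuine obstacle here — the computation is routine chain-rule bookkeeping, and the only subtlety is recording that the non-smoothness of $f$ on finitely many rays does not interfere, which is handled by continuity of $\phi$ and $\R$ on the punctured ball.
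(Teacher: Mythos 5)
Your proof is correct and takes essentially the same route as the paper: the chain rule exchanges the angular derivative for $tD_{\tilde{\theta}^{\bot}}$, the product rule is applied to the factorization $f=\R\phi$, and projecting onto $\phi^{\bot}$ annihilates the radial term. The paper first proves $\partial_\theta\phi=tD_{\tilde{\theta}^{\bot}}\phi$ by a hands-on Lipschitz limit argument and then applies the product rule to $D_{\tilde{\theta}^{\bot}}f$, while you apply the chain rule to $f$ directly and take the product rule in $\theta$ — a minor reordering that yields the same cancellation, with the exceptional rays handled by the same one-sided considerations in both treatments.
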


	\begin{proof}
		Without loss of generality we may assume that $\tilde{\theta} = e_1 = \phi(t,0)$ and $\tilde{\theta}^{\bot} = e_2 = \phi^{\bot}(t,0)$ (just consider suitable rotations). 
		Since $\phi = \tfrac{f}{|f|}$, $f$ is Lipschitz and $|f(x,y)|>0$ for $|[x,y]|>0$ we obtain that $\phi$ is locally Lipschitz outside of $0$. This and the fact that 
		$$
		\bigl|[t\cos\theta, t\sin\theta] - [t, t\tan\theta]\bigr | \leq \theta^2\text{ for small }\theta
		$$ 
	implies 
	\eqn{using}
		$$
			\begin{aligned} \ 
				\Bigl[\frac{\partial}{\partial \theta}\phi(t\cos\theta, t\sin\theta) \Bigr]_{\theta = 0} 
				= & \lim_{\theta\to 0} \frac{\phi(t\cos\theta, t\sin\theta) - \phi(t,0)}{\theta}\\
				= & \lim_{\theta\to 0} \frac{\phi(t\cos\theta, t\sin\theta) - \phi(t, t\tan\theta)}{\theta} \\
				&+ \lim_{\theta\to 0} \frac{t \tan\theta}{\theta}\frac{\phi(t, t\tan\theta) - \phi(t,0)}{t\tan\theta}\\
				= & t D_{\tilde{\theta}^{\bot}}\phi(t,0)
			\end{aligned}
		$$
		because $\frac{t \tan\theta}{\theta} \to t$. 
		Now
		$$
		D_{\tilde{\theta}^{\bot}} f(t,0)=D_yf(t,0)=D_yR(t,0) \phi(t,0)+R(t,0)D_y\phi(t,0)
		$$
		and hence
		$$
		\bigl\langle D_{\tilde{\theta}^{\bot}} f(t,0),\phi^{\bot}(t,0)\bigr\rangle=
		\bigl\langle R(t,0)D_{\tilde{\theta}^{\bot}}\phi(t,0),\phi^{\bot}(t,0)\bigr\rangle
		$$
		and our conclusion follows using \eqref{using}. 
	%	The fact that 
		%$$\bigl\langle D_{\tilde{\theta}^{\bot}}\phi(t,0)  , \phi^{\bot}(t, 0) \bigr\rangle = \tfrac{1}{\R(t,0)}\bigl\langle D_{\tilde{\theta}^{\bot}} f(t,0),\phi^{\bot}(t,0)\bigr\rangle
		%$$ 
		%is a question of simple trigonometry.
	\end{proof}

\section{Approximation of piecewise quadratic homeomorphisms around the edges}
	
	Recall that $\eta$ denotes the function from the Preliminaries, Notation~\ref{Notation}.

	\begin{lemma}[Approximation along the edge]\label{Reptiles}
		Let $Q_1,Q_2:\er^2 \to \er^2$ be a pair of quadratic mappings coinciding on the line $\{x=0\}$ and let $\rho_0, \ell >0$ be such that the map $f =Q_1$ on $[-\rho_0, 0]\times[0, \ell]$ and $f =Q_2$ on $[0,\rho_0]\times[0, \ell]$ is a homeomorphism with 
		$$
		d = \min\bigl\{\det DQ_1(x,y), \det DQ_2(x,y); [x,y]\in[-\rho_0,\rho_0]\times[0,\ell]\bigr\} >0.
		$$ 
		 Let $L$ and $M$ denote positive numbers such that $|DQ_j|\leq L$ and $|D^2Q_j|\leq M$ on $[-\rho_0,\rho_0]\times[0,\ell]$ for $j=1,2$.
		Fix $N\in \en, N\geq 4$ such that 
		\eqn{defrho}
		$$
		\rho = \frac{\ell}{2N} <\min\Bigl\{\rho_0, \frac{d}{1000 (M+1) (L+1)}
		,\frac{1}{320}\frac{d^2}{ML^3}\Bigr\}.
		$$ 
		Then there exists an $r_0$ (depending on the geometry of $f(\{0\}\times[0,\ell])$) such that for every 
		$0< r < \min\{r_0, \tfrac{\rho^2}{2(L+1)},\frac{\rho}{40}, \tfrac{2M \rho^2}{L}\}$ there exists a diffeomorphism $g : [-\rho_0,\rho_0]\times[0,\ell]$ satisfying
		$$
			g(x,y) = f(x,y) \text{ for all } |x| > r \text{ and } y\in [0,\ell]
		$$
		and 
		\begin{equation}\label{Iguana}
			\int_{[r,r]\times[0,\ell]} |D^2g| \leq C\int_0^{\ell} |D_{xx} f(0,y)| + C M\ell r
		\end{equation}
		%\begin{equation}\label{Iguana}
	%		\int_{[r,r]\times[0,\ell]} |D^2g| \leq 8\int_0^{\ell} |D_{xx} f(0,y)| + 22 M\ell r
%		\end{equation}
		where by $|D_{xx} f(0,y)| = |D_xQ_2(0,y) - D_xQ_1(0,y)|$ we denote the size of the Dirac measure of $D_{xx}f$ at $[0,y]$.
		Further, call $\vec{u}(x,y) \in \S$ the vector clockwise perpendicular to $\tfrac{D_y g(0,y)}{|D_y g(0,y)|}$ for all $[x,y]\in [-r,r]\times[0,\ell]$, then we have
		\begin{equation}\label{Gecco}
			\bigl\langle D_x g(x,y), \vec{u}(x,y) \bigr\rangle \geq \frac{9d}{10L}.
		\end{equation}
	\end{lemma}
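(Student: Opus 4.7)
The plan is to construct $g$ by smoothly interpolating between $Q_1$ and $Q_2$ across the strip $[-r,r]\times[0,\ell]$, keeping $g=f$ outside. Setting $\eta_r(x)=\eta((x+r)/(2r))$, the natural candidate is
\[
g(x,y)=(1-\eta_r(x))\,Q_1(x,y)+\eta_r(x)\,Q_2(x,y)\quad\text{on }[-r,r]\times[0,\ell],
\]
which glues $C^\infty$-smoothly to $f$ across the lines $\{x=\pm r\}$ because $\eta$ is flat to all orders at $0$ and $1$. The structural identity driving the analysis is that, since $Q_1=Q_2$ on $\{x=0\}$, one has $D_yQ_1(0,\cdot)=D_yQ_2(0,\cdot)$ and the exact expansions
\[
Q_2(x,y)-Q_1(x,y)=x\,\delta(y)+\beta\,x^2,\qquad D_yQ_2(x,y)-D_yQ_1(x,y)=x\,\gamma(y),
\]
with $\delta(y)=D_xQ_2(0,y)-D_xQ_1(0,y)$, a constant vector $\beta$ satisfying $|\beta|\leq M$, and $|\gamma(y)|\leq 2M$.

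The core of the proof is the Jacobian lower bound $J_g\geq 9d/10$ on the strip. Differentiating the definition of $g$ gives
\[
D_xg=D_xQ_1+\sigma_1(x)\,\delta(y)+O(Mr),\qquad D_yg=D_yQ_1(0,y)+O(Mr),
\]
with $\sigma_1(x)=\eta_r(x)+x\eta_r'(x)$. Expanding the determinant yields
\[
J_g(x,y)=(1-\sigma_1(x))\,J_{Q_1}(0,y)+\sigma_1(x)\,J_{Q_2}(0,y)+E(x,y),\qquad |E|\leq CMLr.
\]
The smallness conditions $r<\rho<d/(1000(M+1)(L+1))$ and $r<2M\rho^2/L$ in \eqref{defrho} are calibrated to make $|E|\leq d/20$; the geometric constant $r_0$ is chosen so that along the image curve $f(\{0\}\times[0,\ell])$ the local data (the tangent $v(y)=D_yQ_j(0,y)$ and the two normals $u_j(y)=D_xQ_j(0,y)$) vary slowly enough on scale $r$ to keep the leading term $\geq d - d/20$. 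Once $J_g\geq 9d/10$ is in hand, injectivity of $g$ on $[-\rho_0,\rho_0]\times[0,\ell]$ follows from the boundary matching with $f$ together with positive Jacobian, and \eqref{Gecco} is immediate from \eqref{JackOfAllTrades}: since $\vec{u}\bot D_yg/|D_yg|$ gives $\langle D_yg,\vec{u}\rangle=0$, we obtain $\langle D_xg,\vec{u}\rangle=J_g/|D_yg|\geq(9d/10)/L$.

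For \eqref{Iguana}, differentiating $g$ twice, the only contributions not bounded pointwise by $CM$ appear in $D_{xx}g$, namely $2\eta_r'(x)(D_xQ_2-D_xQ_1)$ and $\eta_r''(x)(Q_2-Q_1)$. Using $|\eta_r'|\leq r^{-1}$, $|\eta_r''|\leq r^{-2}$, $|D_xQ_2-D_xQ_1|\leq|\delta(y)|+2Mr$, and $|Q_2-Q_1|\leq r|\delta(y)|+Mr^2$, both integrands are pointwise $\leq Cr^{-1}|\delta(y)|+CM$; integrating over the strip of width $2r$ produces $C\int_0^\ell|\delta(y)|\,dy+CM\ell r$. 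The $D_{xy}g$ and $D_{yy}g$ terms contribute only $O(M\ell r)$, since $D_yQ_2-D_yQ_1=x\,\gamma(y)$ vanishes on $\{x=0\}$. Together with $|D_{xx}f(0,y)|=|\delta(y)|$ this gives \eqref{Iguana}.

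The main obstacle is the Jacobian bound. The coefficient $\sigma_1$ need not lie in $[0,1]$---one only has $|\sigma_1|\leq 2$ and excursions below $0$ and above $1$ genuinely occur---so the leading term of $J_g$ is not a convex combination of $J_{Q_1}(0,\cdot)$ and $J_{Q_2}(0,\cdot)$, and the naive estimate $J_g\geq d$ fails. Controlling the excursion requires combining the boundedness of $\sigma_1$ with the smallness of $|J_{Q_2}(0,y)-J_{Q_1}(0,y)|$ (governed by the geometry of the image curve, hence the role of $r_0$) and the smallness of the error $E$ (governed by the various conditions on $r$ in \eqref{defrho}). Everything else---the diffeomorphism property from $J_g>0$ and the boundary matching, as well as the reduction of \eqref{Gecco} to the Jacobian bound---is standard.
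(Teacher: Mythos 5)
Your construction uses a single, symmetric blending $g=(1-\eta_r)Q_1+\eta_r Q_2$ with $\eta_r(x)=\eta((x+r)/(2r))$ across the whole strip, and you correctly identify the resulting obstacle: the effective coefficient $\sigma_1(x)=\eta_r(x)+x\,\eta_r'(x)$ in $D_xg$ genuinely leaves $[0,1]$. But the proposed remedy does not exist: $|J_{Q_2}(0,y)-J_{Q_1}(0,y)|=|\det(\delta(y),D_yQ_1(0,y))|$ is a fixed quantity determined by the two given quadratic maps, bounded only by $L|\delta(y)|\le 2L^2$; it is not ``governed by the geometry of the image curve,'' does not shrink as $r\to0$, and the $r_0$ in the statement has nothing to do with it --- in the paper $r_0$ only controls boundary injectivity (Step~\ref{The Reptile}). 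A concrete counterexample to your Jacobian estimate: take $Q_1(x,y)=(x,y)$ and $Q_2(x,y)=(Kx+\epsilon x^2,y)$ with $K=4$ and $\epsilon$ tiny. Here $d=1$, and with a standard shape of $\eta$ one has $\sigma_1(-r/2)\approx -\tfrac12$, giving $J_g(-r/2,y)\approx 1+(K-1)\sigma_1\approx -\tfrac12<0$. So the symmetric $g$ is not even orientation-preserving, and no choice of $r,r_0,\rho$ in the allowed ranges repairs this.

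The paper avoids this by making the interpolation \emph{one-sided} and \emph{sign-matched}. On each subinterval of length $2\rho$ it chooses between $\eta(x/r)$ (so $\eta'$ is supported where $x\ge0$) and $\eta((x+r)/r)$ (supported where $x\le0$), deciding by comparing $\langle D_xQ_1,\vec u_i\rangle$ with $\langle D_xQ_2,\vec u_i\rangle$. Since $Q_2-Q_1\approx x\,\delta(y)$ and $\eta'$ is supported where $x$ has a definite sign, the extra term $\tfrac1r\eta'\cdot(Q_2-Q_1)$ projected onto $\vec u_i$ is nonnegative up to $O(M\rho)$ errors (see \eqref{tttt}--\eqref{jjj}). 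This is exactly what rescues \eqref{Gecco} and, via \eqref{Caiman}--\eqref{KingCobra}, the Jacobian bound. The price is that the type can switch between consecutive $2\rho$-blocks, which forces the $y$-dependent blend \eqref{Terapin1}--\eqref{Terapin2}; that gluing is a substantive additional piece of the construction that your proposal omits entirely. Finally, a smaller inaccuracy: your derivation of \eqref{Gecco} treats $\vec u(x,y)$ as perpendicular to $D_yg(x,y)$, but the lemma defines it as perpendicular to $D_yg(0,y)$, so $\langle D_yg(x,y),\vec u\rangle\ne0$ and an extra $O(M\rho)$ comparison (as done after \eqref{ppp}) is needed.
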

	
	\begin{proof}
		\step{1}{Initial setup and definition of $\tilde{f}_r$}{TheEgg}
		
		%Call 
		%$$
		%L = \max\bigl\{|DQ_1(x,y)|, |DQ_2(x,y)|: [x,y]\in [-\rho_0,\rho_0]\times[0,\ell]\bigr\}%, v\in \S\bigr\}
		%$$ 
		%and call $M = \max \{|D^2Q_1|, |D^2Q_2|\}$. 
		We have $\rho$ which satisfies \eqref{defrho} and $\tfrac{\ell}{2\rho} = N \in \en$. 
		We divide $[-r,r]\times[0,\ell]$ into $N$ rectangles $[-r,r]\times[(2i-2)\rho,2i\rho]$ for $i=1,\hdots,N$. 
		We denote 
		\eqn{defvi}
		$$
		\vec{v}_i = \frac{\partial_yQ_1(0,(2i-1)\rho)}{|\partial_yQ_1(0,(2i-1)\rho)|}\text{ for }i=1,2,\dots, N
		\text{ and we fix }\vec{u}_i\in \S , \vec{u}_i\bot \vec{v}_i
		$$ 
		so that $\{\vec{u}_i,\vec{v}_i\}$ is a positively oriented basis of $\er^2$. That is $\vec{u}_i$ is clockwise purpendicular to $\vec{v}_i$. Then we define a tentative map $\tilde{f}_r$ as an appropriate convex combination of $Q_1$ and $Q_2$, i.e.
		\begin{equation}\label{RozsekniTo}
			%\begin{aligned}
				\tilde{f}_r(x,y) =
				\begin{cases}
				(1-\eta(\tfrac{x}{r}))Q_1(x,y) + \eta(\tfrac{x}{r})Q_2(x,y)\\
				\phantom{ahoj}\text{if } a) \  \langle D_xQ_2(0,(2i-1)\rho), \vec{u}_i\rangle \geq \langle D_xQ_1(0,(2i-1)\rho), \vec{u}_i\rangle, \\
				%\tilde{f}_r(x,y) =
				(1-\eta(\tfrac{x+r}{r}))Q_1(x,y) + \eta(\tfrac{x+r}{r})Q_2(x,y)\\
				\phantom{ahoj}\text{if } b) \ \langle D_xQ_1(0,(2i-1)\rho), \vec{u}_i\rangle > \langle D_xQ_2(0,(2i-1)\rho), \vec{u}_i\rangle.
			  \end{cases}
			%\end{aligned}
		\end{equation}
		for every $[x,y]\in [-r,r]\times[(2i-2)\rho,2i\rho]$. Note that 
		$\tilde{f}_r=Q_1$ for $x<-r$ and $\tilde{f}_r=Q_2$ for $x>r$.

		\step{2}{Define $g$}{The Hatchling}
		
		The above definition of $\tilde{f}_r$ is fine if all rectangles $[-r,r]\times[(2i-2)\rho,2i\rho]$ are of type a) or if all of them are of type b). Otherwise we have to continuously connect rectangles of type a) to rectangles of type b).  
		
		Suppose that we have a pair of neighboring rectangles $(-r,r)\times((2i-2)\rho, 2i\rho)$ of type $a)$ and $(-r,r)\times(2i\rho, (2i+2)\rho)$ of type $b)$ then we define $g$ as follows
		\begin{equation}\label{Terapin1}
			\begin{aligned}
				g(x,y) = \bigl[1-\eta\big(\tfrac{x}{r}+\eta(y\rho^{-1}-2i)\big)\bigr]Q_1(x,y) 
				+ \eta\big(\tfrac{x}{r}+\eta(y\rho^{-1}-2i)\big)Q_2(x,y)
			\end{aligned}
		\end{equation}
		for all $[x,y]\in (-\rho_0,\rho_0)\times[2i\rho, (2i+1)\rho]$. Note that for 
		$y_1=2i\rho$ and $y_2=(2i+1)\rho$ we have
		$$
		\eta\big(\tfrac{x}{r}+\eta(y\rho^{-1}-2i)\big)=
		\eta\big(\tfrac{x}{r}\big)
		\text{ and }\eta\big(\tfrac{x}{r}+\eta(y\rho^{-1}-2i)\big)=
		\eta\big(\tfrac{x + r}{r}\big)
		$$
		and so it agrees with \eqref{RozsekniTo} there. 
		
		Similarly when we have a pair of adjacent rectangles $(-r,r)\times((2i-2)\rho, 2i\rho)$ of type $b)$ and $(-r,r)\times(2i\rho, (2i+2)\rho)$ of type $a)$ then we define $g$ as follows
		\begin{equation}\label{Terapin2}
			\begin{aligned}
				g(x,y) = \bigl[1-\eta\big(\tfrac{x + r}{r} -\eta(y\rho^{-1}-2i)\big)\bigr]Q_1(x,y) 
				+ \eta\big(\tfrac{x + r}{r} -\eta(y\rho^{-1}-2i)\big)Q_2(x,y)
			\end{aligned}
		\end{equation}
		on $(-\rho_0,\rho_0)\times[2i\rho, (2i+1)\rho]$. On the rest of $[-\rho_0,\rho_0]\times[0,\ell]$ we let $g(x,y) = \tilde{f}_r(x,y)$. Immediately we see from the smoothness of $Q_1, Q_2$ and $\eta$ that $g$ is smooth on $(-\rho_0,\rho_0)\times(0,\ell)$.
		
		\step{3}{The injectivity of $g$}{Little Lizzard}
		
		We firstly show that $J_g>0$ which shows that $g$ is locally a homeomorphism. Secondly we show that $g$ is injective on $\partial([-\rho_0,\rho_0]\times[0,\ell])$. Together these two facts imply that the smooth mapping $g$ is in fact a diffeomorphism (see e.g. \cite{Kr}).

		The following calculations are for rectangles where $a)$-type transfers into $b)$-type and $g$ is given by \eqref{Terapin1}. The calculations are analogous for \eqref{Terapin2} and are even simpler on rectangles where $g\equiv \tilde{f}_r$. Since $Q_1$, $Q_2$ and $\eta$ are smooth we immediately get that $\tilde{f}_{r}$ is smooth on each rectangle $(-\rho_0,\rho_0)\times((2i-2)\rho, 2i\rho)$. Further
		\begin{equation}\label{Crocodile}
		\begin{aligned}
			D_x\tilde{f}_r(x,y) &= (1-\eta(\tfrac{x}{r}))D_x Q_1(x,y) + \eta(\tfrac{x}{r})D_x Q_2(x,y) + \tfrac{1}{r}\eta'(\tfrac{x}{r}) (Q_2(x,y) - Q_1(x,y))\\
			\text{ and }D_y\tilde{f}_r(x,y) &= (1-\eta(\tfrac{x}{r}))D_y Q_1(x,y) + \eta(\tfrac{x}{r})D_y Q_2(x,y).\\
			\end{aligned}
		\end{equation}
		If $(-r,r)\times((2i-2)\rho, 2i\rho)$ is an $a)$-type rectangle and $(-r,r)\times(2i\rho, (2i+2)\rho)$ is a $b)$-type rectangle then on $(-\rho_0,\rho_0)\times (2i\rho, (2i+1)\rho)$ we calculate
		\begin{equation}\label{Aligator}
			\begin{aligned}
				D_xg(x,y) =&  \big[1-\eta\big(\tfrac{x}{r}+\eta(y\rho^{-1}-2i)\big)\big]D_x Q_1(x,y)
				  + \eta\big(\tfrac{x}{r}+\eta(y\rho^{-1}-2i)\big)D_x Q_2(x,y) \\
				& +  \tfrac{1}{r}\eta'\big(\tfrac{x}{r}+\eta(y\rho^{-1}-2i)\big)(Q_2(x,y) - Q_1(x,y)).
			\end{aligned}
		\end{equation}
		Moreover we calculate
		\begin{equation}\label{Dinosaur}
			\begin{aligned}
				D_y g(x,y) =&  \big[1-\eta\big(\tfrac{x}{r}+\eta(y\rho^{-1}-2i)\big)\big]D_y Q_1(x,y)
				  + \eta\big(\tfrac{x}{r}+\eta(y\rho^{-1}-2i)\big)D_y Q_2(x,y) \\
			%	& +  \tfrac{1}{\rho}\eta'\big(\tfrac{x + r\eta(y\rho^{-1}-2i)}{r}\big)(Q_2(x,y) - Q_1(x,y)).\\
				& +  \tfrac{1}{\rho}\eta'\big(\tfrac{x}{r}+\eta(y\rho^{-1}-2i)\big)\eta'(y\rho^{-1}-2i)(Q_2(x,y) - Q_1(x,y)).
			\end{aligned}
		\end{equation}
		Because $Q_1(0,y) = Q_2(0,y)$ for $y\in [0,\ell]$ we have
		$$
			|Q_2(x,y) - Q_1(x,y)| \leq \int_0^x |D_xQ_2(s,y)| + |D_xQ_2(s,y)| \ ds \leq 2Lr.
		$$
		Utilizing this fact, \eqref{Crocodile}, \eqref{Aligator}, \eqref{Dinosaur}, $0\leq \eta\leq 1$, $|\eta'|\leq 2$ and $r < \tfrac{\rho}{40}$ we get that
		\begin{equation}\label{MuskTurtle}
			|Dg| \leq 8L.
		\end{equation}
		on each $(-r,r)\times((2i-2)\rho, 2i\rho)$. %Further, because $g(0,(2i+1)\rho) = Q_1(0,(2i+1)\rho) = Q_2(0,(2i+1)\rho)$ we have that
		Since $g$ is a convex combination of $Q_1$ and $Q_2$, $f$ is equal either to $Q_1$ or $Q_2$ and $Q_1=Q_2$ on ${0}\times[0,\ell]$ we get
		\begin{equation}\label{BeardedDragon}
			\|g - f\|_{\infty}\leq \|Q_1(x,y)-Q_1(0,y)\|_{\infty}+\|Q_2(x,y)-Q_2(0,y)\|_{\infty} \leq 2Lr.
		\end{equation}
		
		Using $Q_1(0,y) = Q_2(0,y)$ we also have 
		$$
		\bigl\langle Q_2(x,y) - Q_1(x,y), \vec{u}_i \bigr\rangle = \int_0^x \langle D_xQ_2(s,y),\vec{u}_i \rangle  - \langle D_xQ_1(s,y), \vec{u}_i \rangle \ ds.
		$$
		%Assume that $\langle D_xQ_2(0,(2i-1)\rho),\vec{u}_i\rangle \geq  \langle D_xQ_1(0,(2i-1)\rho), \vec{u}_i\rangle$ 
		%(which holds for a) type rectangles) and 
		Use $|D^2Q_{j}|\leq M$ for $j=1,2$ and $r\leq \tfrac{\rho}{40}$ to get 
		\eqn{ttt}
		$$
		\bigl|D Q_{j}(s,y) - D Q_{j}(0,(2i-1)\rho)\bigr| \leq 2M\rho \text{ for }s\in[-r,r]\text{ and }y\in[(2i-2)\rho,2i\rho]
		$$ 
	and hence with the help of $\rho\leq \frac{d}{1000 ML}$ 
		$$
		\bigl\langle Q_2(x,y) - Q_1(x,y), \vec{u}_i \bigr\rangle \geq - 4M\rho x \geq -\frac{dr}{250L}.
		$$
		%We have $0\leq (\eta(\tfrac{x}{r}))'\leq \tfrac{2}{r}$ and 
		From \eqref{Ramona} at the point $[0,(2i-1)\rho]$ for $v=[0,1]$ and $u=[1,0]$ we obtain
		$$
		\bigl\langle D_xQ_1(0,(2i-1)\rho), \vec{u}_i \bigr\rangle\geq \frac{d}{L}
		$$
		and hence we can combine it with the previous inequality to obtain
		\eqn{tttt}
		$$
			\frac{2}{r} \bigl\langle Q_2(x,y) - Q_1(x,y), \vec{u}_i\bigr\rangle \geq 
			-\frac{1}{125}\bigl\langle D_xQ_1(0,(2i-1)\rho), \vec{u}_i \bigr\rangle.
		$$
		Using \eqref{Crocodile}, $\langle D_xQ_2(0,(2i-1)\rho),\vec{u}_i\rangle \geq  \langle D_xQ_1(0,(2i-1)\rho), \vec{u}_i\rangle$ 
		(which holds for a) type rectangles) and \eqref{ttt} we obtain (for $[x,y]$ where $g=\tilde{f}_r$) 
		\eqn{jjj}
		$$
		\begin{aligned}
		\langle D_xg(x,y), \vec{u}_i\rangle=&\bigl\langle (1-\eta(\tfrac{x}{r}))D_x Q_1(x,y) + \eta(\tfrac{x}{r})D_x Q_2(x,y) + \tfrac{1}{r}\eta'(\tfrac{x}{r}) (Q_2(x,y) - Q_1(x,y)), \vec{u}_i\bigr\rangle\\
		\geq &\bigl\langle (1-\eta(\tfrac{x}{r}))D_x Q_1(0,(2i-1)\rho) + \eta(\tfrac{x}{r})D_x Q_2(0,(2i-1)\rho), \vec{u}_i\bigr\rangle\\
		&-|D_x Q_1(0,(2i-1)\rho)-D_x Q_1(x,y)|-|D_x Q_2(0,(2i-1)\rho)-D_x Q_2(x,y)|\\
		&+\tfrac{1}{r}\eta'(\tfrac{x}{r}) \bigl\langle Q_2(x,y) - Q_1(x,y), \vec{u}_i\bigr\rangle\\
		\geq& \bigl\langle D_x Q_1(0,(2i-1)\rho), \vec{u}_i\bigr\rangle-4M\rho+\tfrac{1}{r}\eta'(\tfrac{x}{r})\bigl\langle Q_2(x,y) - Q_1(x,y), \vec{u}_i\bigr\rangle,\\
		\geq& \bigl\langle D_x Q_1(0,(2i-1)\rho), \vec{u}_i\bigr\rangle \Bigl(1-\frac{1}{250}-\frac{1}{125}\Bigr),\\
		\end{aligned}
		$$
		where we have used \eqref{Ramona} and $\rho\leq\frac{d}{1000 ML}$ to estimate the term $4M\rho$ and the term 
		$\tfrac{1}{r}\eta'(\tfrac{x}{r}) \bigl\langle Q_2(x,y) - Q_1(x,y), \vec{u}_i\bigr\rangle$ is either positive and then we can estimate it by $0$ or it is negative and then we use $|\eta'|\leq 2$ and \eqref{tttt}. Similarly we can use \eqref{Aligator} and also in this case we obtain 
		\begin{equation}\label{BlackMamba}
		\langle D_xg(x,y), \vec{u}_i\rangle \geq  
		\frac{123}{125}\bigl\langle D_xQ_1(0,(2i-1)\rho), \vec{u}_i \bigr\rangle 
		\end{equation}
		on $(-r,r)\times(0, \ell)$ which together with \eqref{Ramona} implies
		\eqn{ppp}
		$$
		\langle D_xg(x,y), \vec{u}_i\rangle \geq  
		\frac{123}{125}\frac{d}{L}. 
		$$
		Now $d\leq J_{Q_1}\leq |D_x Q_1| |D_y Q_2|$ implies $|D_xQ_1|\geq \frac{d}{L}$. Recall that $\vec{u} = \vec{u}(y)$ is the vector in $\S$ clockwise purpendicular to $\frac{D_yQ_i(0,y)}{|D_yQ_i(0,y)|}$. Using \eqref{defvi}, \eqref{elementary}, \eqref{MuskTurtle} and \eqref{defrho} we obtain
		$$
		\begin{aligned}
		\bigl|\langle D_xg(x,y), \vec{u}_i\rangle-\langle D_xg(x,y), \vec{u}\rangle \bigr|
		&\leq |D_xg(x,y)|\ |\vec{u}_i-\vec{u}|=|D_xg(x,y)|\ |\vec{v}_i-\vec{v}|\\
		&\leq 8L \frac{|D_y Q_1(0,(2i-1)\rho)-D_yQ_1(0,y)|}{|D_y Q_1(0,(2i-1)\rho)|}2\\
		&\leq 8L \frac{M\rho}{\frac{d}{L}}2\leq \frac{1}{20}
\frac{d}{L}		\end{aligned}
		$$
		which together with \eqref{ppp} imply \eqref{Gecco} since $g=Q_1$ in $[0,y]$ (see \eqref{RozsekniTo} and \eqref{Terapin1}). 
		%shows \eqref{Gecco}  NA PRAVE STRANE NENI Ui - TOTO JE POTREBA OPRAVIT.  
		In \eqref{Aligator} we dealt only with the $a)$-type to $b)$-type transitions but the calculations easily extend also for the $b)$-type to $a)$-type transitions. The only difference is that we use 
		$\langle D_xQ_1(0,(2i-1)\rho),\vec{u}_i\rangle \geq  \langle D_xQ_2(0,(2i-1)\rho), \vec{u}_i\rangle$ in \eqref{jjj} above and hence we have $\langle D_xQ_2(0,0), \vec{u}_i \rangle$ on the righthand side of \eqref{BlackMamba}.

%	We can calculate that for all $[x,y] \in (-r,r)\times((2i-2)\rho, 2i\rho)$ 
	%by $|D^2Q_i|\leq M$ that
	%\eqn{odhadder}
	%$$
	%	\bigl|D_yQ_{j}(x,y) - D_yQ_{j}(0, (2i-1)\rho)\bigr| \leq 2M\rho 
	%	\text{ for }j=1,2.
	%	$$
			By the definition of $\vec{v}_i$ \eqref{defvi}, $\vec{u}_i\bot \vec{v}_i$ and $Q_1=Q_2$ on $\{0\}\times [0,\ell]$ we have 
		$\langle D_y Q_{j}(0, (2i-1)\rho), \vec{u}_i\rangle = 0$. It follows using \eqref{ttt} that
		$$
		|\langle D_y Q_j (x,y),\vec{u}_i\rangle|\leq 2M\rho\text{ for }j=1,2. 
		$$
With the help of $r<\frac{2M \rho^2}{L}$ we obtain 
		$$
		\begin{aligned}
		|Q_1(x,y)-Q_2(x,y)|&\leq |Q_1(x,y)-Q_1(0,y)|+|Q_2(0,y)-Q_2(x,y)|  
		\leq 2Lr\leq 4M\rho^2
		\end{aligned}
		$$
		and hence 
		$$
			\bigl\langle Q_2(x,y) - Q_1(x,y), \vec{u}_i \bigr \rangle \leq 4M\rho^2.
		$$
		Applying this in \eqref{Dinosaur} we get that
		\eqn{last}
		$$
			\langle D_y g(x,y), \vec{u}_i\rangle \leq 2M\rho +  \frac{4}{\rho}4M\rho^2\leq 18M\rho.
		$$

		We can express the values of $Dg$ with respect to the basis $\{\vec{u}_i,\vec{v}_i\}$ as 
		$$
			Dg(x,y)= 
			\left(\begin{matrix}
				\langle D_x g(x,y), \vec{u}_i\rangle,& \langle D_x g(x,y), \vec{v}_i\rangle\\
				\langle D_y g(x,y), \vec{u}_i\rangle,& \langle D_y g(x,y), \vec{v}_i\rangle
			\end{matrix}\right)
			= \left(\begin{matrix}
				a_1,& a_2\\
				b_1, &b_2  
			\end{matrix}\right).
		$$
		Therefore, applying \eqref{BlackMamba}, \eqref{MuskTurtle}, \eqref{last}, $\rho<\tfrac{d}{1000LM}$, definition of $\vec{v_i}$ \eqref{defvi}, \eqref{ttt} and \eqref{Sandra} (i.e. $\langle D_y Q_{1}(0,(2i-1)\rho), \vec{v}_i\rangle\geq \tfrac{d}{L}$) we conclude that 
		\begin{equation}\label{Caiman}
			\begin{aligned}
				a_1 &> \frac{123}{125} \bigl\langle D_xQ_1(0,(2i-1)\rho), \vec{u}_i\bigr\rangle\\
				|a_2| &\leq 8L\\
				|b_1| &\leq 18M\rho \leq \frac{d}{50L}\\
				b_2& \geq |D_yQ_{1}(0,(2i-1)\rho)| - 2M\rho \geq \frac{499}{500}|D_yQ_{1}(0,(2i-1)\rho)|
			\end{aligned}
		\end{equation}
		on the entire rectangle $(-r,r)\times((2i-2)\rho, 2i\rho)$. 
		From the definition of $\vec{v}_i$ and $\vec{u_i}\bot \vec{v}_i$ we know that $\langle D_yQ_{1}(0,(2i-1)\rho),\vec{u}_i\rangle=0$ and 
		hence using \eqref{JackOfAllTrades} 
		$$
		\bigl\langle D_xQ_1(0,(2i-1)\rho), \vec{u}_i\bigr\rangle |D_yQ_{1}(0,(2i-1)\rho)|\geq \det DQ_1(0,(2i-1)\rho)\geq d. 
		$$
		Therefore simple computation gives
		\begin{equation}\label{KingCobra}
		J_{g}(x,y) \geq \frac{61377}{62500}d- \frac{8d}{50} \geq \frac{4}{5}d
		\end{equation}
		on $(-r,r)\times((2i-2)\rho, 2i\rho)$. 
		
		\step{4}{The injectivity of $g$}{The Reptile}
		
		By a combination of \eqref{BlackMamba} for $i=0$ and $i=N$ and the fact that $f$ is a homeomorphism %close to $g$ (see \eqref{BeardedDragon}) 
		we get that $g$ is injective on both segments $[-\rho_0,\rho_0]\times\{0\}$ and $[-\rho_0,\rho_0]\times\{\ell\}$. Because $f$ is a homeomorphism we have that
		$$
			\dist\Big(f([-r,r]\times\{0\}), f\big(\partial([-\rho_0,\rho_0]\times[0,\ell])\setminus \bigl([-\rho_0,\rho_0]\times\{0\}\bigr)\big) \Big) >0
		$$
		and similarly
		$$
			\dist\Big(f([-r,r]\times\{\ell\}), f\big(\partial([-\rho_0,\rho_0]\times[0,\ell])\setminus 
			\bigl([-\rho_0,\rho_0]\times\{\ell\}\bigr)\big) \Big) >0.
		$$
		Therefore, by \eqref{BeardedDragon} and $f(x,y)=g(x,y)$ for $|x|\geq r$ there exists an $r_0>0$ (this is the $r_0$ of our claim) such that for all $0<r<r_0$ the mapping $g$ constructed from $\tilde{f}_r$ satisfies
		$$
			g([-\rho_0,\rho_0]\times\{0\})\cap g\big(\partial([-\rho_0,\rho_0]\times[0,\ell])\setminus \bigl([-\rho_0,\rho_0]\times\{0\}\bigr)\big) = \emptyset
		$$
		and
		$$
			g([-\rho_0,\rho_0]\times\{\ell\})\cap g\big(\partial([-\rho_0,\rho_0]\times[0,\ell])\setminus \bigl([-\rho_0,\rho_0]\times\{\ell\}\bigr)\big)= \emptyset
		$$
		for all $r\leq r_0$. But together that means that $g$ is injective on $\partial([-\rho_0,\rho_0]\times[0,\ell])$. 
		Since \eqref{KingCobra} implies local injectivity this is enough to conclude that $g$ is injective everywhere in $[-\rho_0,\rho_0]\times[0,\ell]$ and thus a diffeomorphism (see e.g. \cite{Kr}).
		
		\step{5}{Estimates of $|D^2g|$}{The Carcass}
		
		%It is not hard to observe from the definition that the components of $D^2g$ on the set $\{\tilde{f}_r=g\}$ are estimated by the components of $D^2g$ from \eqref{Terapin1} and \eqref{Terapin2}. 
		We calculate the estimates of $D^2g$ in detail only for the a) to b) type transition given by \eqref{Terapin1}. It is not difficult to check that the computation for b) to a) type transition given by \eqref{Terapin2} are essentially the same and the estimates 
		for the set where $\{\tilde{f}_r=g\}$ given by \eqref{RozsekniTo} are even simpler. 
		
		We have the following elementary estimates (recall that $|D_{xx} f(0,y)| = |D_xQ_2(0,y) - D_xQ_1(0,y)|$)
		\begin{equation}\label{Comodo Dragon}
			|D_xQ_2(x,y) - D_x Q_1(x,y)| \leq |D_{xx}f(0,y)| + 2M|x| \leq |D_{xx}f(0,y)| + 2Mr,
		\end{equation}
		further, since $D_yQ_2(0,y) =D_y Q_1(0,y)$, we have
		\begin{equation}\label{African Spurred}
			|D_yQ_2(x,y) - D_y Q_1(x,y)| \leq 2Mr
		\end{equation}
		and
		\begin{equation}\label{Leviathan}
		\begin{aligned}
			|Q_2(x,y) - Q_1(x,y)| 
			&\leq |D_{xx}f(0,y)|\cdot|x|+\sum_{j=1}^2|Q_j(x,y)-Q_j(0,y)-x D_xQ_j(0,y)|\\
			&\leq |D_{xx}f(0,y)|r+\sum_{j=1}^2\Bigl|\int_0^x \bigl(D_x Q_j(s,y)-D_xQ_j(0,y)\bigr)\; ds\Bigr|\\
			&\leq |D_{xx}f(0,y)|r + Mr^2.\\
		\end{aligned}
		\end{equation}
		The second derivatives of \eqref{Terapin1} are calculated by
		$$
			\begin{aligned}
				D_{xx} g(x,y) =& (1-\eta\big(\tfrac{x}{r} + \eta(y\rho^{-1}-2i)\big))D_{xx}Q_1 +\eta\big(\tfrac{x}{r} + \eta(y\rho^{-1}-2i)\big)D_{xx}Q_2\\
				&\quad + \frac{1}{r^2}\eta''\big(\tfrac{x}{r} + \eta(y\rho^{-1}-2i)\big)(Q_2(x,y) - Q_1(x,y))\\
				&\quad + \frac{1}{r}\eta'\big(\tfrac{x}{r} + \eta(y\rho^{-1}-2i)\big)(D_xQ_2(x,y) - D_x Q_1(x,y)).
			\end{aligned}
		$$
		Using $|D^2Q_j(x,y)|\leq M$, $|\eta'|\leq 2$, $|\eta''|\leq 4$, \eqref{Comodo Dragon} and \eqref{Leviathan} we get
		$$
		|D_{xx} g(x,y)| \leq \frac{C}{r}|D_{xx}f(0,y)|+ CM.
		$$
%		$$
%		|D_{xx} g(x,y)| \leq \frac{6}{r}|D_{xx}f(0,y)|+ 9M.
%		$$
		%Specifically we calculated for an $a)$-type to $b)$-type transition, but the estimates are the same for the reverse. 
		Further
		$$
			\begin{aligned}	
				D_{xy}g(x,y) &= \big(1-\eta\big(\tfrac{x}{r} + \eta(y\rho^{-1}-2i)\big)\big)D_{xy}Q_1(x,y)
				+\eta\big(\tfrac{x}{r} + \eta(y\rho^{-1}-2i)\big)D_{xy}Q_2(x,y) \\
				&\quad + \frac{1}{r}\eta'\big(\tfrac{x}{r} + \eta(y\rho^{-1}-2i)\big)(D_y Q_2(x,y) - D_y Q_1(x,y))\\
				&\quad + \frac{1}{\rho}\eta'\big(\tfrac{x}{r} + \eta(y\rho^{-1}-2i)\big) \eta'(y\rho^{-1}-2i)(D_x Q_2(x,y) - D_x Q_1(x,y))\\
				&\quad + \frac{1}{r\rho}\eta''\big(\tfrac{x}{r} + \eta(y\rho^{-1}-2i)\big) \eta'(y\rho^{-1}-2i)(Q_2(x,y) - Q_1(x,y))
			\end{aligned}
		$$
		and using $|D^2Q_i(x,y)|\leq M$, $|\eta'|\leq 2$, $|\eta''|\leq 4$, \eqref{Comodo Dragon}, \eqref{African Spurred} and \eqref{Leviathan} we get
		$$
			\begin{aligned}	
				|D_{xy}g(x,y)| &\leq CM + \frac{C}{\rho}|D_{xx}f(0,y)|+\frac{CMr}{\rho}
			\end{aligned}
		$$
%		$$
%			\begin{aligned}	
%				|D_{xy}g(x,y)| &\leq 5M + \frac{12}{\rho}|D_{xx}f(0,y)|+\frac{16Mr}{\rho}
%			\end{aligned}
%		$$
		and the estimate holds for all $[x,y] \in [-r,r]\times[0,\ell]$ where \eqref{Terapin1} applies. Finally
		$$
			\begin{aligned}	
				D_{yy}g(x,y) &= \big[1-\eta\big(\tfrac{x}{r} + \eta(y\rho^{-1}-2i)\big)\big]D_{yy}Q_1(x,y) 
				+\eta\big(\tfrac{x}{r} + \eta(y\rho^{-1}-2i)\big)D_{yy}Q_2(x,y) \\
				&\quad + \tfrac{1}{\rho}\eta'\big(\tfrac{x}{r} + \eta(y\rho^{-1}-2i)\big) \eta'(y\rho^{-1}-2i)(D_y Q_2(x,y) - D_y Q_1(x,y))\\
				&\quad + \tfrac{1}{\rho^2}\eta''\big(\tfrac{x}{r} + \eta(y\rho^{-1}-2i)\big) \big[\eta'(y\rho^{-1}-2i)\big]^2(Q_2(x,y) - Q_1(x,y))\\
				&\quad + \tfrac{1}{\rho^2}\eta'\big(\tfrac{x}{r} + \eta(y\rho^{-1}-2i)\big) \eta''(y\rho^{-1}-2i)(Q_2(x,y) - Q_1(x,y))
			\end{aligned}
		$$
		so
		$$
			\begin{aligned}	
				|D_{yy}g(x,y)| &\leq M + \frac{CMr}{\rho} + \frac{Cr}{\rho^2}|D_{xx}f(0,y)| + \frac{CMr^2}{\rho^2}.
			\end{aligned}
		$$
%		$$
%			\begin{aligned}	
%				|D_{yy}g(x,y)| &\leq M + \frac{8Mr}{\rho} + \frac{24r}{\rho^2}|D_{xx}f(0,y)| + \frac{24Mr^2}{\rho^2}.
%			\end{aligned}
%		$$
		
		Integrating the above over $[-r,r]\times[0,\ell]$ and estimating 
		$$
		|D^2g(x,y)|\leq|D_{xx}g(x,y)| + 2|D_{xy}g(x,y)|+|D_{yy}g(x,y)|
		$$ 
		we get using $r\leq \frac{\rho}{40}$
			$$
			\begin{aligned}
				\int_{[-r,r]\times[0,\ell]}|D^2g(x,y)|&\leq 
				Cr\int_0^{\ell}|D_{xx}f(0,y)|\; dy\Bigl(\frac{1}{r}+\frac{1}{\rho}+\frac{r}{\rho^2}\Bigr) +
				Cr\ell\Bigl[M+\frac{Mr}{\rho}+\frac{M r^2}{\rho^2}\Bigr]\\
				&\leq C\int_0^{\ell}|D_{xx}f(0,y)| \ dy + CM\ell r,\
			\end{aligned}
		$$
		%$$
	%		\begin{aligned}
	%			\int_{[-r,r]\times[0,\ell]}|D^2g(x,y)|&\leq 
	%			r\int_0^{\ell}|D_{xx}f(0,y)|\; dy\Bigl(\frac{6}{r}+\frac{24}{\rho}+\frac{24r}{\rho^2}\Bigr) +
	%			r\ell\Bigl[20 M+\frac{40 Mr}{\rho}+\frac{24 M r^2}{\rho^2}\Bigr]\\
	%			&\leq 8\int_0^{\ell}|D_{xx}f(0,y)| \ dy + 22M\ell r,\
%			\end{aligned}
		%$$
		and \eqref{Iguana} follows.
	\end{proof}
	
%We have discussed
%$$
%\eta(y)Q^1+(1-\eta(y))Q^2\text{ along the edges}.
%$$
%How to prove in detail that it is invertible? 

%The most difficult part is to connect the "polar" part with the part around the edges. After one beer :) we have %suggested
%$$
%\eta\bigl(\tau(x)\phi r+(1-\tau(x))y\bigr)Q^1+\bigl[1-\eta\bigl(\tau(x)\phi r+(1-\tau(x))y\bigr)\bigr]Q^2. 
%$$
%We just have to adjust so that on the boundary of the circle we have 
%$$
%[\tR,\tPhi]=\eta(\phi r)Q^1+(1-\eta(\phi r))Q^2. 
%$$

\section{Approximation of piecewise quadratic homeomorphisms around the vertices and proof of Theorem \ref{Dan}}

	Again $\eta$ denotes the function from the Preliminaries, Notation~\ref{Notation}. 

%\ignore{	
	\definecolor{xfqqff}{rgb}{0.4980392156862745,0.,1.}
\definecolor{ffqqqq}{rgb}{1.,0.,0.}
\definecolor{qqqqff}{rgb}{0.,0.,1.}

	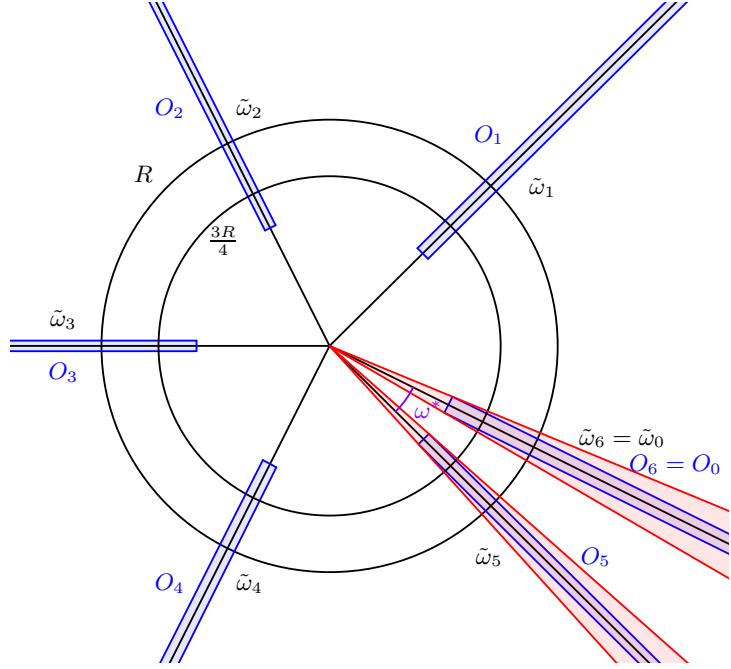
\begin{figure}
		\centering
		\begin{tikzpicture}[line cap=round,line join=round,>=triangle 45,x=3.5cm,y=3.5cm]
		\clip(-1.2,-1.2) rectangle (1.5,1.3);
		\fill[line width=0.7pt,color=qqqqff,fill=qqqqff,fill opacity=0.10000000149011612] (0.33,0.37) -- (0.37,0.33) -- (1.52,1.48) -- (1.48,1.52) -- cycle;
		\fill[line width=0.7pt,color=qqqqff,fill=qqqqff,fill opacity=0.10000000149011612] (-0.20273988595078735,0.45708464026719614) -- (-0.2425277711338731,0.4371906976756542) -- (-0.8414008026636257,1.6304202201131621) -- (-0.8,1.65) -- cycle;
		\fill[line width=0.7pt,color=qqqqff,fill=qqqqff,fill opacity=0.10000000149011612] (-0.5,0.02) -- (-0.5,-0.02) -- (-1.95,-0.02) -- (-1.95,0.02) -- cycle;
		\fill[line width=0.7pt,color=qqqqff,fill=qqqqff,fill opacity=0.10000000149011612] (-0.2,-0.46) -- (-0.2506344538339652,-0.4325150274210417) -- (-0.9501881920278096,-1.825038151765126) -- (-0.8899183415975359,-1.8553359684679116) -- cycle;
		\fill[line width=0.7pt,color=qqqqff,fill=qqqqff,fill opacity=0.10000000149011612] (0.3341412274234713,-0.3722460277662893) -- (0.37331874475444937,-0.333068510435313) -- (1.38,-1.34) -- (1.34,-1.38) -- cycle;
		\fill[line width=0.7pt,color=qqqqff,fill=qqqqff,fill opacity=0.10000000149011612] (0.43105403345273285,-0.252995163609721) -- (0.46198365239824185,-0.1925105754496172) -- (1.9367216556960503,-0.9299595827874171) -- (1.9050699004675513,-0.9904179916508398) -- cycle;
		\draw [line width=0.7pt] (0.,0.)-- (2.,-1.);
		\draw [line width=0.7pt] (0.,0.)-- (1.5,1.5);
		\draw [line width=0.7pt] (0.,0.)-- (-1.,2.);
		\draw [line width=0.7pt] (0.,0.)-- (-2.,0.);
		\draw [line width=0.7pt] (0.,0.)-- (-1.,-2.);
		\draw [line width=0.7pt] (0.,0.)-- (1.5,-1.5);
		\draw [line width=0.7pt] (0.,0.) circle (3.cm);
		\draw [line width=0.7pt,color=qqqqff] (0.33,0.37)-- (0.37,0.33);
		\draw [line width=0.7pt,color=qqqqff] (0.37,0.33)-- (1.52,1.48);
		\draw [line width=0.7pt,color=qqqqff] (1.52,1.48)-- (1.48,1.52);
		\draw [line width=0.7pt,color=qqqqff] (1.48,1.52)-- (0.33,0.37);
		\draw [line width=0.7pt,color=qqqqff] (-0.20273988595078735,0.45708464026719614)-- (-0.2425277711338731,0.4371906976756542);
		\draw [line width=0.7pt,color=qqqqff] (-0.2425277711338731,0.4371906976756542)-- (-0.8414008026636257,1.6304202201131621);
		\draw [line width=0.7pt,color=qqqqff] (-0.8414008026636257,1.6304202201131621)-- (-0.8,1.65);
		\draw [line width=0.7pt,color=qqqqff] (-0.8,1.65)-- (-0.20273988595078735,0.45708464026719614);
		\draw [line width=0.7pt,color=qqqqff] (-0.5,0.02)-- (-0.5,-0.02);
		\draw [line width=0.7pt,color=qqqqff] (-0.5,-0.02)-- (-1.95,-0.02);
		\draw [line width=0.7pt,color=qqqqff] (-1.95,-0.02)-- (-1.95,0.02);
		\draw [line width=0.7pt,color=qqqqff] (-1.95,0.02)-- (-0.5,0.02);
		\draw [line width=0.7pt,color=qqqqff] (-0.2,-0.46)-- (-0.2506344538339652,-0.4325150274210417);
		\draw [line width=0.7pt,color=qqqqff] (-0.2506344538339652,-0.4325150274210417)-- (-0.9501881920278096,-1.825038151765126);
		\draw [line width=0.7pt,color=qqqqff] (-0.9501881920278096,-1.825038151765126)-- (-0.8899183415975359,-1.8553359684679116);
		\draw [line width=0.7pt,color=qqqqff] (-0.8899183415975359,-1.8553359684679116)-- (-0.2,-0.46);
		\draw [line width=0.7pt,color=qqqqff] (0.3341412274234713,-0.3722460277662893)-- (0.37331874475444937,-0.333068510435313);
		\draw [line width=0.7pt,color=qqqqff] (0.37331874475444937,-0.333068510435313)-- (1.38,-1.34);
		\draw [line width=0.7pt,color=qqqqff] (1.38,-1.34)-- (1.34,-1.38);
		\draw [line width=0.7pt,color=qqqqff] (1.34,-1.38)-- (0.3341412274234713,-0.3722460277662893);
		\draw [line width=0.7pt,color=qqqqff] (0.43105403345273285,-0.252995163609721)-- (0.46198365239824185,-0.1925105754496172);
		\draw [line width=0.7pt,color=qqqqff] (0.46198365239824185,-0.1925105754496172)-- (1.9367216556960503,-0.9299595827874171);
		\draw [line width=0.7pt,color=qqqqff] (1.9367216556960503,-0.9299595827874171)-- (1.9050699004675513,-0.9904179916508398);
		\draw [line width=0.7pt,color=qqqqff] (1.9050699004675513,-0.9904179916508398)-- (0.43105403345273285,-0.252995163609721);
		\draw [line width=0.7pt] (0.,0.) circle (2.25cm);
		\draw [shift={(0.,0.)},line width=0.7pt,color=ffqqqq,fill=ffqqqq,fill opacity=0.10000000149011612]  (0,0) --  plot[domain=5.443896159132604:5.554706004709364,variable=\t]({1.*2.145685634274954*cos(\t r)+0.*2.145685634274954*sin(\t r)},{0.*2.145685634274954*cos(\t r)+1.*2.145685634274954*sin(\t r)}) -- cycle ;
		\draw [shift={(0.,0.)},line width=0.7pt,color=ffqqqq,fill=ffqqqq,fill opacity=0.10000000149011612]  (0,0) --  plot[domain=5.752437368628702:5.888362119855261,variable=\t]({1.*2.14395349587217*cos(\t r)+0.*2.14395349587217*sin(\t r)},{0.*2.14395349587217*cos(\t r)+1.*2.14395349587217*sin(\t r)}) -- cycle ;
		\draw [shift={(0.,0.)},line width=0.7pt,color=xfqqff]  plot[domain=5.497787143782138:5.81953769817878,variable=\t]({1.*0.35119700943133625*cos(\t r)+0.*0.35119700943133625*sin(\t r)},{0.*0.35119700943133625*cos(\t r)+1.*0.35119700943133625*sin(\t r)});
		\begin{scriptsize}
		\draw[color=black] (1.1,-0.35) node {$\tilde{\omega}_6 = \tilde{\omega}_0$};
		\draw[color=black] (0.8,0.6) node {$\tilde{\omega}_1$};
		\draw[color=black] (-0.3,0.9) node {$\tilde{\omega}_2$};
		\draw[color=black] (-1,0.1) node {$\tilde{\omega}_3$};
		\draw[color=black] (-0.3,-0.9) node {$\tilde{\omega}_4$};
		\draw[color=black] (0.6,-0.8) node {$\tilde{\omega}_5$};
		\draw[color=qqqqff] (1.3,-0.45) node {$O_6 = O_0$};
		\draw[color=qqqqff] (0.6,0.8) node {$O_1$};
		\draw[color=qqqqff] (-0.6,0.9) node {$O_2$};
		\draw[color=qqqqff] (-1,-0.1) node {$O_3$};
		\draw[color=qqqqff] (-0.6,-0.9) node {$O_4$};
		\draw[color=qqqqff] (1,-0.8) node {$O_5$};
		\draw[color=black] (-0.7,0.65) node {$R$};
		\draw[color=black] (-0.4,0.4) node {$\frac{3R}{4}$};
		\draw[color=xfqqff] (0.3709687064032512,-0.24086448830293206) node {$\omega^*$};
		\end{scriptsize}
		\end{tikzpicture}
		\caption{The sets $O_i$ in blue contained inside red cones around rays parallel to $\tilde{\omega}_i$. Outside $B(0,R)$ we use the same approach as Lemma~\ref{Reptiles}. Inside $B(0,3R/4)$ we use a linear map. In the annulus we interpolate by first squashing onto rings and then rotating.}\label{Fig:Setup}
	\end{figure}
	%}

	\begin{lemma}[Approximation near vertices]\label{America}
		Let $Q_1, Q_2,\dots Q_N:\er^2\to\er^2$ be quadratic mappings. Let $0\leq \omega_0< \omega_1< \dots< \omega_{N-1} < \omega_N = \omega_0+ 2\pi < 4\pi$ and let $\tilde{\omega}_i = [\cos\omega_i, \sin\omega_i]\in \S$ be angles ordered anti-clockwise around $\S$ and call 
		$$
		\omega^* = \min\{ \tfrac{\pi}{8}, \omega_{i+1} - \omega_i; i=0,\dots N  \}.
		$$
		Call $\tilde{\omega}_i^{\bot} =  [-\sin\omega_i, \cos\omega_i] \in \S$ the vector anti-clockwise perpendicular to $\tilde{\omega}_i$. Let $f:B(0,\rho_0)\to\er^2$ be the map defined by 
		$$
		f(t\cos\theta, t\sin\theta) = Q_i(t\cos\theta, t\sin\theta)
		\text{ for all }0\leq t\leq \rho_0\text{ and all }\omega_{i-1} \leq \theta \leq \omega_{i}. 
		$$
		Further assume that this $f$ is a homeomorphism and $\det DQ_i \geq d >0$ on $B(0,\rho_0)$. Let $L$ and $M$ denote positive numbers such that $|DQ_i|\leq L$ on $B(0,\rho_0)$ and $|D^2Q_i|\leq M$. For every $\rho_1, \rho_2 \dots, \rho_N$ and every $R$ such that
		\eqn{defR}
		$$
			0<R<\tfrac{1}{2}\min\{\rho_i, i=1,\dots, N\}<
			\tfrac{1}{2}\min\Bigl\{\rho_0, \frac{\min\{d,d^2\}}{1000 (M+1)(L+1)^4},
		\frac{1}{320}\frac{d^2}{ML^3},\frac{1}{8}\frac{L}{M+1}\Bigr\}
		$$ 
		and every 
		$$
		0<r_i\leq \min\Bigl\{ \frac{d^2R}{432L^4}, \frac{Rd}{1200 L^2}, \frac{\rho_i^2}{2(L+1)}, \frac{R}{2}\tan\frac{\omega^*}{3}\Bigr\}
		\text{ we call }
		\r = (R, \rho_1,\dots , \rho_N, r_1, \dots, r_N).
		$$
		Then for all such $\r$, the rectangles (see Fig. \ref{Fig:Setup})
		$$
		O_i = \bigl\{t\tilde{\omega}_i+ s\tilde{\omega}_i^{\bot}; t\in [\tfrac{R}{2}, \rho_i], s\in [-r_i,r_i] \bigr\}
		$$
		are pairwise disjoint. Further call $\vec{v}_i = \frac{D_{\tilde{\omega}_i}Q_i(\rho_i\tilde{\omega}_i)}{|D_{\tilde{\omega}_i}Q_i(\rho_i\tilde{\omega}_i)|}$ and call $\vec{u}_i \in \S$ the vector clockwise perpendicular to $\vec{v}_i$. Define $\tilde{f}_{\r}$ as
		%$$
		%	\tilde{f}_{\r}(x,y)=f(x,y)
		%$$
		\eqn{deffr}
		$$
		\tilde{f}_{\r}(x,y)=
		\begin{cases}
			f(x,y)\text{ for }[x,y]\notin \bigcup_{i=1}^N O_i,\\
			\bigl[1-\eta\big(\tfrac{1}{r_i}\langle [x,y],-\tilde{\omega}_i^{\bot}\rangle\big)\bigr]Q_i(x,y) 
			+\eta\bigl(\tfrac{1}{r_i}\langle[x,y],-\tilde{\omega}_i^{\bot}\rangle\bigr)Q_{i+1}(x,y)\\
			\phantom{f(x,y)}\text{ for }[x,y]\in O_i\text{ if }\langle D_{-\tilde{\omega}_i^{\bot}} Q_{i+1}(\rho_i\tilde{\omega}_i),\vec{u}_i\rangle \geq \langle D_{-\tilde{\omega}_i^{\bot}} Q_{i}(\rho_i \tilde{\omega}_i),\vec{u}_i\rangle,\\
			\bigl[1-\eta\big(\tfrac{1}{r_i}\langle [x,y],-\tilde{\omega}_i^{\bot}\rangle +1 \bigr)\big]Q_i(x,y) 
			+ \eta\bigl(\tfrac{1}{r_i}\langle [x,y],-\tilde{\omega}_i^{\bot}\rangle+1\bigr)Q_{i+1}(x,y)\\
			\phantom{f(x,y)}\text{ for }[x,y]\in O_i\text{ if }\langle D_{-\tilde{\omega}_i^{\bot}} Q_{i+1}(\rho_i\tilde{\omega}_i),\vec{u}_i\rangle < \langle D_{-\tilde{\omega}_i^{\bot}} Q_{i}(\rho_i \tilde{\omega}_i),\vec{u}_i\rangle.\\
		\end{cases}
		$$
		%for $[x,y]\notin \bigcup_{i=1}^N O_i$, further
		%$$
		%	\tilde{f}_{\r}(x,y)  = \bigl[1-\eta\big(\tfrac{1}{r}\langle(x,y),\tilde{\omega}_i^{\bot}\rangle\big)\bigr]Q_i(x,y) 
		%	+\eta\bigl(\tfrac{1}{r}\langle(x,y),\tilde{\omega}_i^{\bot}\rangle\bigr)Q_{i+1}(x,y)
		%$$
		%for $[x,y]\in O_i$ if $\langle D_{\tilde{\omega}_i^{\bot}} Q_{i+1}(\rho_i\tilde{\omega}_i),\vec{u}_i\rangle \geq \langle D_{\tilde{\omega}_i^{\bot}} Q_{i}(\rho_i \tilde{\omega}_i),\vec{u}_i\rangle$ and finally
	%	$$
		%	\tilde{f}_{\r}(x,y)  = \bigl[1-\eta\big(\tfrac{1}{r}\langle(x,y),\tilde{\omega}_i^{\bot}\rangle +1 \bigr)\big]Q_i(x,y) 
		%	+ \eta\bigl(\tfrac{1}{r}\langle(x,y),\tilde{\omega}_i^{\bot}\rangle+1\bigr)Q_{i+1}(x,y)
		%$$
		%for $[x,y]\in O_i$ if $\langle D_{\tilde{\omega}_i^{\bot}} Q_{i+1}(\rho_i\tilde{\omega}_i),\vec{u}_i\rangle < \langle D_{\tilde{\omega}_i^{\bot}} Q_{i}(\rho_i \tilde{\omega}_i),\vec{u}_i\rangle$.
		
		Then there exists a $\mathcal{C}^{\infty}$ diffeomorphism $g_{\r}$ defined on $B(0,2R)$ with $g_{\r}(x,y) = \tilde{f}_{\r}(x,y)$ for all $R \leq |[x,y]| \leq 2R$ and
		\begin{equation}\label{Houston}
			\int_{B(0,R)}|D^2g_{\r}| < CR
		\end{equation}
		where the constant $C$ depends on $d$, $L$, $M$ and $N$ %the behaviour of $f$ close to the vertex 
		but is independent of $R$.
	\end{lemma}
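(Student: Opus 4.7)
The plan is to extend $\tilde{f}_{\r}$ (which agrees with $g_{\r}$ on the annulus $R \leq |[x,y]| \leq 2R$) inward by building $g_{\r}$ on $B(0, R)$ as the union of three pieces: a suitably smoothed, radially $1$-homogeneous reference map inside $B(0, 3R/4)$, a polar-coordinate interpolation in the annulus $B(0,R) \setminus B(0,3R/4)$, and the given $\tilde{f}_{\r}$ outside. Because $\tilde{f}_{\r}$ is already $C^\infty$ on the annulus $R \leq |[x,y]| \leq 2R$ by the construction of Lemma~\ref{Reptiles} applied on each $O_i$, I only need to handle the interior region while preserving smoothness and injectivity across $\partial B(0, R)$.

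For the reference core inside $B(0, 3R/4)$, I would start from the piecewise linear map $h$ of Lemma~\ref{GirlsNames}, $h(t\tilde{\theta}) = t D_{\tilde{\theta}} Q_i(0,0)$ on each angular sector $\omega_{i-1} \leq \theta \leq \omega_i$, and smooth its gradient jumps across each ray $\tilde{\omega}_i \er^+$ using the convex-combination device of Lemma~\ref{Reptiles} adapted to narrow angular strips. This yields a $C^\infty$ diffeomorphism $h^\ast$ on $B(0, 3R/4)$, and an estimate analogous to \eqref{Iguana} shows that $\int_{B(0, 3R/4)} |D^2 h^\ast| = O(R)$, since the smoothing strips have area of order $R r_i$ and the gradient jumps are fixed constants depending only on the $Q_j$.

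In the annulus $3R/4 \leq t \leq R$ I interpolate in polar coordinates. Write $h^\ast(t\tilde{\theta}) = \R_h(t,\theta) \phi_h(t,\theta)$ and $\tilde{f}_{\r}(t\tilde{\theta}) = \R_f(t,\theta) \phi_f(t,\theta)$ with $\phi_h, \phi_f \in \S$. From \eqref{Becca} and \eqref{Dolly}, $|\tilde{f}_{\r} - h| \leq \tfrac{M}{2} t^2$ and $|f| \geq \tfrac{d}{2L} t$, which together bound $|\phi_f - \phi_h|$ and $|\R_f - \R_h|/\R_f$ by $O(MLR/d)$, small by \eqref{defR}. Letting $\lambda(t) = \eta\bigl(\tfrac{4(t - 3R/4)}{R}\bigr)$, I first \emph{squash onto rings} by defining $\R(t,\theta) = (1-\lambda(t))\R_h(t,\theta) + \lambda(t) \R_f(t,\theta)$, and then \emph{rotate} by taking $\phi(t,\theta)$ to be the shortest-arc interpolation in $\S$ between $\phi_h$ and $\phi_f$ governed by $\lambda$; the map in the annulus is then $g_{\r}(t\tilde{\theta}) = \R(t,\theta) \phi(t,\theta)$. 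The flatness of $\eta$ at the endpoints of $[0,1]$ guarantees a $C^\infty$ match at $t = 3R/4$ and $t = R$.

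To verify the diffeomorphism property I would compute $J_{g_{\r}}$ in polar coordinates, using Lemma~\ref{Partials} to express the angular derivative of $\phi_f$ in terms of $D_{\tilde{\theta}^{\bot}} f$; the closeness of $(\R_h, \phi_h)$ to $(\R_f, \phi_f)$ together with \eqref{defR} keeps $J_{g_{\r}} > 0$, and injectivity on $\partial B(0, R)$ together with the positive Jacobian yields global injectivity by the standard argument used at the end of Lemma~\ref{Reptiles}. For the second-derivative bound, in the annulus the cutoff $\lambda$ produces $|\lambda'| \sim R^{-1}$ acting against differences of size $R$ and $|\lambda''| \sim R^{-2}$ acting against differences of size $R^2$, whence $|D^2 g_{\r}| \leq C/R$ on an area $O(R^2)$, giving $CR$; adding the $O(R)$ contribution from $h^\ast$ yields \eqref{Houston}. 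The main technical obstacle will be keeping $J_{g_{\r}} > 0$ inside the annulus, since the shortest-arc interpolation of $\phi$ couples the angular and radial components of $Dg_{\r}$ in a nontrivial way; the careful bookkeeping will rely on \eqref{Simona} and the lower bound $J_f \geq d$ to keep the determinant strictly positive uniformly through the interpolation.
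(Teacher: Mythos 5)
Your overall architecture — smooth reference map on the inner region, polar ``squash onto rings then rotate'' interpolation in the transition annulus, then $\tilde{f}_{\r}$ outside — matches the paper's Step~4, and you correctly anticipate that the hard estimates are $\partial_t \R_f > 0$ and $\partial_\theta \hat\phi_f > 0$ (the paper's Steps~2 and~3). However, there is a genuine gap in your choice of reference map.

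You propose to take, on $B(0, 3R/4)$, the piecewise-linear map $h(t\tilde\theta) = t\, D_{\tilde\theta} Q_i(0,0)$ and smooth its gradient jumps across the rays by a convex-combination cutoff in narrow \emph{angular} strips, obtaining what you call a ``$C^\infty$ diffeomorphism $h^*$'' that is ``radially $1$-homogeneous.'' Those two properties are incompatible unless $h^*$ is genuinely linear. Any angular smoothing of $h$ produces a map that remains positively $1$-homogeneous, and a $1$-homogeneous map that is not linear cannot be $C^1$ at the origin: the directional derivatives at $0$ still depend on the direction, so $Dh^*(0,0)$ does not exist. Concretely, if the angular cutoff is $\mu(\theta)$, then in Cartesian coordinates $|D\mu| \sim 1/t$ and $|D^2\mu| \sim 1/t^2$ near $0$, so $Dh^*$ has a genuine singularity at the origin, not merely a jump along the rays. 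Thus your $g_{\r}$ would not be smooth at $0$, which is precisely the point the vertex lemma exists to handle. The paper avoids this by taking a \emph{true} linear map $[x,y]\mapsto \lambda[x,y]$ (with $\lambda = \tfrac{d}{4L}$ chosen small enough that $\lambda t < \R_f(t\tilde\theta)$ on the transition annulus) on $B(0,\tfrac{6}{8}R)$, and then doing the angular rotation on $B(0,\tfrac{7}{8}R)\setminus B(0,\tfrac{6}{8}R)$ and the radial dilation on $B(0,R)\setminus B(0,\tfrac{7}{8}R)$ separately. Your interpolation of $\R$ and $\phi$ simultaneously on the same annulus is not fatal, but the decoupling makes the monotonicity checks cleaner. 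To repair your argument, replace $h^*$ by $\lambda[x,y]$ on a smaller inner ball, and then the rest of the plan — Lemma~\ref{Partials} to control $\partial_\theta \hat\phi_f$, \eqref{Becca}/\eqref{Dolly} to bound $|\phi_f - \phi_h|$, and the $O(R)$ integration of $|D^2 g_{\r}|\lesssim 1/R$ over the annulus — goes through along the lines you sketched.
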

	\begin{proof}
		Without loss of generality we may assume that $f(0,0) = [0,0]$.
		
		\step{1}{Proving that $O_i$ are pair-wise disjoint}{EastCoast}
		
		The first claim we prove is that $O_i\cap O_j = \emptyset$ for any $1\leq i < j \leq N$. On the one hand we have that $r_i \leq \tfrac{R}{2}\tan\tfrac{\omega^*}{3}$ and on the other hand we have that 
		$$
		\min\{|[x,y]|; [x,y]\in O_i\} = \tfrac{1}{2}R. 
		$$
		Therefore $O_i$ lies inside a cone whose axis goes through $\omega_i$ and the angle at the apex is $\tfrac{2}{3}\omega^*$. These cones are pairwise disjoint and therefore so are $O_i$ (see Fig. \ref{Fig:Setup}). 
		
		From $r_i \leq \tfrac{R}{2}$ we get $\sqrt{\tfrac{R^2}{4} + \tfrac{R^2}{4}} < \tfrac{3R}{4}$ and hence 
		\begin{equation}
			\big\{\tfrac{R}{2}\tilde{\omega}_i + s\tilde{\omega}_i^{\bot}; s\in [-r_i,r_i] \big\} \subset B(0,\tfrac{3R}{4}).
		\end{equation}
		It follows that this inner edge of $O_i$ (where $\tilde{f}_{\r}$ is discontinuous) is a subset of $B(0,\tfrac{3R}{4})$ and thus we can use
		 Lemma~\ref{Reptiles} to conclude that $\tilde{f}_{\r}$ is a diffeomorphism on $B(0,2R) \setminus B(0,\tfrac{3}{4}R)$ since \eqref{deffr} agrees with rotated and translated version of \eqref{RozsekniTo} there (our $r_i$ and $\rho_i$ play the role of $r$ and $\rho$ in Lemma~\ref{Reptiles}). Note that $d$, $L$ and $M$ play the same role as in Lemma~\ref{Reptiles} and that \eqref{defR} verifies \eqref{defrho}. In the following computation we will use some estimates from Lemma~\ref{Reptiles}.

		We have shown that $\tilde{f}_{\r}$ is smooth for $\tfrac{3R}{4} \leq |[x,y]|\leq 2R <\min\{\rho_i; i=1,2,\dots, N\}$.

		\step{2}{Proving $\langle \frac{\partial}{\partial\theta}\phi_{f}(t\cos\theta, t\sin\theta)  , \phi^{\bot}_{f}(t\cos\theta, t\sin\theta)  \rangle \geq C>0$}{MidWest}
		
		Now we express $\tilde{f}_{\r}$ in polar coordinates in the image, i.e. we define the pair of functions $\R_f:B(0,2R)\to[0, \infty)$ as $\R_f(x,y) = |\tilde{f}_{\r}(x,y) |$ and $\phi_f:B(0,2R)\setminus\{[0,0]\}\to\S \subset \er^2$ as $\phi_f(x,y) = \tfrac{\tilde{f}_{\r}(x,y)}{|\tilde{f}_{\r}(x,y)|}$. Then
		$$
			\tilde{f}_{\r}(x,y) =  \R_f(x, y)\phi_f(x,y)\text{ on }B(0,2R). 
		$$
		 Since $\tilde{f}_{\r}$ is $\mathcal{C}^{\infty}$ smooth on $B(0,2R)\setminus B(0,\tfrac{3}{4}R)$ and $|\tilde{f}_{\r}(x,y)| = 0$ if and only if $[x,y] = [0,0]$, we have that $\R_f$ and $\phi_f$ are $\mathcal{C}^{\infty}$ smooth there. Further we define 
		 $$
		 \phi_f^{\bot}(x,y) = \bigl[-(\phi_f(x,y))_2,(\phi_f(x,y))_1\bigr]
		 $$ 
		 the $\tfrac{\pi}{2}$ anti-clockwise rotation of $\phi_f$. For brevity call $\tilde{\theta} = [\cos\theta, \sin\theta]$ and $\tilde{\theta}^{\bot} = [-\sin\theta, \cos\theta]$. Our aim is to prove that in $B(0,R)$
		 $$
		 \bigl\langle  D_{\tilde{\theta}^{\bot}}\phi_{f}(t\tilde{\theta})  , \phi^{\bot}_{f}(t\tilde{\theta})  \bigr\rangle
		=\bigl\langle  \frac{\partial}{\partial\theta}\phi_{f}(t\tilde{\theta})  , \phi^{\bot}_{f}(t\tilde{\theta})  \bigr\rangle
		\geq C>0
		%\text{ and } \frac{\partial}{\partial \theta} \phi_{f}(t\cos\theta,t\sin\theta) \geq C>0
		.
		 $$

		\step{2.A}{The $[x,y]\notin O_i$ case}{Illinois}
		
		By Lemma~\ref{GirlsNames} the map 
		$$
		h(t\cos\theta, t\sin\theta) = DQ_i(0,0)(t\cos\theta, t\sin\theta)\text{ for }
		t\in [0,\infty)\text{ and }
		\omega_{i-1}\leq \theta \leq \omega_i
		$$ 
		is a piecewise linear homeomorphism. 
		From Lemma~\ref{Partials} we have
		\begin{equation}\label{Denver}
				\langle \frac{\partial}{\partial\theta}\phi_f(t\cos\theta, t\sin\theta) , \phi_f^{\bot}(t\tilde{\theta}) \rangle = \frac{t}{\R_f(t\tilde{\theta})}\langle D_{\tilde{\theta}^{\bot}}\tilde{f}_{\r}(t\tilde{\theta}), \phi_f^{\bot}(t\tilde{\theta}) \rangle.
		\end{equation}
		We call 
		$$
		\phi_h(t\tilde{\theta}) = \tfrac{h(t\tilde{\theta})}{|h(t\tilde{\theta})|}
		\text{ and }
		\phi_h^{\bot}(x,y) = \bigl[-(\phi_h(x,y))_2,(\phi_h(x,y))_1\bigr] 
		$$ 
		the $\tfrac{\pi}{2}$ anti-clockwise rotation of $\phi_h$. For brevity we use the notation $[x,y] = t\tilde{\theta}$, where $t = |[x,y]|$ and $\tilde{\theta} = \tfrac{[x,y]}{|[x,y]|}$. 
	  By linearity $\phi_h$ depends only on $\theta$ and not $t$ and hence $D_{\tilde{\theta}}(\phi_h(x,y))=0$ which implies 
		$$
		D_{\tilde{\theta}}h(x,y)%=D_{\tilde{\theta}}\bigl(|h(x,y)|\phi_h(x,y)\bigr)
		=D_{\tilde{\theta}}\bigl(|h(x,y)|\bigr)\phi_h(x,y)
		+|h(x,y)|D_{\tilde{\theta}}\bigl(\phi_h(x,y)\bigr)=D_{\tilde{\theta}}\bigl(|h(x,y)|\bigr)\phi_h(x,y).
		$$
		It follows that 
		$$
				0< \bigl\langle D_{\tilde{\theta}}h(x,y), \phi_h(x,y)\bigl\rangle \leq L \quad \text{and} \quad 
				\bigl\langle D_{\tilde{\theta}}h(x,y), \phi_h^{\bot}(x,y)\bigr\rangle = 0.
		$$
		Using \eqref{JackOfAllTrades} we obtain 
		$$
				d\leq J_h(x,y)= \bigl\langle D_{\tilde{\theta}}h(x,y), \phi_h(x,y)\bigr\rangle 
				\bigl\langle D_{\tilde{\theta}^{\bot}} h(x,y), \phi_h^{\bot}(x,y) \bigr\rangle
		$$
		and together with $| D_{w} h(x,y)| \leq L$ for all $[x,y]\in B(0,R)$ and all $w\in \S$ this implies 
		\begin{equation}\label{Dallas}
				\frac{d}{L} \leq \langle D_{\tilde{\theta}}h(x,y), \phi_h(x,y)\rangle \leq L \quad  \text{and} \quad
				\frac{d}{L}\leq \langle D_{\tilde{\theta}^{\bot}} h(x,y), \phi_h^{\bot}(x,y) \rangle \leq L.
		\end{equation}
		Therefore $\tfrac{d}{L}|[x,y]|\leq |h(x,y)| \leq L|[x,y]|$ and $|[x,y]|<R<\frac{1}{8}\min\{L,\frac{d}{L}\}$ gives (see \eqref{Becca}) that 
		\eqn{Kansas City}
		$$
		\frac{15d}{L16}|[x,y]| \leq |f(x,y)| \leq \frac{17L}{16}|[x,y]|,\text{ i.e. }
			\frac{16}{17L} \leq \frac{|[x,y]|}{\R_f(x,y)} \leq \frac{16L}{15d}.
		$$
		%Therefore in $B(0,R) \setminus \bigcup_{i=1}^{N}O_i$ we have the following, $\tilde{f}_{\r} = f$ and so $\R_f \leq \tfrac{17}{16}Lt$ and $|D\tilde{f}_{\r} - Dh|< Mt$. 
		Further for all $|[x,y]| = t\leq R \leq \tfrac{\min\{d,d^2\}}{1000 (M+1)(L+1)^3}$ we have using \eqref{elementary} and \eqref{Becca}  
		\eqn{rozdiluhlu}
		$$
		\begin{aligned}
		\bigl|\phi_h(t\tilde{\theta}) -\phi_f(t\tilde{\theta})\bigr|	&=\Bigl|\frac{h(t\tilde{\theta})}{|h(t\tilde{\theta})|}-\frac{f(t\tilde{\theta})}{|f(t\tilde{\theta})|}\Bigr|\leq \frac{|f(t\tilde{\theta})-h(t\tilde{\theta})|}{|h(t\tilde{\theta})|}2\\
			&
			\leq \frac{\tfrac{1}{2}Mt^2}{\frac{d}{L}t  }2  < \frac{1}{1000}\min\bigl\{1,\frac{d}{L^2}\bigr\}.\\
		\end{aligned}
		$$
		Therefore, using \eqref{Dallas}, we get
		$$
			\frac{99d}{100L}\leq \bigl\langle D_{\tilde{\theta}^{\bot}} h(x,y), \phi_f^{\bot}(x,y) \bigr\rangle 
			\leq \frac{101}{100}L.
		$$
		In this case we estimate for all $0<|[x,y]| = t\leq R \leq \tfrac{d^2}{1000 (M+1)(L+1)^3}$ using \eqref{Denver}, \eqref{Kansas City} and \eqref{Ella} to get
		\begin{equation}\label{Nashville}
			\begin{aligned}
				\bigl\langle \frac{\partial}{\partial\theta}\phi_f(t\cos\theta, t\sin\theta) , \phi_f^{\bot}(t\tilde{\theta}) \bigr\rangle 
				= &\frac{t}{\R_f(t\tilde{\theta})}\bigl\langle D_{\tilde{\theta}^{\bot}}\tilde{f}_{\r}(t\tilde{\theta}), \phi_f^{\bot}(t\tilde{\theta}) \bigr\rangle \\
				\geq& \frac{16}{17L}\langle D_{\tilde{\theta}^{\bot}} h(t\tilde{\theta}), \phi_f^{\bot}(t\tilde{\theta}) \rangle  -\frac{16L}{15d}\bigl|\langle D_{\tilde{\theta}^{\bot}} h(t\tilde{\theta}) - D_{\tilde{\theta}^{\bot}}\tilde{f}_{\r}(t\tilde{\theta}), \phi_f^{\bot}(t\tilde{\theta}) \rangle\bigr|\\
				\geq & \frac{16}{17L}\langle D_{\tilde{\theta}^{\bot}} h(t\tilde{\theta}), \phi_f^{\bot}(t\tilde{\theta}) \rangle - \frac{16L}{15d}Mt\\
				\geq &\frac{16}{17L}\frac{99d}{100L} - \frac{d}{800L^2}\\
				\geq & \frac{9d}{10L^2}.
			\end{aligned}
		\end{equation}
		
		\step{2.B}{The $[x,y]\in O_i$ case}{Idaho}

		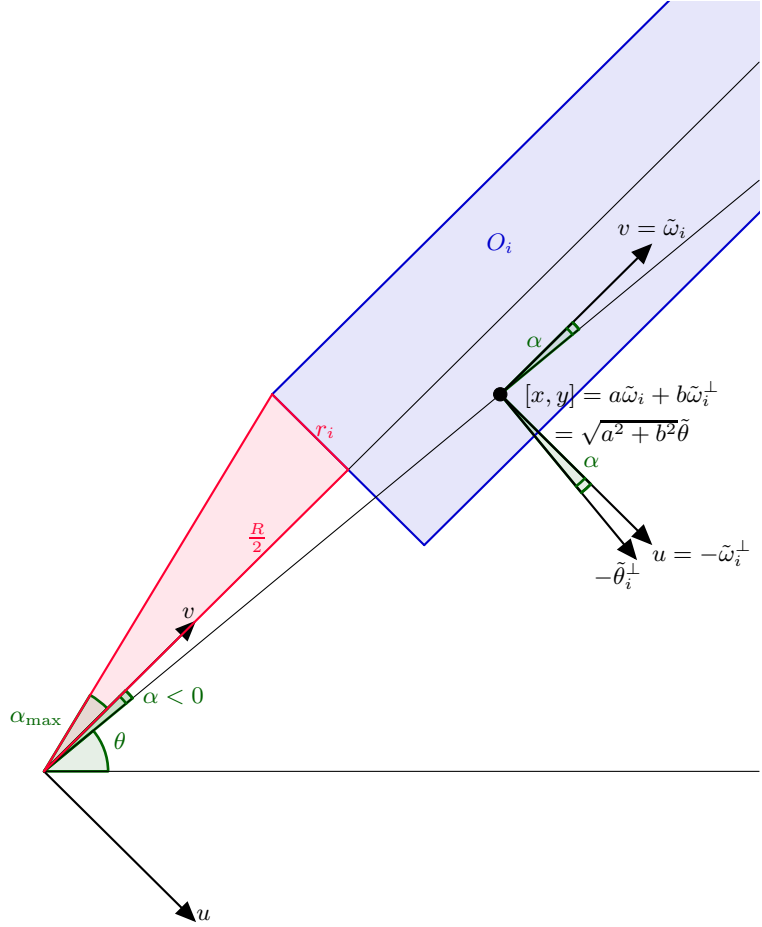
\begin{figure}
			\centering
		\definecolor{qqqqcc}{rgb}{0.,0.,0.8}
		\definecolor{qqwuqq}{rgb}{0.,0.39215686274509803,0.}
		\definecolor{ududff}{rgb}{0.30196078431372547,0.30196078431372547,1.}
		\definecolor{ffqqtt}{rgb}{1.,0.,0.2}
		\begin{tikzpicture}[line cap=round,line join=round,>=triangle 45,x=1cm,y=1cm]
		\clip(-1.1103711670922258,-2.9619179222925074) rectangle (9.404510668168342,10.215385916126914);
		\fill[line width=2.pt,color=qqqqcc,fill=qqqqcc,fill opacity=0.10000000149011612] (3.,5.) -- (9.,11.) -- (11.,9.) -- (5.,3.) -- cycle;
		\draw [shift={(0.,0.)},line width=1.pt,color=qqwuqq,fill=qqwuqq,fill opacity=0.10000000149011612] (0,0) -- (0.:0.8425386085945967) arc (0.:39.8055710922652:0.8425386085945967) -- cycle;
		\draw [shift={(0.,0.)},line width=1.pt,color=qqwuqq,fill=qqwuqq,fill opacity=0.20000000298023224] (0,0) -- (39.8055710922652:1.516569495470274) arc (39.8055710922652:45.:1.516569495470274) -- cycle;
		\draw [shift={(6.,5.)},line width=1.pt,color=qqwuqq,fill=qqwuqq,fill opacity=0.10000000149011612] (0,0) -- (-50.71059313749966:1.6850772171891935) arc (-50.71059313749966:-45.:1.6850772171891935) -- cycle;
		\fill[line width=2.pt,color=ffqqtt,fill=ffqqtt,fill opacity=0.10000000149011612] (0.,0.) -- (4.,4.) -- (3.,5.) -- cycle;
		\draw [shift={(0.,0.)},line width=1.pt,color=qqwuqq,fill=qqwuqq,fill opacity=0.10000000149011612] (0,0) -- (45.:1.1795540520324355) arc (45.:59.03624346792648:1.1795540520324355) -- cycle;
		\draw [shift={(6.,5.)},line width=1.pt,color=qqwuqq,fill=qqwuqq,fill opacity=0.10000000149011612] (0,0) -- (39.80557109226519:1.3480617737513547) arc (39.80557109226519:45.:1.3480617737513547) -- cycle;
		\draw [line width=0.8pt,color=qqqqcc] (3.,5.)-- (9.,11.);
		\draw [line width=0.8pt,color=qqqqcc] (9.,11.)-- (11.,9.);
		\draw [line width=0.8pt,color=qqqqcc] (11.,9.)-- (5.,3.);
		\draw [line width=0.8pt,color=qqqqcc] (5.,3.)-- (3.,5.);
		\draw [->,line width=0.8pt] (0.,0.) -- (2.,-2.);
		\draw [->,line width=0.8pt] (0.,0.) -- (2.,2.);
		\draw [line width=0.4pt,domain=0.0:9.404510668168342] plot(\x,{(-0.-0.*\x)/7.});
		\draw [line width=0.4pt,domain=0.0:9.404510668168342] plot(\x,{(-0.--5.*\x)/6.});
		\draw [line width=0.4pt,domain=0.0:9.404510668168342] plot(\x,{(-0.--2.*\x)/2.});
		\draw [shift={(0.,0.)},line width=1.pt,color=qqwuqq] (39.8055710922652:1.516569495470274) arc (39.8055710922652:45.:1.516569495470274);
		\draw [shift={(0.,0.)},line width=1.pt,color=qqwuqq] (39.8055710922652:1.4070394763529763) arc (39.8055710922652:45.:1.4070394763529763);
		\draw [->,line width=0.8pt] (6.,5.) -- (8.,3.);
		\draw [->,line width=0.8pt] (6.,5.) -- (8.,7.);
		\draw [->,line width=0.8pt] (6.,5.) -- (7.8,2.8);
		\draw [shift={(6.,5.)},line width=1.pt,color=qqwuqq] (-50.71059313749966:1.6850772171891935) arc (-50.71059313749966:-45.:1.6850772171891935);
		\draw [shift={(6.,5.)},line width=1.pt,color=qqwuqq] (-50.71059313749966:1.5755471980718958) arc (-50.71059313749966:-45.:1.5755471980718958);
		\draw [line width=0.8pt,color=ffqqtt] (0.,0.)-- (4.,4.);
		\draw [line width=0.8pt,color=ffqqtt] (4.,4.)-- (3.,5.);
		\draw [line width=0.8pt,color=ffqqtt] (3.,5.)-- (0.,0.);
		\draw [shift={(6.,5.)},line width=1.pt,color=qqwuqq] (39.80557109226519:1.3480617737513547) arc (39.80557109226519:45.:1.3480617737513547);
		\draw [shift={(6.,5.)},line width=1.pt,color=qqwuqq] (39.80557109226519:1.238531754634057) arc (39.80557109226519:45.:1.238531754634057);
		\begin{scriptsize}
		\draw[color=black] (2.1,-1.9) node {$u$};
		\draw[color=black] (1.9,2.1) node {$v$};
		\draw [fill=black] (6.,5.) circle (2.5pt);
		\draw[color=black] (7.6,5) node {$[x,y] = a\tilde{\omega}_i + b\tilde{\omega}^{\bot}_i$};
		\draw[color=black] (7.6,4.5) node {$ = \sqrt{a^2+b^2}\tilde{\theta}$};
		\draw[color=qqwuqq] (1,0.4) node {$\theta$};
		\draw[color=qqwuqq] (1.7,1) node {$\alpha<0$};
		\draw[color=black] (8.65,2.9) node {$u = -\tilde{\omega}^{\bot}_i$};
		\draw[color=black] (8,7.2) node {$v = \tilde{\omega}_i$};
		\draw[color=black] (7.55,2.6) node {$-\tilde{\theta}^{\bot}_i$};
		\draw[color=qqwuqq] (7.2,4.1) node {$\alpha$};
		\draw[color=ffqqtt] (2.8,3.1) node {$\frac{R}{2}$};
		\draw[color=ffqqtt] (3.7,4.5) node {$r_i$};
		\draw[color=qqwuqq] (-0.1,0.7) node {$\alpha_{\text{max}}$};
		\draw[color=qqwuqq] (6.45,5.7) node {$\alpha$};
		\draw[color=qqqqcc] (6,7) node {$O_i$};
		\end{scriptsize}
		\end{tikzpicture}
		\caption{Position of vectors and points in $O_i$.}\label{Fig:Prime}
	\end{figure}

		In the case $[x,y]\in O_i$ we calculate as follows. 
		Let $u, v\in \S$ satisfy $u\bot v$ and set $w=u\cos\alpha +v\sin\alpha $ for some $\alpha \in [-\pi/2,\pi/2]$, then 
		$w \in \S$ and $\alpha$ is the anti-clockwise oriented angle between $u$ and $w$. 
		By linearity we obtain  
		\begin{equation}\label{Minnesota}
			\begin{aligned}
				\langle D_w \tilde{f}_r , \phi_f^{\bot}\rangle =
				& \cos\alpha\langle D_u \tilde{f}_r , \phi_f^{\bot}\rangle+ \sin\alpha\langle  D_v \tilde{f}_r , \phi_f^{\bot}\rangle\\
				=&\cos\alpha\langle D_u \tilde{f}_r , \vec{u}_i\rangle\langle \phi_f^{\bot}, \vec{u}_i\rangle 
				+ \cos\alpha\langle D_u \tilde{f}_r , \vec{v}_i\rangle\langle \phi_f^{\bot}, \vec{v}_i\rangle 
				+\sin\alpha\langle D_v \tilde{f}_r , \phi_f^{\bot}\rangle.
			\end{aligned}
		\end{equation}
		Given that $[x,y]\in O_i\cap B(0,2R)$, then we can uniquely express 
		$$
		[x,y] = a\tilde{\omega}_i + b\tilde{\omega}_i^{\bot}\text{ for }
		a\in [\tfrac{1}{2}R, 2R]\text{ and }b\in[-r_i,r_i]. 
		$$
		Further there exists a unique $\theta \in[0,2\pi)$ and using our standard notation that $\tilde{\theta} = [\cos\theta,\sin\theta]$ and $\tilde{\theta}^{\bot} = [-\sin\theta, +\cos\theta]$ we have $[x,y] = \sqrt{a^2+b^2}\tilde{\theta}$. 
		We plan to use \eqref{Minnesota}, with $w = -\tilde{\theta}^{\bot}$, $u = - \tilde{\omega}_i^{\bot}$ and $v = \tilde{\omega}_i$. The situation is depicted in Fig.~\ref{Fig:Prime}.
		The angle between $u$ and $w$ is the same as the angle between $v$ and $\tilde{\theta}$ and using 
 $r_i\leq \tfrac{d^2R}{432L^4}$ and $d\leq L^2$ we calculate that (see Fig~\ref{Fig:Prime})
\eqn{alphasize}
		$$
		|\sin\alpha| \leq |\tan\alpha| \leq \frac{r_i}{\frac{R}{2}} \leq \frac{d^2}{216L^4}\leq \frac{d}{216L^2}\leq \frac{1}{216}
		\text{ implying }\cos\alpha\geq\frac{9}{10}.
		$$ 
		Using also \eqref{MuskTurtle} ($|D\tilde{f}_{\r}| \leq 8L$) in \eqref{Minnesota} we get
		\begin{equation}\label{Boise}
			\begin{aligned}
				\langle D_{\tilde{\theta}^{\bot}} \tilde{f}_r (t \tilde{\theta}), \phi_f^{\bot}(t \tilde{\theta})\rangle \geq 
				& \frac{9}{10}\langle D_{\tilde{\omega}_i^{\bot}} \tilde{f}_r(t \tilde{\theta}) , \vec{u}_i\rangle \langle \phi_f^{\bot}(t \tilde{\theta}), \vec{u}_i\rangle 
				- \bigl|\langle D_{\tilde{\omega}_i^{\bot}} \tilde{f}_r (t \tilde{\theta}), \vec{v}_i\rangle\bigr|
				\langle \phi_f^{\bot}(t \tilde{\theta}), \vec{v}_i\rangle \\
				&-\frac{d}{216 L^2}\bigl|\langle D_{\tilde{\omega}_i} \tilde{f}_r (t \tilde{\theta}), \phi_f^{\bot}(t \tilde{\theta})\rangle\bigr|\\
				\geq &\frac{9}{10}\langle D_{\tilde{\omega}_i^{\bot}} \tilde{f}_r(t \tilde{\theta}) , \vec{u}_i\rangle\langle \phi_f^{\bot}(t \tilde{\theta}), \vec{u}_i\rangle  
				- 8L\langle \phi_f^{\bot}(t \tilde{\theta}), \vec{v}_i\rangle -\frac{d}{216L^2}8L.
			\end{aligned}
		\end{equation}		
		By \eqref{Gecco} we have that $\langle D_{-\tilde{\omega}_i^{\bot}} \tilde{f}_r(t \tilde{\theta}) , \vec{u}_i\rangle \geq \tfrac{9d}{10L}$ (note that in order to apply \eqref{Gecco} we take $-\tilde{\omega}_i^{\bot}$ as the clockwise rotation of $\tilde{\omega}_i$ because also $[1,0]$ is the clockwise rotation of $[0.1]$). Note that $\phi_f^{\bot}$ is anti-clockwise perpendicular
		to $\phi_f$ but $\vec{u}_i$ is clockwise perpendicular to $\vec{v}_i$ and hence $\langle \phi_f^{\bot}(t \tilde{\theta}), \vec{u}_i\rangle$ is negative. Combining the two previous facts we get that $\langle D_{\tilde{\omega}_i^{\bot}} \tilde{f}_r(t \tilde{\theta}) , \vec{u}_i\rangle\langle \phi_f^{\bot}(t \tilde{\theta}), \vec{u}_i\rangle \geq \tfrac{9d}{10L}|\langle \phi_f^{\bot}(t \tilde{\theta}), \vec{u}_i\rangle|$. Applying this in \eqref{Boise} we get
		\begin{equation}\label{El Paso}
			\begin{aligned}
				\langle D_{\tilde{\theta}^{\bot}} \tilde{f}_r(t \tilde{\theta}) , \phi_f^{\bot}(t \tilde{\theta})\rangle 
				\geq &\frac{81 d}{100L}|\langle \phi_f^{\bot}(t \tilde{\theta}), \vec{u}_i\rangle|  - 8L\langle \phi_f^{\bot}(t \tilde{\theta}), \vec{v}_i\rangle -\frac{d}{27 L}.
			\end{aligned}
		\end{equation}

		The factors $\langle \phi_f^{\bot}, \vec{u}_i\rangle$ and $\langle\phi_f^{\bot}, \vec{v}_i\rangle$ are a question of the geometry of $\tilde{f}_{\r}(O_i)$. 
		%From \eqref{deffr} we obtain that $\tilde{f}_{\r} = f$ on the ray $\tilde{\omega}_i\er^+$ since for $[x,y]=t\tilde{\omega}_i$ we have $\langle[x,y], \tilde{\omega}_i^{\bot} \rangle=0$. 
			We express 
			$$
			[x,y] = a\tilde{\omega}_i + b\tilde{\omega}_i^{\bot}\text{ for }\tfrac{1}{2}R\leq a\leq R\text{ and }-r_i \leq b\leq r_i.
				$$
		We use \eqref{BeardedDragon} ($|\tilde{f}_{\r} - f| \leq 2Lr_i\text{ on }O_i$), \eqref{Becca}, \eqref{Andrea}, 	$r_i \leq \tfrac{dR}{432L^2}$, $a^2+b^2\leq 2R^2$ and $R\leq \tfrac{d}{1000ML}$
		%. We express $[x,y] = a\tilde{\omega} + b\tilde{\omega}_i^{\bot}$ for $\tfrac{1}{2}R\leq a\leq R$ and $-r_i \leq b\leq r_i$ and we get that
		and we get 
		\begin{equation}\label{Portland}
			\begin{aligned}
				\bigl|\tilde{f}_{\r}(a\tilde{\omega}_i + b\tilde{\omega}_i^{\bot}) \bigr| 
				\geq& \bigl|h(a\tilde{\omega}_i) \bigr|-\bigl|h(a\tilde{\omega}_i + b\tilde{\omega}_i^{\bot}) -h(a\tilde{\omega}_i)\bigr|\\
				&-\bigl|\tilde{f}_{\r}(a\tilde{\omega}_i + b\tilde{\omega}_i^{\bot}) -f(a\tilde{\omega}_i + b\tilde{\omega}_i^{\bot})\bigr|
				-\bigl|h(a\tilde{\omega}_i + b\tilde{\omega}_i^{\bot}) -f(a\tilde{\omega}_i + b\tilde{\omega}_i^{\bot})\bigr|\\
				\geq& |h(a\tilde{\omega}_i)|-Lr_i - 2Lr_i - \frac{M}{2}(a^2+b^2) \\
				\geq & a|D_{\tilde{\omega}_i} h(\tilde{\omega}_i)|  -\frac{dR}{16L} - \frac{dR}{16L}\\
				%\geq & \frac{3}{4}R\frac{d}{L}  - \frac{d}{8L}R\\
				\geq &\frac{3d}{8L}R.
			\end{aligned}
		\end{equation}
	By \eqref{MuskTurtle}
		$$
			\bigl|\tilde{f}_{\r}(a\tilde{\omega}_i +b\tilde{\omega}_i^{\bot})  - \tilde{f}_{\r}(a\tilde{\omega}_i) \Bigr| 
			\leq 8Lr_i \leq \frac{d^2}{54 L^3}R.
		$$
		Combining these two facts and calling $\zeta_1$ the angle between $\phi_f(a\tilde{\omega}_i +b\tilde{\omega}_i^{\bot})$ and $\phi_f(a\tilde{\omega}_i)$ we get
		\begin{equation}\label{Phoenix}
				|\tan\zeta_1|\leq \frac{\frac{d^2}{54 L^3}R}{\frac{3d}{8L}R} \leq \frac{d}{20 L^2}.
		\end{equation}

		On the other hand using $\rho < \tfrac{d^2}{1000 (M+1)(L+1)^4}$ and $|D^2Q_i|\leq M$ we have
		$$
			\bigl|D_{\tilde{\omega}_i}Q_i(\rho\tilde{\omega}_i) - D_{\tilde{\omega}_i}Q_i(0,0)\bigr|
			\leq M\rho \leq \frac{d^2}{1000(L+1)^4}.
		$$
		Therefore, because $\vec{v}_i = \tfrac{D_{\tilde{\omega}_i}Q_i(\rho\tilde{\omega}_i)}{|D_{\tilde{\omega}_i}Q_i(\rho\tilde{\omega}_i)|}$ and $|D_{\tilde{\omega}_i}Q_i(t\tilde{\omega}_i)| \geq \tfrac{d}{L}$ (see \eqref{Andrea}) we have analogously to \eqref{rozdiluhlu} that
		\begin{equation}\label{Pensacola}
			\Bigl|\vec{v}_i - \frac{D_{\tilde{\omega}_i}Q_i(0,0)}{|D_{\tilde{\omega}_i}Q_i(0,0)|}\Bigr|
			\leq \frac{d^2}{1000(L+1)^4} \frac{L}{d}2 = \frac{d}{500(L+1)^3}.
		\end{equation}
		From \eqref{deffr} we obtain that $\tilde{f}_{\r} = f$ on the ray $\tilde{\omega}_i\er^+$ since for $[x,y]=t\tilde{\omega}_i$ we have $\langle[x,y], \tilde{\omega}_i^{\bot} \rangle=0$. 
		Hence $\tilde{f}_{\r}$ is smooth along this ray and 
		$$
			\phi_{f}(t\tilde{\omega}_i)=\frac{\tilde{f}_{\r}(t\tilde{\omega}_i)}{|\tilde{f}_{\r}(t\tilde{\omega}_i)|} 
			= \frac{\int_0^t D_{\tilde{\omega}_i} \tilde{f}_{\r}(s\tilde{\omega}_i) ds}{|\int_0^t D_{\tilde{\omega}_i} \tilde{f}_{\r}(s\tilde{\omega}_i) ds|} 
			= \frac{\tfrac{1}{t}\int_0^t D_{\tilde{\omega}_i} Q_i (s\tilde{\omega}_i) ds}
			{|\frac{1}{t}\int_0^t D_{\tilde{\omega}_i} Q_i(s\tilde{\omega}_i) ds|}
		$$
		and so using $t\leq R \leq \tfrac{d^2}{1000(M+1)(L+1)^4}$ using \eqref{elementary}
		\begin{equation}\label{Chicago}
			\Bigl|\phi_{f}(t\tilde{\omega}_i) - \frac{D_{\tilde{\omega}_i}Q_i(0,0)}{|D_{\tilde{\omega}_i}Q_i(0,0)|}\Bigr| 
			\leq \frac{\tfrac{1}{t}\int_0^t|D_{\tilde{\omega}_i} Q_i (s\tilde{\omega}_i) - D_{\tilde{\omega}_i}Q_i(0,0) ds|}{|D_{\tilde{\omega}_i} Q_i(0,0)| }2 \leq \frac{Mt}{\frac{d}{L}}2 \leq \frac{d}{500(L+1)^3}.
		\end{equation}
		Combining \eqref{Pensacola} and \eqref{Chicago} we obtain that the angle between $\phi_f(a\tilde{\omega}_i)$ and $\vec{v}_i$ (call it $\zeta_2$) satisfies 
		$$
		\tan \frac{\zeta_2}{2}\leq \frac{d}{500(L+1)^3}.
		$$
		Call $\zeta_3$ the angle between $\phi_f(a\tilde{\omega}_i + b\tilde{\omega}_i^{\bot})$ and $\vec{v}_i$. From the previous inequality 
		and \eqref{Phoenix} we obtain that $|\zeta_3|\leq |\zeta_1|+|\zeta_2|$ implies 
		$$
		|\sin(\zeta_3)|\leq |\sin\zeta_1|+|\sin \zeta_2|
		\leq |\sin\zeta_1|+2\bigl|\sin\frac{\zeta_2}{2}\bigr|\leq \frac{d}{20L^2}+2\frac{d}{500(L+1)^3}  \leq \frac{d}{15L^2}. 
		$$
		Then also 
		$$
		|\langle \phi_f^{\bot}, \vec{v}_i\rangle| = |\sin(\zeta_3)| \leq \frac{d}{15L^2}
		$$
		and since $\tfrac{d}{L^2}\leq 1$ also $|\langle \phi_f^{\bot}, \vec{u}_i\rangle| = |\cos(\zeta_3)| \geq \tfrac{9}{10}$.
		%Note that $\phi_f^{\bot}$ is anticlockwise perpendicular to $\phi_f$ and $\vec{u}_i$ is clockwise perpendicular to $\vec{v}_i$ and hence $\langle \phi_f^{\bot}, \vec{u}_i\rangle$ is negative. 
		Applying this in \eqref{El Paso} we get
		$$
			\langle D_{\tilde{\theta}^{\bot}} \tilde{f}_r(x,y) , \phi_f^{\bot}(x,y)\rangle \geq \frac{9}{10}\frac{81d}{100L}  - \frac{8d}{15L} -\frac{d}{27L} \geq \frac{d}{10L}
		$$
		for all $[x,y]\in O_i\cap B(0,R)$. Because together \eqref{Portland} and \eqref{MuskTurtle} imply that $\R_f(t)\approx t$ we conclude from the above using \eqref{Denver} that 
		$$
		\langle \frac{\partial}{\partial\theta}\phi_f(t\cos\theta, t\sin\theta) , \phi_f^{\bot}(t\tilde{\theta}) \rangle \geq C. 
		$$
		%$$
		%	\frac{\partial}{\partial \theta} \phi_{f}(t\cos\theta,t\sin\theta) \geq C
		%$$ 
		%also for $[x,y]\in O_i\cap B(0,R)$ and with respect to \eqref{Nashville} $\frac{\partial}{\partial \theta} \phi_{f}(t\cos\theta,t\sin\theta) \geq C$ for all $[x,y] \in B(0,R)$.
		
		\step{3}{Proving that $\tfrac{\partial}{\partial t} \R_f(t\tilde{\theta})\geq C>0$}{Wyoming} 
		
	%	Let us denote $D_{t} \R_f(t\tilde{\theta})=\tfrac{\partial}{\partial t}\R_f(t\tilde{\theta})$. 
		In this section we show that $\tfrac{\partial}{\partial t} \R_f(t\tilde{\theta})>0$ for all $\tfrac{3}{4}R\leq t\leq R$. For $[x,y] = t\tilde{\theta}$, where $t = |[x,y]|$ and $\tilde{\theta} = \frac{[x,y]}{|[x,y]|}$ we consider firstly $t\tilde{\theta} \notin \bigcup_{i=1}^N O_i$ using the following facts. Firstly, for all $w\in \S$, we have $h(tw) = t D_wh(w)$ and $|D_{w}h| \geq \tfrac{d}{L}$. This means that 
		\eqn{uuu}
		$$
		%\frac{\partial}{\partial t}\R_h(tw)=
		\begin{aligned}
		\bigl|D_wh(w)\bigr|=\Bigl|\frac{\partial}{\partial t} h(t w)\Bigr| \geq \frac{d}{L}\text{ and }\\		
		\Bigl\langle \frac{\partial}{\partial t} h(tw) , \phi_h(tw) \Bigr\rangle =
		\Bigl\langle \frac{\partial}{\partial t} \bigl(t D_wh(w)\bigr) , \frac{t D_wh(w)}{|t D_wh(w)|} \Bigr\rangle
		=\Bigl|\frac{\partial}{\partial t} h(t w)\Bigr|
		\text{ for all }w\in \S.\\
		\end{aligned}
		$$ 
		%{\color{red}because $h$ is linear on rays. A linear function $l$ in the variable $t$ on a segment has the form $l(t) = at$. In this specific case $l = \R_h$ on the ray $w\er^+$ then $a = |D_wh(w)|$ which we have established $a\geq \tfrac{d}{L}$. The cooeficient $a$ for linear functions is constant and so $l'(t)= a$ for all $t$. In other words $\tfrac{\partial}{\partial t}\R_h(tw) \geq \tfrac{d}{L}$}. 
	Secondly, because $t\leq R\leq \frac{d}{1000ML}$ and $t\tilde{\theta} \notin \bigcup_{i=1}^N O_i$ we have using \eqref{Becca},
		$$
				\bigl|\tilde{f}_{r}(t\tilde{\theta}) - h(t\tilde{\theta})\bigr|
				= \bigl|f(t\tilde{\theta}) - h(t\tilde{\theta})\bigr|
				\leq \frac{M}{2}t^2
				\leq \frac{dt}{L 2000} 
				\leq \frac{|h(t\tilde{\theta})|}{2000 }.
		$$
		This implies (using the fact that $|\phi_h- \phi_f|$ is less than the arclength between them on $\S$) that
		$$
			|\phi_h- \phi_f| \leq \arctan\frac{1}{1000} \leq \frac{1}{1000}.
		$$
		Finally we obtain using \eqref{Ella} and $t\leq R\leq \frac{d}{1000ML}$
		$$
		|\tfrac{\partial}{\partial t}\tilde{f}_{\r}(t\tilde{\theta}) - \tfrac{\partial}{\partial t} h(t\tilde{\theta})| 
		\leq Mt 
		\leq \frac{d}{1000 L}
		\leq \frac{|\tfrac{\partial}{\partial t} h(t\tilde{\theta})|}{1000}.
		$$ 
		%In order to conclude that $\tfrac{\partial}{\partial t} \R_f(t\tilde{\theta})>0$ we need to prove that $\langle \tfrac{\partial}{\partial t}\tilde{f}_{\r}(t\tilde{\theta}), \phi_f(t\tilde{\theta})\rangle>0$. We have already concluded that $\tfrac{\partial}{\partial t}\R_h(t\tilde{\theta}) = |\tfrac{\partial}{\partial t} h(t\tilde{\theta})| = \langle \tfrac{\partial}{\partial t} h(t\tilde{\theta}) , \phi_h(t\tilde{\theta}) \rangle \geq \tfrac{d}{L}$ and so
		We estimate with the help of \eqref{uuu}
		$$
			\begin{aligned}
				\langle \tfrac{\partial}{\partial t}\tilde{f}_{\r}(t\tilde{\theta}), \phi_f(t\tilde{\theta})\rangle
				\geq & \langle \tfrac{\partial}{\partial t} h(t\tilde{\theta}) , \phi_h(t\tilde{\theta}) \rangle - |\langle \tfrac{\partial}{\partial t}\tilde{f}_{\r}(t\tilde{\theta}) - \tfrac{\partial}{\partial t} h(t\tilde{\theta}), \phi_f(t\tilde{\theta})\rangle| \\
				&\quad - |\langle \tfrac{\partial}{\partial t} h(t\tilde{\theta}), \phi_f(t\tilde{\theta}) - \phi_h(t\tilde{\theta})\rangle|\\
				\geq & |\tfrac{\partial}{\partial t} h(t\tilde{\theta})| - \frac{|\tfrac{\partial}{\partial t} h(t\tilde{\theta})|}{1000} - \frac{|\tfrac{\partial}{\partial t} h(t\tilde{\theta})|}{1000}\\
				\geq& \frac{499d}{500L}
			\end{aligned}
		$$
		and using $\tilde{f}_{\r}(t\tilde{\theta})=|\tilde{f}_{\r}(t\tilde{\theta})|\phi_f(t\tilde{\theta})$ and 
		$\tfrac{\partial}{\partial t}\langle\phi_f,\phi_f \rangle=0$ we obtain
		\eqn{trivial}
		$$
		\langle \tfrac{\partial}{\partial t}\tilde{f}_{\r}(t\tilde{\theta}), \phi_f(t\tilde{\theta})\rangle=
		\tfrac{\partial}{\partial t}\bigl|\tilde{f}_{\r}(t\tilde{\theta})\bigr|\langle \phi_f(t\tilde{\theta}), \phi_f(t\tilde{\theta})\rangle
		+\bigl|\tilde{f}_{\r}(t\tilde{\theta})\bigr|\langle \tfrac{\partial}{\partial t}\phi_f(t\tilde{\theta}), \phi_f(t\tilde{\theta})\rangle
		=\tfrac{\partial}{\partial t} \R_f(t\tilde{\theta})
		$$
		and hence $\tfrac{\partial}{\partial t} \R_f(t\tilde{\theta})>C$.
		
		When $t\tilde{\theta} \in \bigcup_{i=1}^N O_i$ we use \eqref{trivial} and calculate similarly as in \eqref{Minnesota} and \eqref{Boise} (again $\alpha$ denotes the angle between $\tilde{\theta}$ and $\tilde{\omega}_i$)
		$$
			\begin{aligned}
				\tfrac{\partial}{\partial t}\R_f(t\tilde{\theta})=&\langle \tfrac{\partial}{\partial t}\tilde{f}_{\r}(t\tilde{\theta}), \phi_f(t\tilde{\theta})\rangle=\langle D_{\tilde{\theta}}\tilde{f}_{\r}(t\tilde{\theta}), \phi_f(t\tilde{\theta})\rangle\\
				\geq& \cos\alpha \bigl\langle D_{\tilde{\omega}_i} \tilde{f}_{\r},\phi_f(t\tilde{\theta})\bigr\rangle 
			- \bigl|\sin\alpha \bigl\langle D_{\tilde{\omega}_i^{\bot}} \tilde{f}_{\r},
				\phi_f(t\tilde{\theta})\bigr\rangle\bigr|\\
				\geq&\cos\alpha \Bigl\langle D_{\tilde{\omega}_i} \tilde{f}_{\r},
				\frac{D_{\tilde{\omega}_{i}}f(\rho_i\tilde{\omega}_i)}
				{|D_{\tilde{\omega}_{i}}f(\rho_i\tilde{ \omega}_i)|}\Bigr\rangle\langle \phi_f(t\tilde{\theta}) ,\phi_f(t\tilde{\omega}_i)\rangle\\
				& \quad  - \Bigl| \Bigl\langle D_{\tilde{\omega}_i} \tilde{f}_{\r}, \Bigl(\frac{D_{\tilde{\omega}_{i}}f(\rho_i\tilde{\omega}_i)}{|D_{\tilde{\omega}_{i}}f(\rho_i\tilde{\omega}_i)|}\Bigr)^{\bot}\Bigr\rangle\Bigr|- |\sin\alpha|\ |D_{\tilde{\omega}_i^{\bot}}\tilde{f}_{\r}|.\\
			\end{aligned}
		$$
		In \eqref{Caiman} (term corresponding to $b_2$) we estimated that 
		$$
		\Bigl\langle D_{\tilde{\omega}_i} \tilde{f}_{\r},\frac{D_{\tilde{\omega}_{i}}f(\rho_i\tilde{\omega}_i)} {|D_{\tilde{\omega}_{i}}f(\rho_i\tilde{ \omega}_i)|}\Bigr\rangle 
		\geq \frac{499}{500}|D_{\tilde{\omega}_{i}}f(\rho_i\tilde{ \omega}_i)| \geq \frac{499d}{500L}
		$$ 
		and (term corresponding to $b_1$) that 
		$$\Bigl| \Bigl\langle D_{\tilde{\omega}_i} \tilde{f}_{\r}, \Bigl(\frac{D_{\tilde{\omega}_{i}}f(\rho_i\tilde{\omega}_i)}{|D_{\tilde{\omega}_{i}}f(\rho_i\tilde{\omega}_i)|}\Bigr)^{\bot}\Bigr\rangle\Bigr|\leq \frac{d}{50L}.
		$$ 
		Now computing similarly as in \eqref{Boise} we obtain that $\sin\alpha \leq \tfrac{d}{216L^2}$, $\cos\alpha \geq \tfrac{9}{10}$ (see \eqref{alphasize}) and applying the previous to the above estimate we get
		\eqn{ahaaa}
		$$
			\begin{aligned}
				%&\text{ VYSVETLIT ODHAD PRVNIHO A DRUHEHO CLENU?- ODKAZY?}\\
				\tfrac{\partial}{\partial t}\R_f(t\tilde{\theta})\geq& \frac{9}{10} \frac{499d}{500L}\langle \phi_f(t\tilde{\theta}) ,\phi_f(t\tilde{\omega}_i)\rangle - \frac{d}{50L} - 8L\frac{d}{216L^2}.\\
			\end{aligned}
		$$
		Call $t\tilde{\theta} = a\tilde{\omega}_i + b\tilde{\omega}_i^{\bot}$. Then we obtain
		%$\tilde{f}_{\r} = f$ on $\tilde{\omega}_i\times [0,R]$, 
		using \eqref{elementary}, \eqref{MuskTurtle}, \eqref{Portland} and $r_i \leq \frac{Rd}{1200 L^2}$ that
		$$
			| \phi_f(t\tilde{\theta}) - \phi_{f}(a\tilde{\omega}_i) |
			\leq\frac{|\tilde{f}_{\r}(t\tilde{\theta}) - \tilde{f}_{\r}(a\tilde{\omega}_i) |}{|\tilde{f}_{\r}(t\tilde{\theta})|} 2 
			\leq \frac{8Lr_i} {\frac{3d}{8L}R} 2\leq \frac{1}{100}.
		$$
		Similarly using \eqref{elementary}, \eqref{Becca} and \eqref{Andrea} (obviously $\phi_h(\rho_i\tilde{\omega}_i) =\phi_{h}(a\tilde{\omega}_i)$) we have
		$$
			\begin{aligned}
				| \phi_f(\rho_i\tilde{\omega}_i) - \phi_{f}(a\tilde{\omega}_i) |
				\leq& | \phi_f(\rho_i\tilde{\omega}_i) - \phi_h(\rho_i\tilde{\omega}_i) | +|  \phi_{f}(a\tilde{\omega}_i) - \phi_{h}(a\tilde{\omega}_i) | \\
				\leq& \frac{M\rho^2_i}{2} \frac{L}{d\rho_i} 2+ \frac{Ma^2}{2} \frac{L}{da}2\\
				\leq&  2\frac{ML}{d}\rho_i
				\leq \frac{1}{100}.
			\end{aligned}
		$$
		Since $| \phi_f(t\tilde{\theta}) - \phi_{f}(\rho_i\tilde{\omega}_i) |< \frac{1}{50}$ we obtain $\langle \phi_f(t\tilde{\theta}) ,\phi_f(t\tilde{\omega}_i)\rangle \geq \frac{1}{2}$ and so continuing the estimate \eqref{ahaaa}
		$$
				\frac{\partial}{\partial t}\R_f(t\tilde{\theta})\geq \frac{9}{10}\frac{499d}{1000L}\frac{1}{2} - \frac{d}{50L}- \frac{d}{27L} \\
				\geq C >0.
		$$

		\step{4}{Proving that $g_{\r}$ is a diffeomorphism}{Nebraska}
		
		Call $\lambda= \tfrac{d}{4L}$. We need to redefine our mapping close to the origin so it is smooth there. We define it as a proper 
		interpolation between a linear mapping $[x,y]\to\lambda [x,y]$ and our mapping $\tilde{f}_r$. We define it as 
		$$
			g_{\r}(x, y) = \R_g(x, y)\phi_g(x, y),
		$$
		where
		$$
			\begin{aligned}
				\R_g(t\tilde{\theta})
				&= \bigl(1-\eta\big(\tfrac{8t-7R}{R}\big)\big)\lambda t +  \eta\big(\tfrac{8t-7R}{R}\big) \R_f(t\tilde{\theta}\bigr) \quad \text{ and}\\
				%\R_g(x, y) &= \bigl(1-\eta\big(\tfrac{8|[x,y]|-7R}{R}\big)\bigr)\lambda |[x,y]| +  \eta\big(\tfrac{8|[x,y]|-7R}{R}\bigr) |\tilde{f}_{\r}(x,y)|\\
				\phi_g(t\tilde{\theta})
				&= \bigl(1-\eta\big(\tfrac{8t-6R}{R}\big)\big)\tilde{\theta} +  \eta\big(\tfrac{8t-6R}{R}\big) \phi_f(t\tilde{\theta}\bigr).\\% \quad\text{ that is}\\
				%\phi_g(x,y)&= \bigl(1-\eta\big(\tfrac{8|[x,y]|-6R}{R}\big)\bigr)\frac{[x,y]}{|[x,y]|} +  \eta\big(\tfrac{8|[x,y]|-6R}{R}\bigr) \frac{\tilde{f}_{\r}(x,y)}{|\tilde{f}_{\r}(x,y)|}.
			\end{aligned}
		$$
		Note that this is equal to $\lambda[x,y]$ on $B(0,\tfrac{6}{8}R)$ and it is equal to $\tilde{f}_r$ outside of $B(0,R)$. 
		It is changing the angle on $B(0,\tfrac{7}{8}R)\setminus B(0,\tfrac{6}{8}R)$ while keeping the distance from the origin of a map $\lambda [x,y]$ and it is changing the distance from the origin on $B(0,R)\setminus B(0,\tfrac{7}{8}R)$ while keeping the angle of $\tilde{f}_r$. 
		
		Immediately from the definition of $g_{\r}$ it is obvious that it is smooth since $[x,y]\to\lambda \cdot [x,y]$ is smooth and $\eta, \R_f$ and $\phi_f$ are all smooth away from the origin. 
		%Further by the definition of $\eta$, $g_{\r}(x,y) = \tilde{f}_{r}(x,y)$ as soon as $|(x,y)| \geq R$. 
		Let us define $\hat{\phi}_f\in[0,2\pi)$ (resp. $\hat{\phi}_g$) as the corresponding angle of $\phi_f\in\S$ (resp. $\phi_g$) modulo $2\pi$. From Step~\ref{MidWest} we know
		$$
		\bigl\langle \tfrac{\partial}{\partial\theta}\phi_f(t\cos\theta, t\sin\theta) , \phi_f^{\bot}(t\cos\theta, t\sin\theta) \bigr\rangle\geq C
		\text{ for all }t\in[\tfrac{3}{4}R,R]\text{ and }\theta.
		$$
		Using derivative of composed mapping for
		$$
		\phi_f(t\cos\theta, t\sin\theta)
		=\bigl[\cos \hat{\phi}_f(t\cos\theta, t\sin\theta),\sin \hat{\phi}_f(t\cos\theta, t\sin\theta)\bigr]$$ 
		in the above inequality and 
		$$
		\phi_f^{\bot}(t\cos\theta, t\sin\theta)=\bigl[-\sin \hat{\phi}_f(t\cos\theta, t\sin\theta),\cos \hat{\phi}_f(t\cos\theta, t\sin\theta)\bigr]
		$$
		this implies that $\tfrac{\partial}{\partial \theta}\hat{\phi}_f(t\cos\theta, t\sin\theta)\geq C$. %This means that the angle of $\tilde{f}_{\r}(t\cos\theta,t\sin\theta)$ is an increasing function of $\theta$ and thus the same holds for 
		%$g_{\r}$ as this angle is a convex combination of two increasing functions, i.e. 
		It follows that
		$$
		\frac{\partial}{\partial \theta}\hat{\phi}_g(t\cos\theta, t\sin\theta)=
		\bigl(1-\eta\big(\tfrac{8t-6R}{R}\big)\big)\frac{\partial}{\partial \theta}(\theta) +  \eta\big(\tfrac{8t-6R}{R}\big) \frac{\partial}{\partial \theta}\hat{\phi}_f(t\cos\theta, t\sin\theta)\geq C>0. 
		$$
		Further, because (see \eqref{Kansas City} and \eqref{Portland}) 
		$$
		|\tilde{f}_{\r}(t\cos\theta, t\sin\theta)| \geq \frac{3dR}{8L} > \lambda t\text{ for all }\frac{3}{4}R \leq t \leq R
		$$
		we have that
		$$
			\tfrac{\partial}{\partial t} \R_g(t\tilde{\theta})  = \bigl(1-\eta\big(\tfrac{8t-7R}{R}\big)\bigr)\lambda +  \eta\bigl(\tfrac{8t-7R}{R}\bigr) \tfrac{\partial}{\partial t}\R_f  + \frac{8}{R}\eta'\bigl(\tfrac{8t-7R}{R}\bigr)(\R_f(t\tilde{\theta}) - \lambda t)
		$$
		but as shown above each of the above terms is positive. Because 
		$$
		\frac{\partial}{\partial t}\R_{g}(t\cos\theta, t\sin\theta)\geq C>0\text{ and }\frac{\partial}{\partial \theta}\hat{\phi}_g(t\cos\theta, t\sin\theta)\geq C>0\text{ for all }0<t\leq R\text{ and all }\theta
		$$ 
		we easily conclude that $g_{\r}$ is a diffeomorphism on $\overline{B(0,R)}$ by considering the three parts $B(0,\tfrac{6}{8}R)$, $B(0,\tfrac{7}{8}R)\setminus B(0,\tfrac{6}{8}R)$ and $B(0,R) \setminus B(0,\tfrac{7}{8}R)$ separately. Further, because $g_{\r}$ coincides with the diffeomorphism $\tilde{f}_{\r}$ on $\overline{B(0,2R) \setminus B(0,R)}$ %and with diffeomorphisms $\lambda[x,y]$ on $B(0,\tfrac{R}{2})$ 
	it must be a diffeomorphism on $B(0,2R)$.
		
		\step{5}{Estimating $\int_{B(0,R)}|D^2g_{\r}|$}{Oregon}
		
		Clearly $D^2g_{\r}=D^2(\lambda [x,y])=0$ for $[x,y]\in B(0,\tfrac{6}{8}R)$ so it remains to estimate it for 
		$[x,y]\in B(0,R)\setminus B(0,\tfrac{6}{8}R)$. 
		We have $|D^2g_{\r}|=|D^2(R_g\phi_g)|$. We calculate
		$$
				\begin{aligned}
					D \R_g =& \bigl(1-\eta\big(\tfrac{8|[x,y]|-7R}{R}\big)\bigr)\lambda D|[x,y]| +  \eta\bigl(\tfrac{8|[x,y]|-7R}{R}\bigr) D|\tilde{f}_{\r}(x,y)| \\
					& + D[x,y]\frac{8}{R}\eta'(\tfrac{8|[x,y]|-7R}{R})\bigl(|\tilde{f}_{\r}(x,y)|- \lambda |[x,y]|\bigr)\\
					\end{aligned}
		$$
		and (see Section \ref{higherderivatives})
		$$
		\begin{aligned}			
					D^2\R_g = & \bigl(1-\eta\big(\tfrac{8|[x,y]|-7R}{R}\big)\bigr)\lambda D^2|[x,y]|  + \frac{8}{R}\eta'\bigl(\tfrac{8|[x,y]|-7R}{R}\bigr)\lambda D[x,y]D|[x,y]|\\
					&+  \eta\bigl(\tfrac{8|[x,y]|-7R}{R}\bigr) D^2|\tilde{f}_{\r}(x,y)| + \frac{8}{R}\eta'\bigl(\tfrac{8|[x,y]|-7R}{R}\bigr)D[x,y]D|\tilde{f}_{\r}(x,y)| \\
					& + D^2[x,y]\frac{8}{R}\eta'\bigl(\tfrac{8|[x,y]|-7R}{R}\bigr)
					\bigl(|\tilde{f}_{\r}(x,y)|- \lambda [x,y]\bigr)\\
					& + \frac{64}{R^2}D[x,y]D[x,y]\eta''\bigl(\tfrac{8|[x,y]|-7R}{R}\bigr)
					\bigl(|\tilde{f}_{\r}(x,y)|- \lambda |[x,y]|\bigr)\\
					& + \frac{8}{R}D[x,y]\eta'\bigl(\tfrac{8|[x,y]|-7R}{R}\bigr)
					\bigl(D|\tilde{f}_{\r}(x,y)|- \lambda D|[x,y]|\bigr).\\
				\end{aligned}
		$$
		We now separate $B(0,R)\setminus B(0,\tfrac{6}{8}R)$ into parts $B(0,R)\setminus [B(0,\tfrac{6}{8}R)\cup \bigcup_{i=1}^NO_i]$ and the parts $\bigcup_{i=1}^NO_i$. For $[x,y]\notin\bigcup_{i=1}^NO_i$ we know that $\tilde{f}_{\r}=f$ and hence
		$$
		|\tilde{f}_{\r}(x,y)| \leq CLR,\ D|\tilde{f}_{\r}| \leq L\ \text{ and } D^2|\tilde{f}_{\r}|\leq M. 
		$$
		By elementary computation 
		$$D|[x,y]|\leq 1,\ \big|D^2|[x,y]|\big| \leq \frac{C}{|[x,y]|}, \ |D[x,y] |\leq C\text{ and }D^2[x,y]=0. 
		$$
		Therefore 
		$$
			\begin{aligned}
				|\R_g(x,y)| \leq  CR,\ |D\R_g(x,y)| \leq  C\text{ and } |D^2\R_g(x,y)| \leq  \frac{C}{R} \\
%				|D\R_g(x,y)| \leq &  C \quad&\text{ for } |[x,y]|\leq R\\
				%|D^2\R_g(x,y)| \leq &  \frac{C}{R} + M \quad &\text{ for } |[x,y]|\leq R\\
			\end{aligned}
		$$
		for all $[x,y]\in B(0,R)\setminus [B(0,\tfrac{6}{8}R)\cup \bigcup_{i=1}^NO_i]$. Further
		$$
			\begin{aligned}
				D\phi_g = & \bigl(1-\eta\big(\tfrac{8|[x,y]|-6R}{R}\big)\bigr) D\frac{[x,y]}{|[x,y]|} 
				+\frac{8}{R}D[x,y]\eta'\bigl(\tfrac{8|[x,y]|-6R}{R}\bigr)\Bigl(\frac{\tilde{f}_{\r}(x,y)}{|\tilde{f}_{\r}(x,y)|} - \frac{[x,y]}{|[x,y]|}\Bigr)   \\
				& +\eta\big(\tfrac{8|(x,y)|-6R}{R}\big) D \frac{\tilde{f}_{\r}(x,y)}{|\tilde{f}_{\r}(x,y)|}, \\
		\end{aligned}
		$$		
				and (see Section \ref{higherderivatives})
		$$
				\begin{aligned}
				D^2\phi_g = & \bigl(1-\eta\big(\tfrac{8|[x,y]|-6R}{R}\big)\bigr) D^2\frac{[x,y]}{|[x,y]|} - \frac{8}{R}D[x,y]\eta'\big(\tfrac{8|[x,y]|-6R}{R}\big) D\frac{[x,y]}{|[x,y]|}  \\
				&+\frac{8}{R}D^2[x,y]\eta'\bigl(\tfrac{8|[x,y]|-6R}{R}\bigr)
				\Bigl(\frac{\tilde{f}_{\r}(x,y)}{|\tilde{f}_{\r}(x,y)|} - \frac{[x,y]}{|[x,y]|}\Bigr) \\
				&+\frac{64}{R^2}D[x,y] D[x,y]\eta''\bigl(\tfrac{8|[x,y]|-6R}{R}\bigr)
				\Bigl(\frac{\tilde{f}_{\r}(x,y)}{|\tilde{f}_{\r}(x,y)|} - \frac{[x,y]}{|[x,y]|}\Bigr) \\
				&+ \frac{8}{R}D[x,y] \eta' \bigl( \tfrac{8|[x,y]|-6R}{R} \bigr)  
				\Bigl(D\frac{\tilde{f}_{\r}(x,y) } {|\tilde{f}_{\r}(x,y)|}-D \frac{[x,y]}{|[x,y]|}\Bigr)\\
				& +\frac{8}{R}D[x,y]\eta'\bigl(\tfrac{8|[x,y]|-6R}{R}\bigr) 
				D \frac{\tilde{f}_{\r}(x,y)}{|\tilde{f}_{\r}(x,y)|} + \eta\bigl(\tfrac{8|[x,y]|-7R}{R}\bigr)D^2\frac{\tilde{f}_{\r}(x,y)}{|\tilde{f}_{\r}(x,y)|}.
			\end{aligned}
		$$
		It is easy to calculate that 
		$$
		\Bigl|D\frac{[x,y]}{|[x,y]|}\Bigr| \leq \frac{C}{R}\text{ and }\Bigl|D^2\frac{[x,y]}{|[x,y]|} \Bigr|\leq \frac{C}{R^2}.
		$$ 
		Further basic calculus (and $|\tilde{f}_{\r}(x,y)|\approx |[x,y]|$) gives 
		$$
		\Bigl|D\frac{\tilde{f}_{\r}(x,y) } {|\tilde{f}_{\r}(x,y)|} \Bigr|\leq \frac{C}{R}\text{ and } \Bigl|D^2\frac{\tilde{f}_{\r}(x,y) } {|\tilde{f}_{\r}(x,y)|} \Bigr|\leq \frac{C}{R^2}\text{ for all }[x,y]\in B(0,R)\setminus [B(0,\tfrac{6}{8}R)\cup \bigcup_{i=1}^NO_i].
		$$ 
		Therefore
		$$
				|\phi_g(x,y)| \leq C,\ |D\phi_g(x,y)| \leq  \frac{C}{R},\ 
				 |D^2\phi_g(x,y)| \leq \frac{C}{R^2} 
		$$
		for all $[x,y]\in B(0,R)\setminus [B(0,\tfrac{6}{8}R)\cup \bigcup_{i=1}^NO_i]$. 
		Therefore
		$$
			|D^2g_{\r}| = |D^2(\R_g\phi_g)| \leq C\bigl(  |D^2\R_g|\cdot|\phi_g| + |D\R_g|\cdot|D\phi_g| + |\R_g|\cdot|D^2\phi_g|\bigr) \leq \frac{C}{R}
		$$
		%on $B(0,R)\setminus \bigcup_{i=1}^NO_i$ and integrating over $B(0,R)\setminus \bigcup_{i=1}^NO_i$ we get
		and by integrating 
		\begin{equation}\label{Anchorage}
		\int_{B(0,R)\setminus [B(0,\frac{6}{8}R)\cup \bigcup_{i=1}^NO_i]}|D^2g_{\r}| < CR.
		\end{equation}
		
		Now we continue with the case $[x,y] \in \bigcup_{i=1}^NO_i$. We work in each $O_i$ separately. Use $K_i$ to denote
		$$
			K_i = \max\bigl\{|D^2_sf|(t\tilde{\omega}_i); t\in[0,R]\bigr\}\leq 2L. 
		$$
		It still holds that $|\tilde{f}_{\r}(x,y)| \leq CR$ and $D|\tilde{f}_{\r}(x,y)| \leq C$ but the difference is that 
		$$
		D^2|\tilde{f}_{\r}(x,y)|\leq C + \frac{CK_i}{r_i}
		$$ 
		(see estimates in step~\ref{The Carcass} of Lemma~\ref{Reptiles}, most importantly the $D_{xx}$ term). Therefore
		$$
			|\R_g(x,y)| \leq  CR,\ |D\R_g(x,y)| \leq  C\text{ and } |D^2\R_g(x,y)| \leq  \frac{C}{R} + \frac{CK_i}{r_i} \text{ for all } [x,y]\in O_i
		$$
		Similarly as before basic calculus with $|f(x,y)|\approx |[x,y]|$ gives 
		$$
		\Bigl|D\frac{\tilde{f}_{\r}(x,y) } {|\tilde{f}_{\r}(x,y)|} \Bigr|\leq \frac{C}{R}\text{ and } \Bigl|D^2\frac{\tilde{f}_{\r}(x,y) } {|\tilde{f}_{\r}(x,y)|} \Bigr|\leq \frac{C}{R^2} + \frac{C}{Rr_i}K_i,
		$$ 
		where the constants $C$ depend on $d,M$ and $L$ etc., but not $R$. Therefore
		$$
		|\phi_g(x,y)| \leq C,\ |D\phi_g(x,y)| \leq  \frac{C}{R},\ 
		|D^2\phi_g(x,y)| \leq \frac{C}{R^2} + \frac{CK_i}{Rr_i} \text{ for }|[x,y]| \in O_i.
		$$
		Calculating as before
		$$
			|D^2g_{\r}| = |D^2(\R_g\phi_g)| \leq C\bigl(  |D^2\R_g|\cdot|\phi_g| + |D\R_g|\cdot|D\phi_g| + |\R_g|\cdot|D^2\phi_g|\bigr) \leq \frac{C}{R} + \frac{CK_i}{r_i}
		$$
		Integrating the above estimates over $O_i$ we get
		$$
		\int_{O_i} |D^2g_{\r}| \leq \frac{C}{R}Rr_i +  \frac{CK_i}{r_i}Rr_i \leq Cr_i + CLR \leq CR.
		$$
		Summing the above over $i\in\{1,2,\hdots,N\}$ and adding to \eqref{Anchorage} we get \eqref{Houston}.
	\end{proof}
	
	\begin{proof}[Proof of Theorem~\ref{Dan}]
		Let $A$ denote the finite set $A = \{a_1,a_2,\dots, a_I \}$ of vertices and $S$ denote the finite set $S = \{s_1,s_2,\dots, s_J \}$ of sides of triangles of the definition of polygonal domain. We know that our quadratic mappings $Q_j$ defined on triangles $T_j$ satisfy $\det DQ_j \geq d > 0$ and we can fix constants $L>0$ and $M>0$ so that $|DQ_j|\leq L$ and $|D^2Q_j|\leq M$ for all $j$. 
Thus we can apply Lemma~\ref{America} to a translation of $f$ in the image and preimage at each vertex $a_i\in A$. 
We find $\rho_0>0$ so that $B(a_i,\rho_0)$ are pairwise disjoint.
		
		%For each $a_i \in A$ and each $s\in S$ ending at $a_i$ we choose numbers $\rho_{s,a_i}$ as follows. 
		We find $N\in \en, N\geq 4$ such that (when we call the $\ell_s$ length of $s\in S$) we have 
		$$
		\frac{\max\{\ell_s: s\in S\}}{N} < \min\Bigl\{\rho_0, \frac{\min\{d,d^2\}}{2000 (M+1)(L+1)^4}\Bigr\}.
		$$ 
		Then we call $\rho_{s}=\tfrac{\ell_s}{2N}$ for each $s\in S$. 
		
		%We call $C_1$ the constant $C$ from \eqref{Houston}.  
		Having chosen a $\rho_s$ for every $s$ ending at $a_i$ we choose 
		$$
		R_i \leq \frac{1}{2} \min\Bigl\{\frac{\epsilon}{I(M+1)}, \rho_s\Bigr\} < 
		\frac{1}{4} \min\Bigl\{\rho_0, \frac{\min\{d,d^2\}}{1000 (M+1)(L+1)^4},\frac{1}{8}\frac{L}{M+1}\Bigr\}.
		$$
		
		For each $s_j\in S$ we choose an $r_{s_j}>0$ as follows. We require that $r_{s_j}$ is smaller than the corresponding $r_0(s_j)$, the number from Lemma~\ref{Reptiles}. Further we require that 
		$$
		r_{s_j}\leq \min\Bigl\{ \frac{d^2R_i}{432L^4},\frac{R_id}{1200 L^2}, \frac{\rho_{s_j}^2}{2(L+1)}, \frac{R_i}{2}\tan\frac{\omega^*_i}{3}, 
		\frac{1}{J}\frac{\epsilon}{(M+1) \ell_s }\Bigr\}
		$$ 
		for both endpoints $a_i = a_{s_j,1}$ and $a_{i} = a_{s_j,2}$. %Finally we require $r_{s_j}\leq $. 
		For each $a_i$ we call $\r_{i} = (R_i, \rho_{1},\dots \rho_{n_i}, r_{s_{j_{i,1}}}, \dots,r_{s_{j_{i,n_i}}}  )$.
		
		Having made the above choices, we have satisfied the hypothesis of Lemma~\ref{Reptiles} (up to appropriate rotations and translations) for each side $s_j \in S$ by the choice of $r = r_{s_j}$ and hence we can construct a smooth $g = g_s$ on a small rectangular neighborhood of each side $s$. Similarly we satisfy the hypothesis of Lemma~\ref{America} and because of the same choice of parameters the smooth map $g = g_{a_i}$ is equal to $g_s$ ($a_i \in s$) as soon as the argument is $R_i$ distant from $a_i$. Both of the maps equal the original homeomorphism $f$ as soon as we are $R_i$ distant from $a_i$ and $r_{s}$ distant from $s$, which is $\mathcal{C}^{\infty}$ smooth on that set. Therefore the map
		$$
			g(x,y) = \begin{cases}
				g_{a_i}(x,y) \quad &|[x,y] - a_i| \leq R_i\\
				g_{s}(x,y) \quad & \dist([x,y], s)\leq r_{s} \text{ and } |[x,y]- a_i| \geq R_i\\
				f(x,y) \quad & \text{otherwise}
			\end{cases}
		$$
		is a $\mathcal{C}^{\infty}$-diffeomorphism. Notice that the balls $B(a_i, R_i)$ are pairwise disjoint and so the definition is correct. 
		
		For each $s_j\in S$ we call $O_j$ the $2r_{s_j}$-wide rectangular neighborhood of the line $s_j$ as in Lemma~\ref{Reptiles}. Now we use $g(x,y) = f(x,y)$ for all $[x,y] \notin \bigcup_iB(a_i, R_i) \cup \bigcup_j O_j$, which implies that
		$$
			\int_{\Omega}|D^2f - D^2g| \leq \sum_{i=1}^I\int_{B(a_i,R_i)}(|D^2f| + |D^2g|) + \sum_{j=1}^J \int_{O_j}(|D^2f| + |D^2g|).
		$$
		We estimate by summing \eqref{Houston} over $a_i\in  A$ (recall $R_i < \tfrac{\epsilon}{I (M+1)}$) and using $|D^2f| \leq M$ to obtain
		$$
		\sum_{i=1}^I\int_{B(a_i,R_i)}(|D^2f|+|D^2g|)\leq \sum_{i=1}^I(M\pi R_i^2+C R_i)\leq C\epsilon.
		$$
		Finally we sum \eqref{Iguana} over $s_j\in S$ (recall $r_{s_j}\leq \frac{1}{J}\tfrac{\epsilon}{(M+1) \ell_s }$) 
		and we get using Theorem \ref{Dan}
		$$
				\sum_{j=1}^J \int_{O_j}(|D^2f| +|D^2g|) \leq C\sum_{j=1}^J (l_{s_j}r_{s_j} M+|D^2_s f|(s))
				\leq C(\delta + \epsilon).
		$$
		
		By \eqref{MuskTurtle} and the estimates in Lemma~\ref{America} Step~\ref{The Carcass} it is immediately obvious that 
		$$
		\|f-g\|_{\infty} < \max_i CR_i + \max_s 8Lr_s, 
		$$
		which is as small as we like.
	\end{proof}

%STANDA - PROMYSLI REMARK
	%\begin{remark}[Recovering \cite{MP} in full]
	%	From \eqref{MuskTurtle} and the estimates in Lemma~\ref{America}, Step~\ref{The Carcass} we have $|Dg| \leq CL_s$ near each side $s$ and $|Dg|\leq CL_a$ near each $a\in A$. Because the set where $f\neq g$ can be made as small as we like we have can construct a sequence $Dg_k \to Df$ in  $X$ any function space $X$ satisfying the absolute continuity of the norm, especially in every $L^p$, $1\leq p<\infty$. Similarly, it is not difficult to calculate the inverse of the derivative in Lemma~\ref{Reptiles} and see that $|Dg^{-1}| \leq C|Df^{-1}|$ near sides $s$. Similarly near vertices we have that $|Dg|$ stays bounded by $C|Df|$ and $J_g\geq C$ independent of $R_i$ and therefore also $|Dg^{-1}| \leq |Df^{-1}|$ near vertices. Altogether this implies that we can construct a sequence of $g_k$ satisfying $\|Dg_k - Df\|_X + \|Dg^{-1} - Df^{-1}\|_{Y} \to 0$ for any pair $X,Y$ of function spaces with the absolute continuity of the norm. Especially we can construct $g_k$ converging in $W^{1,p}$ and its inverse converging in $W^{1,q}$ for any $1\leq p,q<\infty$ For the same reasoning also $\int_{\Omega}J_{g_k}^{-p} - J_{f}^{-p} \to 0$. Although the detailed calculations are not complicated we have not pursued this avenue.
	%\end{remark}

\section{Piecewise quadratic approximation on good squares - Proof of Theorem \ref{Standa}}

	In this section we first show that the quadratic polynomials constructed in subsection \ref{FEMapr} approximate our homeomorphism $f$ well on some good squares and then we show Theorem \ref{Standa}. As noted in subsection \ref{FEMapr} we know that the two quadratic polynomials on adjacent triangles have the same values on $T_1\cap T_2$ but the derivatives (in the orthogonal direction) are not necessarily the same. 
	The key observation \eqref{close} $(ii)$ below shows that they do not differ too much. 
	
	\prt{Theorem}
	\begin{proclaim}\label{obluda}
		Let $T_1$ be a triangle with vertices $v_1=[0,0]$, $v_2=[r,0]$ and $v_3=[0,r]$ for some $r>0$. 
		Let $T_2$ be an adjacent triangle, i.e. either with vertices $\{[r,0], [0,r], [r,r]\}$ or $\{[0,0], [r,0], [r,-r]\}$ or 
		$\{[0,0], [0,r], [-r,r]\}$. 
		Let us assume that we have a homeomorphism $f\in W^{2,1}(Q([0,0],2r),\er^2)$. Let $0<\delta<1$ and assume that 
		\eqn{start}
		$$
		J_f(0,0)>\delta,\ \|Df(0,0)\|<\frac{1}{\delta}%,\ \|D^2f(0,0)\|<\frac{1}{\delta}
		$$
		and for $\epsilon>0$ we have
		\eqn{key}
		$$
		|f(z)-f(0,0)-Df(0,0)z|<\epsilon|z|\text{ for }z\in Q([0,0],3r),
		$$
		\eqn{key1}
		$$
		\oint_{Q([0,0],3r)}|Df(z)-Df(0,0)|\; dz<\epsilon
		$$
		and 
		\eqn{key2}
		$$
		\oint_{Q([0,0],3r)}|D^2f(z)-D^2f(0,0)|\; dz<\epsilon. 
		$$
		
		Then there are absolute constant $C_0>0$ and quadratic mappings $A_1,\ A_2:Q([0,0],2r)\to\er^2$ so that 
		\eqn{close}
		$$
		\begin{aligned}
		&(i)\ D^2A_i\text{ is constant and }|D^2A_i(0,0)-D^2f(0,0)|<C\epsilon,\\
		&(ii)\ |DA_1(z)-DA_2(z)|<\epsilon r\text{ for every }z\in T_1\cap T_2,\\
		&(iii)\ A=\begin{cases}A_1\text{ on }T_1\\
		A_2\text{ on }T_2\\
		\end{cases}
		\text{is homeomorphism with }\det A>\frac{\delta}{2}\text{ on }T_1\cup T_2,\text{ if }\epsilon<C_0\delta^2,\\
		&(iv)\ |f(z)-A_1(z)|< C_1 r\epsilon\text{ for every }z\in T_1.
		\end{aligned}
		$$
		Further the map $A_1$ is independent of the choice of $T_2$.
	\end{proclaim}
	
	\begin{proof}
		We define $A_1$ on $T_1$ by \eqref{FEM} and $A_2$ on $T_2$ by a similar procedure, i.e. values of $A_2$ in corners of $T_2$ are determined by the average values of $f$ nearby and a derivative at each vertex along a given side is determined by the average of the corresponding derivative of $f$. We just make sure that on the side $T_1\cap T_2$ both $A_1$ and $A_2$ use the same vertex for the definition of derivative along that side. 
		In fact we can divide the whole $\er^2$ into squares of sidelength $r$, divide them into two triangles (by segment in direction $[-1,1]$) and assign to each vertex a direction along one of the sides (where we define the derivative of the approximating quadratic polynomial) so that it matches the definition for $T_1$ and $T_2$ above (see Fig. \ref{Fig:Triangulization}). 
		
		{\bf Part $(i)$: } We have
		\eqn{1}
		$$
		\begin{aligned}
		A_1([r,0])-A_1([0,0])-rD_xA_1([0,0])
		&=\int_0^r D_{x}A_1([t,0])\; dt-\int_0^r D_{x}A_1([0,0])\; dt\\
		&=\int_0^r (r-a) D_{xx}A_1([a,0])\; da.
		\end{aligned}
		$$
		In preparation for \eqref{2} we define a function
		$$
			w(z_1,z_2) = \int_{ \max \big\{ - \sqrt{r^2/100 - z_2^2}, z_1-r \big\} }^{ \min\big\{ \sqrt{r^2/100 - z_2^2}, z_1 \big\} } \frac{r+s-z_1}{\mathcal{L}^2(B(0,\tfrac{r}{10}))} \; ds
		$$
		on the set $([0,r]\times\{0\})+B(0,\frac{r}{10})$. Because
		$$
		\min\{ \sqrt{r^2/100 - y^2}, z_1 \} - \max \{ - \sqrt{r^2/100 - y^2}, z_1-r \} \leq \frac{r}{5}
		$$
		on which $0 \leq |r+s-z_1| \leq Cr$ we have a geometric constant $C$ such that
		\begin{equation}\label{defw1}
			0\leq w(z) \leq C.
		\end{equation}
		By the definition of $A_1$ (see \eqref{FEM}), the ACL condition and straight forward Fubini theorem \eqref{1} is equal to
		\begin{equation}\label{2}
			\begin{aligned}
				\oint_{B(0,\frac{r}{10})}&\bigl[f([r,0]+z)-f([0,0]+z)-rD_xf([0,0]+z)\bigr]\; dz=\\
				&= \oint_{B(0,\frac{r}{10})}\int_0^r (r-a) D_{xx}f([a,0]+z)\; da\; dz\\
				& = \int_{-\tfrac{r}{10}}^{\tfrac{r}{10}}\int_{-\sqrt{\frac{r^2}{100} - z_2^2}}^{\sqrt{\frac{r^2}{100} - z_2^2}}
				\int_{z_1}^{z_1+r} D_{xx}f(t,z_2)\frac{r+z_1-t}{\mathcal{L}^2(B(0,r/10))} \; dt \; dz_1 \; dz_2\\
				& = \int_{-\tfrac{r}{10}}^{\tfrac{r}{10}}\int_{-\sqrt{\frac{r^2}{100} - z_2^2}}^{r+\sqrt{\frac{r^2}{100} - z_2^2}}D_{xx}f(t,y)\int_{ \max \{ - \sqrt{r^2/100 - z_2^2}, t-r \} }^{ \min\{ \sqrt{r^2/100 - z_2^2}, t \} } \frac{r+s-t}{\mathcal{L}^2(B(0,r/10))} \; ds \; dt \; dz_2\\
				&=\int_{([0,r]\times\{0\})+B(0,\frac{r}{10})}w(z)D_{xx}f(z)\; dz.
			\end{aligned}
		\end{equation}	
		Further
		\eqn{defw2}
		$$
		\int_{([0,r]\times\{0\})+B(0,\frac{r}{10})}w(z)\; dz=\int_0^r(r-a)\; da=\frac{r^2}{2}, 
		$$ 
		can be easily deduced by considering the special case 
		$D_{xx}f\equiv 1$. 
		%It follows that
		%$$
		%\int_{([0,r]\times\{0\})+B(0,\frac{r}{10})}w(z)\; dz\geq C r^2. 
		%$$
		Since $D_{xx}A_1$ is constant we can use equality of \eqref{1} and \eqref{2} together with \eqref{defw2}, \eqref{defw1}, and \eqref{key2} to obtain
		\eqn{ffff}
		$$
		\begin{aligned}
		\Bigl|D_{xx}A_1([0,0])-D_{xx}f([0,0])\Bigr|&=\frac{1}{\int_0^r(r-a)\; da}\Bigl|\int_0^r (r-a) \bigl(D_{xx}A_1([a,0])-D_{xx}f([0,0])\bigr)\; da\Bigr|\\
		&=\frac{2}{r^2}\Bigl|\int_{([0,r]\times\{0\})+B(0,\frac{r}{10})}w(z)\bigl(D_{xx}f(z)-D_{xx}f([0,0])\bigr)\; dz\Bigr|\\
		&\leq\frac{C}{r^2}\int_{([0,r]\times\{0\})+B(0,\frac{r}{10})}\bigl|D_{xx}f(z)-D_{xx}f([0,0])\bigr|\; dz\\
		&<C\epsilon.
		\end{aligned}
		$$
		
		%\eqn{ffff}
		%$$
		%\bigl|D_{xx}A_1([0,0])-D_{xx}f([0,0])\bigr|<C\epsilon. 
		%$$
		
		By similar reasoning on side $[0,0]$, $[0,r]$ with the help of $D_yA_1([0,r])$ we obtain that
		\eqn{fff}
		$$
		\bigl|D_{yy}A_1([0,0])-D_{yy}f([0,0])\bigr|<C\epsilon. 
		$$
		It remains to consider $D_{xy}$. We use $A_1([r,0])$, $A_1([0,r])$, $(-D_x+D_y)A_1([r,0])$ and similar formulas for the one dimensional function $h(t)=f([r,0]+t[-1,1])$. By the chain rule 
		$$
		\begin{aligned}
		h'(t)&=-D_xf([r-t,t])+D_yf([r-t,t])\text{ and }\\
		h''(t)&=D_{xx}f([r-t,t])-D_{yx}f([r-t,t])-D_{xy}f([r-t,t])+D_{yy}f([r-t,t]). 
		\end{aligned}
		$$
		Now $D_{xy}f=D_{yx}f$ as distributional derivatives are always interchangeable. An analogy of the inequality \eqref{ffff} above together with the fact that we already know \eqref{ffff} and \eqref{fff} for $D_{xx}$ and $D_{yy}$ implies that
		$$
		\bigl|D_{xy}A_1([0,0])-D_{xy}f([0,0])\bigr|<C\epsilon. 
		$$
		The proof for $|D^2A_2(0,0)-D^2f(0,0)|<C\epsilon$ on $T_2$ is similar.  Therefore \eqref{close} $i)$ has been proved.
		
		{\bf Part $(ii)$:} We know that $T_1$ has vertices $[0,0]$, $[r,0]$ and $[0,r]$. We assume that $T_2$ has vertices $[0,0]$, $[r,0]$ and $[r,-r]$ as other cases can be treated similarly. Our $A_1$ on $T_1$ is defined by \eqref{FEM} and $A_2$ on $T_2$ is defined using (average) values at vertices and derivatives along sides
		$$
		\begin{aligned}
		D_xA_2([0,0])&=\oint_{B([0,0],\frac{r}{10})}D_xf,\ 
		-D_yA_2([r,0])=\oint_{B([r,0],\frac{r}{10})}-D_yf\\
		&\text{ and }
		(-D_{x}+D_y)A_2([r,-r])=\oint_{B([r,-r],\frac{r}{10})}(-D_{x}+D_y)f.\\
		\end{aligned}
		$$ 
		In this way we have $D_xA_2([0,0])=D_xA_1([0,0])$ as they are defined by the same expression.

		For any $x\in[0,r]$ we have with the help of $D_xA_2([0,0])=D_xA_1([0,0])$, $D^2A_i$ is constant and \eqref{close} $(i)$ (which was proved in part (i))
		\eqn{alongline}
		$$
		\begin{aligned}
		\bigl|D_x (A_1-A_2)([x,0])\bigr|
		&=\Bigl|D_x (A_1-A_2)([0,0])+\int_0^x D_{xx}(A_1-A_2)([a,0])\; da\Bigr|\\
		&\leq r\Bigl(|D_{xx}A_1([0,0])-D_{xx}f([0,0])|+|D_{xx}A_2([0,0])-D_{xx}f([0,0])|\Bigr)\\
		&\leq Cr\epsilon.\\
		\end{aligned}
		$$
		
		It remains to show that $D_y (A_1-A_2)$ along $T_1\cap T_2$ is small. 
		%We know that $|D_x(A_1-A_2)([r,0])|\leq Cr\epsilon$ and 
		By the definition of $A_i$
		$$
		D_yA_2([r,0])=\oint_{B([r,0],\frac{r}{10})}D_yf\text{ and }(-D_x+D_y)A_1([r,0])=\oint_{B([r,0],\frac{r}{10})}(-D_x+D_y)f. 
		$$
		%giving
		%$$
		%	\begin{aligned}
		%		D_y(A_1-A_2)([r,0]) 
			%	=& D_xA_1(r,0) + (-D_x+D_y)A_1(r,0) - D_yA_2(r,0)\\
			%	 =& D_x A_1([r,0])-\oint_{B([r,0],\frac{r}{10})}D_xf
			%\end{aligned}
		%$$
		and hence
		$$
		\begin{aligned}
		\bigl|D_y(A_1-A_2)([r,0])\bigr|\leq & \Bigl|D_x A_1([r,0])-\oint_{B([r,0],\frac{r}{10})}D_xf\Bigr|\\
		\leq & \Bigl|D_x A_1([r,0])-D_xA_1([0,0])-rD_{xx}A_1([0,0])\Bigr|+\\
		&+r\Bigl|D_{xx}f([0,0])-D_{xx}A_1([0,0])\Bigr|+\\
		&+\Bigl|\oint_{B([r,0],\frac{r}{10})}D_xf-\oint_{B([0,0],\frac{r}{10})}D_xf-rD_{xx}f([0,0])\Bigr|.\\
		\end{aligned}
		$$ 
		The first expression on the righthand is zero by the fundamental theorem of calculus for $D_x$ as $D_{xx}A_1$ is constant and the second one is bounded by $Cr\epsilon$ by $(i)$. 
		It remains to estimate the last term using ACL condition, fundamental theorem of calculus and \eqref{key2}
		$$
		\begin{aligned}
		\Bigl|\oint_{B([r,0],\frac{r}{10})}&D_xf-\oint_{B([0,0],\frac{r}{10})}D_xf-rD_{xx}f([0,0])\Bigr|=\\
		&=\Bigl|\oint_{B([0,0],\frac{r}{10})}\int_0^r \bigr[D_{xx}f([t,0]+z)-D_{xx}f([0,0])\bigl]\; dt\; dz\Bigr|\\
		&\leq Cr \oint_{Q([0,0],3r)}\bigr|D_{xx}f(z)-D_{xx}f([0,0])\bigl|\; dz\\
		&\leq Cr \epsilon. 
		\end{aligned}
		$$ 
		It follows that $|D(A_1-A_2)([r,0])|\leq Cr\epsilon$. 
		
		Similarly to \eqref{alongline} we obtain for $a\in[0,r]$
		\eqn{a}
		$$
		\begin{aligned}
		\bigl|D_y (A_1-A_2)([0,a])-D_y (A_1-A_2)([0,0])\bigr|
		&=\Bigl|\int_0^a D_{yy}(A_1-A_2)([0,t])\; dt\Bigr|\\
		&\leq Cr\epsilon\\
		\end{aligned}
		$$
		and with the help of $|D(A_1-A_2)([r,0])|\leq Cr\epsilon$ also for $t\in[0,r]$
		\eqn{aa}
		$$
		\begin{aligned}
		\bigl|(-D_x+D_y) (A_1-A_2)([t,r-t])\bigr|
		&\leq \bigl|(-D_x+D_y) (A_1-A_2)([r,0])\bigr|+Cr\epsilon\\
		&\leq Cr\epsilon.\\
		\end{aligned}
		$$
		The integral of the derivative along the closed curve is zero and thus 
		\eqn{triangle}
		$$
		0=\int_0^r D_x(A_1-A_2)([a,0])\; da+ \int_0^r (-D_x+D_y)(A_1-A_2)([r-a,a])\; da+\int_0^r (-D_y)([0,r-a])\; da. 
		$$
		It follows using \eqref{a}, \eqref{alongline} and \eqref{aa} that
		$$
		\begin{aligned}
		r\bigl|D_y (A_1-A_2)([0,0])\bigr|\leq& \Bigl|\int_0^r D_y(A_1-A_2)([0,a])\; da\Bigr|+Cr^2\epsilon\\
		\leq& \Bigl|\int_0^r D_x(A_1-A_2)([a,0])\; da\Bigr|+\\
		&+\Bigl|\int_0^r (-D_x+D_y)(A_1-A_2)([r-a,a])\; da\Bigr|+Cr^2\epsilon\\
		\leq & Cr^2\epsilon.\\
		\end{aligned}
		$$
		We have just shown that $|D (A_1-A_2)([0,0])|\leq Cr\epsilon$. 
		Similar reasoning can estimate the derivative of $A_1-A_2$ at other points $[x,0]\in T_1\cap T_2$, we just use a triangle with vertices $[x,0]$, $[r,0]$ and $[x,r-x]$ in an analogy of \eqref{triangle}. 
		
		{\bf Part $(iii)$: } 
		We know by the definition of $T_1$ and $T_2$ that $A_1=A_2$ on $T_1\cap T_2$ (see subsection \ref{FEMapr}) and hence $A$ is continuous. 
		It remains to show that $\det A>\frac{\delta}{2}$ and that $A$ is $1-1$ on $T_1\cup T_2$. 
		
		The definition \eqref{FEM} of $A_1$ in fact means that to determine the coefficients of quadratic function $A_1$ we solve the equation $Ma=c$, where $c$ determines the averaged values of $f$ and $Df$ along the sides (in vertices), $a$ is the vector of coefficients of $A_1$ and
		\eqn{matrix}
		$$
		M=\begin{pmatrix}
		1&0&0&0&0&0\\ 
		1&r&0&r^2&0&0\\ 
		1&0&r&0&0&r^2\\ 
		0&1&0&0&0&0\\ 
		0&-1&1&-2r&2r&0\\
		0&0&-1&0&0&-2r\\ 
		\end{pmatrix}
		\text{ and }
		M^{-1}=\begin{pmatrix}
		1&0&0&0&0&0\\ 
		0&0&0&1&0&0\\ 
		-\frac{2}{r}&0&\frac{2}{r}&0&0&1\\ 
		-\frac{1}{r^2}&\frac{1}{r^2}&0&-\frac{1}{r}&0&0\\ 
		0&\frac{1}{r^2}&-\frac{1}{r^2}&-\frac{1}{2r}&\frac{1}{2r}&-\frac{1}{2r}\\ 
		\frac{1}{r^2}&0&-\frac{1}{r^2}&0&0&-\frac{1}{r}\\ 
		\end{pmatrix}.
		$$ 
		If fact we solve this for $a=[a_1,a_2,a_3,a_4,a_5,a_6]$ where $c$ is determined by the first coordinate function of $f$ and we solve it for 
		$a=[b_1,b_2,b_3,b_4,b_5,b_6]$ where $c$ is determined by the second coordinate function of $f$. 
		
		We know that \eqref{key} and \eqref{key1} hold for $f$ and thus we can divide it into linear part $L(z)=f(0,0)+Df(0,0)z$ plus 
		$E:=f-L$ and $|f-L|\leq \epsilon |z|$ on $Q([0,0],2r)$. 
		Thus we can divide the right-hand side $c$ into two terms $c_L+c_E$, $c_L$ corresponding to the linear part $L$ of $f$ and $c_{E}$ corresponding to the remaining $(f-L)$-term. Our equation is linear and the unique solution to the linear part $L$ is the same linear function (with determinant $>\delta$). Let us estimate the derivative of $E=f-L$. 
		From $|f-L|\leq \epsilon r$ on $Q([0,0],r)$ (see \eqref{key}), definition of $A_1$ \eqref{FEM} and $\mir2(B(v_i,\frac{r}{10}))\geq C r^2$ we see that 
		$$
		|(c_E)_1|\leq C\epsilon r,\ |(c_E)_2|\leq C\epsilon r\text{ and }|(c_E)_3|\leq C\epsilon r. 
		$$
		Similarly we obtain from \eqref{key1} and \eqref{FEM} that
		$$
		|(c_E)_4|\leq C\epsilon ,\ |(c_E)_5|\leq C\epsilon \text{ and }|(c_E)_6|\leq C\epsilon . 
		$$
		Given the form of $M^{-1}$ \eqref{matrix} it is now easy to see that the solution $a_E:=M^{-1}c_E$ satisfies
		$$
		|(a_E)_i|\leq C\epsilon\text{ for }i=1,2,3\text{ and }|(a_E)_i|\leq C\frac{\epsilon}{r}\text{ for }i=4,5,6. 
		$$
		Now for every $z\in Q([0,0],r)$ we have 
		\eqn{defE}
		$$
		|DE(z)|\leq C(a_2+a_3+a_4r+a_5r+a_6r)\leq C\epsilon. 
		$$
		Thus we have a quadratic function $A_1=L+E$, where (see \eqref{start}) $\det DL>\delta$, $|DL|<\frac{1}{\delta}$ and $|DE|\leq C\epsilon$. 
		Assume that $0<\epsilon<C_0 \delta^2$. 
		Now $\det D(L+E)$ contains $\det DL$ plus other terms whose sum is smaller than 
		$$
		C |DE|(|DL|+|DE|)\leq C\epsilon\bigl(\frac{1}{\delta}+\epsilon\bigr)\leq C C_0 \delta.  
		$$
		Now it is easy to see that we can choose an absolute constant $C_0$ so that 
		$$
		\det DA_1(z)=\det(L+E)(z)>\frac{\delta}{2}\text{ for every }z\in Q([0,0],2r). 
		$$

		Now we prove that $A_1$ is $1-1$ on $T_1$. Let us denote by $\lambda_1, \lambda_2$ the eigenvalues of the matrix $Df(0,0)$. From \eqref{start} we know that 
		$$
		\lambda_1\lambda_2>\delta\text{ and }\max\{|\lambda_1|,|\lambda_2|\}<\frac{1}{\delta}\text{ and hence }\min\{|\lambda_1||,|\lambda_2|\}>\delta^2. 
		$$
		It follows that the linear function $L(z)=f(0,0)+Df(0,0)z$ satisfies
		\eqn{analogy}
		$$
		|L(z)-L(w)|\geq \delta^2|z-w|\text{ for every }z,w\in Q([0,0],2r). 
		$$
		From $A_1=L+E$, $|DE|\leq C\epsilon$ and $\epsilon<C_0\delta^2$ we obtain for $z,w\in Q([0,0],2r)$
		$$
		|A_1(z)-A_1(w)|\geq |L(z)-L(w)|-|E(z)-E(w)|\geq \delta^2 |z-w|-C\epsilon|z-w|\geq \frac{\delta^2}{2}|z-w|
		$$
		once $C_0$ is chosen sufficiently small. It follows that $A_1$ is $1-1$ on $T_1$ and similarly we can show that $A_2$ is $1-1$ on $T_2$. 
		
		It remains to show that we cannot have $A_1(z)=A_2(w)$ for $z\in T_1$ and $w\in T_2$. We find $v\in T_1\cap T_2$ on the line segment 
		between $z$ and $w$. %Without loss of generality we can assume that $|z-v|\geq \tfrac{|z-w|}{2}$. 
		We know that $A_1=L+E_1$ and $A_2=L+E_2$ with $|DE_1|\leq C\epsilon$ and $|DE_2|\leq C\epsilon$. Analogously as above we use $A_1(v)=A_2(v)$ to obtain
		\eqn{clear}
		$$
		\begin{aligned}
		|A_1(z)-A_2(w)|&\geq |L(z)-L(w)|-|E_1(z)-E_1(v)|-|E_2(v)-E_2(w)|\\
		&\geq \delta^2 |z-w|-C\epsilon|z-w|-C\epsilon|z-w|\\
		&\geq \frac{\delta^2}{2}|z-w|\\
		\end{aligned}
		$$
		once $C_0$ is chosen sufficiently small. Hence $A$ is $1-1$ and thus a homeomorphism on $T_1\cup T_2$. 
		
		Moreover, we can divide $Q([0,0],2r)$ into $32$ triangles, define quadratic functions $A$ on each of them by (translated and rotated version of) \eqref{FEM}. Similarly to \eqref{clear} we can even show that $A$ is a homeomorphism on the whole $Q([0,0],2r)$ (once $C_0$ is sufficiently small but fixed absolute constant) since we subtract only bounded number of terms $C\epsilon|z-w|$ in analogy of \eqref{clear}. 
		
		{\bf Part $(iv)$: } We know that $A_1=L+E$ where $L(z)=f(0,0)+Df(0,0)z$ and $|DE|\leq C\epsilon$ (see \eqref{defE}). 
		It follows using \eqref{key} and $|DE|\leq C\epsilon$ that for $z\in T_1$ %and $E(0,0)=0$ NOT TRUE 
		\eqn{t}
		$$
		\begin{aligned}
		|f(z)-A_1(z)|&\leq |f(z)-L(z)|+|A_1(z)-L(z)|\\
		&\leq \epsilon|z|+|E(z)-E(0,0)|+|E(0,0)| \\
		&\leq \epsilon r+C\epsilon r+|E(0,0)|.\\
		\end{aligned}
		$$
		Clearly $E(0,0)=A_1(0,0)-f(0,0)=[a_1,b_1]-f(0,0)$ and the coefficients $[a_1,b_1]$ are given by (see \eqref{FEM})
		$$
		[a_1,b_1]=\oint_{B([0,0],\frac{r}{10})}f(z)\; dz.
		$$
		Hence we obtain using \eqref{key}
		\eqn{tt}
		$$
		\begin{aligned}
		|E(0,0)|&=\Bigl|\oint_{B([0,0],\frac{r}{10})}\bigl(f(z)-L(z)\bigr)\; dz\Bigr|\\
		&=\Bigl|\oint_{B([0,0],\frac{r}{10})}\bigl(f(z)-f(0,0)-Df(0,0)z\bigr)\; dz\Bigr|\\
		&\leq \epsilon r.
		\end{aligned}
		$$
		Our conclusion for $C_1:=2+C$ follows from \eqref{t} and \eqref{tt}. 
	\end{proof}

	\begin{proof}[Proof of Theorem \ref{Standa}]
		Let us recall that we have a $W^{2,1}$ homeomorphism so that $J_f>0$ a.e. We fix $\eta>0$ so that the set
		$$
		\Omega_{\eta}:=\{z\in\Omega:\ \dist(z,\partial\Omega)>\eta\}\text{ satisfies }\mir2(\Omega\setminus \Omega_{\eta})<\frac{\nu}{2}. 
		$$ 
		Since $J_f>0$ a.e. we can fix $\delta>0$ small enough so that 
		$$
		\Omega_{\delta}:=\Bigl\{z\in\Omega:\ J_f(z)>\delta,\ \|Df(z)\|<\frac{1}{\delta}\Bigr\}\text{ satisfies }
		\mir2(\Omega\setminus \Omega_{\delta})<\frac{\nu}{4}.
		$$
		
		We know that $f$ is differentiable a.e. and that a.e. point is a Lebesgue point for both $Df$ and $D^2f$. It follows that for a.e. $z\in\Omega$ we have 
		$$
		\lim_{w\to z}\frac{|f(w)-f(z)-Df(z)(w-z)|}{|w-z|}=0,
		$$
		$$
		\lim_{r\to 0+}\oint_{Q(z,r)}|Df(w)-Df(z)|\; dw=0 \text{ and }\lim_{r\to 0+}\oint_{Q(z,r)}|D^2f(w)-D^2f(z)|\; dw=0. 
		$$
		We fix $0<\epsilon<\min\{C_0\delta^2,\eta,\tfrac{\delta^2}{8},\frac{\delta^2}{4C_1}\}$, where $C_0$ and $C_1$ are constants from Theorem \ref{obluda} $(iii)$ and $(iv)$. 
		From previous limits we know that for a.e. $z$ there is $r_z>0$ so that for every $0<r\leq r_z$ we have
		\eqn{klic}
		$$
		\bigl|f(w)-f(z)-Df(z)(w-z)\bigl|<\epsilon|z-w|\text{ for }w\in Q(z,3r),
		$$
		\eqn{klic2}
		$$
		\oint_{Q(z,3r)}|Df(w)-Df(z)|\; dw<\epsilon
		\text{ and }
		\oint_{Q(z,3r)}|D^2f(w)-D^2f(z)|\; dw<\epsilon. 
		$$
		Now we fix $0<r_0<\frac{\eta}{100}$ small enough so that the good set
		$$
		G:=\{z\in \Omega_{\delta}:\ r_z>r_0\}\text{ satisfies }\mir2(\Omega\setminus G)<\frac{\nu}{2}.
		$$ 
		
		Now we would like to cover $\Omega_{\eta}$ by squares of sidelength $2r_0$ so that most corners of those squares belong to $G$. 
		That is for $z_0\in Q(0,r)$ we consider
		$$
		\mathcal{Q}_{z_0}:=\Bigl\{Q(z_0+2kr_0,r_0):\ k\in\zet^2,\ Q(z_0+kr_0,r_0)\cap \Omega_{\eta}\neq\emptyset \Bigr\}.
		$$ 
		Since $\mir2(\Omega\setminus G)<\frac{\nu}{2}$ we can find and fix $z_0$ so that the number of good vertices (with $z_0+2kr_0\in G$) is bigger than the average and we have that 
		\eqn{ahoj}
		$$
		\mathcal{Q}:=\Bigl\{Q(z_0+2kr_0,r_0)\in \mathcal{Q}_{z_0}:\ z_0+2kr_0\in G \Bigr\}\text{ satisfies }
		\mir2\Bigl(\bigcup_{Q\in \mathcal{Q}_{z_0}\setminus\mathcal{Q}}Q\Bigr)<\frac{\nu}{2}. 
		$$
		Now we choose a Whitney type covering of $\Omega\setminus \bigcup_{Q\in \mathcal{Q}_{z_0}}Q$ and our set of squares $\{Q_i\}_{i=1}^{\infty}$ for the statement consists of 
		$$
		\text{ squares in }\mathcal{Q}_{z_0}\setminus \mathcal{Q}\text{ together with all cubes covering } \Omega\setminus \bigcup_{Q\in \mathcal{Q}_{z_0}}Q. 
		$$ 
		It is clear that these squares are locally finite and \eqref{ahoj} and $\mir2(\Omega\setminus \Omega_{\eta})<\frac{\nu}{2}$ imply that
		$$
		\mir2\Bigl(\bigcup_{i=1}^{\infty}Q_i\Bigr)<\nu. 
		$$
		
		It remains to define an approximation of $f$ on $\bigcup_{Q\in \mathcal{Q}}Q$. We first divide each such $Q$ into two triangles $T_Q, \tilde{T}_Q$ by joining the lower-right corner with upper-left corner. We denote 
		$$
		\mathcal{T}:=\bigcup_{Q\in \mathcal{Q}}\{T_Q,\tilde{T}_Q\}. 
		$$ 
		As a first step we use Theorem \ref{obluda} for each $T\in \mathcal{T}$ to obtain a piecewise quadratic approximation $A_T$ there. The assumption \eqref{start} is verified by the definition of $\Omega_{\delta}$ and $G$ above (recall that corners of $Q\in\mathcal{Q}$ belong to $G$) and assumptions \eqref{key}, \eqref{key1} and \eqref{key2} are verified by \eqref{klic} and \eqref{klic2}. We define
		$$
		A(z)=A_T(z)\text{ for }z\in T\text{ and }T\in\mathcal{T}. 
		$$ 
		We know that $A$ is a homeomorphism on each $T$, $T\in\mathcal{T}$, by Theorem \ref{obluda} $(iii)$ and moreover it is a homeomorphism on each $Q(z_T,2r)\cap \bigcup_{Q\in\mathcal{Q}} Q$, where $z_T$ is the corresponding vertex of $T\in\mathcal{T}$, as we have discussed at the end of proof of Theorem \ref{obluda} $(iii)$. 
		
		We claim that it is a homeomorphism on the whole $\bigcup_{Q\in\mathcal{Q}} Q$. Assume for contrary that $A(z)=A(w)$ for some 
		$z,w\in \bigcup_{Q\in\mathcal{Q}} Q$, $z\neq w$. We find $z_0$ a vertex of some triangle $T\in\mathcal{T}$ so that $z_0\in G$, $z\in T$ and 
		\eqref{klic} holds for $z_0$. Since $A$ is a homeomorphism on $B(z_0,2r)\cap \bigcup_{Q\in\mathcal{Q}} Q$ we obtain that $w\notin B(z_0,2r)$. 
		From Theorem~\ref{obluda} $(iv)$ and $A(z)=A(w)$ we obtain 
		\eqn{blizko}
		$$
		|f(z)-f(w)|\leq |f(z)-A(z)|+|A(w)-f(w)|\leq 2C_1r\epsilon. 
		$$ 
		For every $v\in\partial B(z_0,2r)$ we obtain from analogy of \eqref{analogy}, \eqref{klic} for $z_0$ and $\epsilon<\frac{\delta^2}{8}$
		$$
		\begin{aligned}
		|f(v)-f(z)|\geq &|Df(z_0)(v-z)|-|f(v)-f(z_0)-Df(z_0)(v-z_0)|- \\
		&-|f(z)-f(z_0)-Df(z_0)(z-z_0)|\\
		\geq &\delta^2 |v-z|-\epsilon(|v-z_0|+|z-z_0|)\\
		\geq &\delta^2 r-\epsilon4r>\frac{\delta^2}{2}r.\\
		\end{aligned}
		$$
		Since $f$ is a homeomorphism and $w\notin B(z_0,2r)$ we obtain now that 
		$$
		|f(z)-f(w)|\geq \inf\{|f(z)-f(v)|:\ v\in \partial B(z_0,2r)\}\geq \frac{\delta^2}{2}r. 
		$$
		This is a contradiction with \eqref{blizko} by our choice of $\epsilon<\frac{\delta^2}{4C_1}$. 
		
		We know that $\Omega$ has bounded measure and triangles in $\mathcal{T}$ have sidelength $r$ and thus $\#\mathcal{T}\leq \frac{C}{r^2}$.
		Now clearly $A\in WBV$ and singular part of second derivative $D^2_sA$ is supported on $\bigcup_{T\in\mathcal{T}}\partial T$ and corresponds to jump of the derivative there. We estimate it with the help of Theorem \ref{obluda} $(ii)$ as 
		\eqn{singular}
		$$
		\begin{aligned}
		\int_{\bigcup_{T\in\mathcal{T}}T}|D^2_sA|&\leq \#\mathcal{T}C\max_{T_1,T_2\in\mathcal{T}}|D^2_sA|(T_1\cap T_2)\\
		&\leq \#\mathcal{T} Cr\epsilon \mathcal{H}^1(\partial T)\\
		&\leq \frac{C}{r^2} Cr\epsilon Cr=C\epsilon. 
		\end{aligned}
		$$ 
		Moreover, the absolutely continuous part $D^2_aA$ satisfies by Theorem \ref{obluda} $(i)$ and \eqref{klic2} (call $v_T$ the corresponding vertex of $T$)
		$$
		\begin{aligned}
		\int_{\bigcup_{T\in\mathcal{T}}T}|D^2_aA-D^2f|&\leq 
		\sum_{T\in\mathcal{T}}\int_T \bigl(|D^2A-D^2f(v_T)|+|D^2 f-D^2f(v_T)|\bigr)\\
		&\leq 
		\sum_{T\in\mathcal{T}}C\epsilon \mir2(T)\leq C\epsilon\mir2(\Omega).\\
		\end{aligned}
		$$
		
		Finally we use Theorem \ref{Dan} for our mapping $A$ to obtain a $C^{\infty}$ diffeomorphism $g$ on $\bigcup_{T\in\mathcal{T}}T$ such that 
		$\|f-g\|_{L^{\infty}}<\nu$ and using \eqref{singular} 
		$$
		\begin{aligned}
		\int_{\bigcup_{T\in\mathcal{T}}T}|D^2f-D^2g|&\leq \int_{\bigcup_{T\in\mathcal{T}}T}\bigl(|D^2f-D^2_aA|+|D^2 g-D^2_aA|\bigr) \\
		&\leq C\epsilon\mir2(\Omega)+\epsilon+C\epsilon\leq C\nu. 
		\end{aligned}
		$$
		It follows that $\|f-g\|_{W^{2,1}(\bigcup_{T\in\mathcal{T}}T,\er^2)}<C\nu$. 
	\end{proof}

\end{document}